\DeclarePairedDelimiter{\floor}{\lfloor}{\rfloor}
\newif\ifshow 
\numberwithin{equation}{section}
\titleformat*{\section}{\large \bfseries}
\titleformat*{\subsection}{\normalsize \bfseries}
\titleformat*{\subsubsection}{\small \bfseries}
\theoremstyle{definition}
\newtheorem{theorem}{Theorem}
\newtheorem{definition}{Definition}
\newtheorem{condition}{Condition}
\newtheorem{lemma}{Lemma}
\newtheorem{example}{Example}
\newtheorem{remark}{Remark}
\newtheorem{corollary}{Corollary}
\newcommand\norm[1]{\left\lVert#1\right\rVert}
    \newcommand{\dto}{\xrightarrow{d}}
    \newcommand{\wto}{\xrightarrow{w}}
    \newcommand{\vto}{\xrightarrow{v}}
    \newcommand{\fidi}{\xrightarrow{\text{fidi}}}
    \newcommand{\rmd}{\mathrm{d}}
\begin{document}
\pagenumbering{roman}

\title{ \Large \textbf{Weak Convergence for Self-Normalized Partial Sum Processes in the Skorokhod $M_1$ Topology with Applications to  Regularly Varying Time Series}}

\author{\textbf{Christis Katsouris}\thanks{Dr. Christis Katsouris received his PhD degree from the School of Economic, Social and Political Sciences, University of Southampton, Southampton  SO17 1BJ, UK. Email Address: \color{blue}{\texttt{christiskatsouris@gmail.com}} } \\ First Draft on arXiv: 03 May 2024 
}

\maketitle
\vspace*{-1.1 em}
\begin{abstract}
In this paper we study the weak convergence of self-normalized partial sum processes in the Skorokhod $M_1$ topology for sequences of random variables which exhibit clustering of large values of the same sign. We show that for stationary regularly varying sequences with such properties, their corresponding properly centered self-normalized partial sums processes converge to a stable L\'evy process. The convergence is established in the space of c\'adl\'ag functions endowed with Skorohod's $M_1$ topology, which is more suitable especially for cases in which the standard $J_1$ topology fails to induce weak convergence of joint stochastic functionals. 
\\


\medskip


\end{abstract}

\newpage 

\begin{small}

\begin{spacing}{1.15}
\tableofcontents
\end{spacing}

\end{small}

\newpage 

\setcounter{page}{1}
\pagenumbering{arabic}

\newpage 

\section{Introduction}
\label{Section1}

Weak convergence theory in the space of bounded functions (see, \cite{kuelbs1968invariance},  \cite{andersen1987central} \cite{hoffmann1991stochastic}, \cite{arcones1996some} and \cite{peterson2011convergence}) is commonly used both in statistical as well as in econometric environments with time series regressions\footnote{Notice that time series sequences are stochastic processes models which are classified into two types: one is for continuous type processes such as diffusion processes and others are for jump processes such as point processes.}; especially when asymptotic theory of model estimators and associated test statistics has to be developed. Moreover, the literature on the weak convergence of self-normalized processes (see, \cite{csorgo2003donsker}) - that belong to the class of diffusion processes based on the standard $J_1$ topology, covers joint functional convergence of functionals for both stationary (see, \cite{kulperger2005high}) as well as nonstationary stochastic processes (see, \cite{buchmann2007asymptotic}). However, there are cases in which weak convergence fails to hold (see, \cite{bucher2014uniform}), such as empirical copula and tail dependence processes that have corresponding limit processes with discontinuous trajectories. In this study, we establish the weak invariance principle for self-normalized partial sum processes of stationary sequences based on the Skorokhod $M_1$ topology (see, \cite{basrak2010functional}, \cite{krizmanic2018joint, krizmanic2020joint, krizmanic2022functional}) which requires some special attention. In this more general setting, weak convergence equipped with the $M_1$ topology covers cases where the $J_1$ topology is not applicable due to discontinuities in trajectories of limit processes and thus permits to establish related asymptotic theory.     

Our fundamental framework for developing the asymptotic theory of self-normalized partial sum processes of heavy tailed time series is based on the main results presented in the seminal study of \cite{basrak2010functional}, BKS, who establish weak convergence in the $M_1$ topology for partial sum process. Specifically, the authors show that the convergence takes place in the space of c\'adl\'ag functions endowed with Skorokhod's $M_1$ topology, while the usual $J_1$ topology is not suitable space to ensure convergence as the partial sum processes may exhibit rapid successions of jumps within temporal clusters of large values, collapsing in the limit to a single jump. However, BKS only consider the case of partial sums and partial maxima sums and there is no related limit theory for self-normalized partial sum processes of stationary regularly varying processes. In this paper, we focus on theoretical and theoretical applications for exactly this case. 

Various studies in the literature focus on developing asymptotic theory for self-normalized sums\footnote{Weak convergence of self-normalized partial-sum processes is also employed in the case of nonstationary time series processes to establish asymptotic approximations. However in this study we only consider strictly stationary sequences with relevant mixing conditions. } of independent random variables under certain regularity conditions (see, \cite{shao1997self}, \cite{shao1999cramer} and \cite{fan2019self}) such as a martingale sequence condition on innovation sequences. More precisely, the asymptotic behaviour of self-normalized estimators and test statistics which have a martingale representation can be obtained using principles such as Cram\'er-type moderate deviations and Berry-Esseen bounds (see, \cite{bentkus1996berry}) and \cite{gao2022refined} who extend to the case of dependent random variables. Fewer studies exist specifically for self-normalized partial sums of heavy tailed processes, with a recent approach presented by \cite{matsui2023self}, although in this framework the authors  do not consider the condition of small vanishing values.

\newpage

\subsection{Preliminary Theory}

The development of invariance principles, are based on a well-defined metric space, such as $d(.,.)$ be the Skorohod metric on $D(0, \infty)$. Since local uniform convergence implies Skorohod convergence, we get that $d \big( X_0^{( \varepsilon) } (\,\cdot\,),  X_0(\,\cdot\,) \big) \to 0$  almost surely as $\varepsilon \to 0$, and hence, since almost sure convergence implies weak convergence, $X_0^{( \varepsilon) } (\,\cdot\,) \Rightarrow X_0(\,\cdot\,)$. 

Consider $\left( X_i \right)_{i \geq 1}$ to denote a sequence of independent random variables with zero means and finite variances. Define with $S_n = \sum_{i=1}^n X_i$, $B_n^2 = \sum_{i=1}^n \mathbb{E} X_i^2$ and $V_n^2 =  \sum_{i=1}^n X_i^2$. Then, the self-normalized sum $S_n / V_n \to \mathcal{N}(0,1)$ if and only if the random variable $X$, is in the domain of attraction of the normal law. In addition, if the self-normalized sums $S_n / V_n, n \in \mathbb{N}$, are stochastically bounded, then they are uniformly sub-Gaussian in the sense that
\begin{align}
\underset{ n \in \mathbb{N} }{ \text{sup} } \ \mathbb{E} \left[ e^{t S_n / V_n }  \right] \leq 2 e^{ct^2} \ \ \text{for all} \ \ t \in \mathbb{R} \ \ \text{and some} \ c < \infty. 
\end{align}  
The normal law property holds when considering the joint weak convergence of partial-sum processes. Let $X_{1,n},...,X_{n,n}, n \geq 1$, be a triangular array of independent infinitesimal random variables. Then the joint functional $( S_n, V_n^2 )$ satisfies the following weak convergence result 
\begin{align}
\left(  \sum_{i=1}^n X_{i,n},  \sum_{i=1}^n X_{n,n}^2 \right) \to_D (U,V), 
\end{align}
for a nondegenerate pair $(U,V)$.

Moreover, it holds that $\sum_{i=1}^n X_{i,n} \big/ \sqrt{ \sum_{i=1}^n X_{i,n}^2 }  \to_{D} \mathcal{N} (0,1)$ \textit{iff} for some $\kappa > 0$, 
\begin{align}
\sum_{i=1}^n X_{i,n} \overset{ d }{ \to } \kappa \mathcal{N}(0,1) \ \ \text{and} \ \  \sum_{i=1}^n X_{i,n}^2 \overset{ p }{ \to } \kappa^2. 
\end{align} 
An extension of the self-normalized central limit theorem to Donsker type functional central limit theorem applies. Define the partial-sum process $S_{[nt]} = \sum_{i=1}^{[nt]} X_i$. The concept of invariance principles was first introduced by the paper of \cite{erdos1946certain} and \cite{donsker1951invariance} as well as generalized by \cite{prokhorov1956convergence} (see, discussion in \cite{kuelbs1968invariance}). Furthermore, \cite{kuelbs1968invariance} consider the weak convergence of these random elements into continuous functionals on the space of real-valued functions that includes Gaussian measures as analogues to Wiener measures. The particular property of such topological spaces  motivated the construction of a new metric such as the approach of \cite{skorokhod1956limit}, who introduced a number of metrics on the space of c\'adlag functions on $[0,1]$, with $J_1$ topology being the most widely used in statistics and econometric asymptotic theory. 

Suppose we consider the asymptotic behaviour of a self-normalized partial-sum process,  as $n \to \infty$ it holds that $S_{[ n t ]} / V_n \to_D W(t)$ on $\left( \mathcal{D}[0,1], \rho \right)$, where $\rho$ is the sup-norm metric for functions in $\mathcal{D}[0,1]$, and $\left\{ W(t), 0 \leq t \leq 1 \right\}$ is a standard Wiener process. The limit result below of \cite{csorgo2003donsker} shows that the behaviour in probability of the self-normalized process $\big\{ S_{\floor{nt} } / V_n \big\}_{ t \in [0,1] }$ coincides with the behaviour of a standard Brownian motion $W = \left\{ W(t) \right\}_{ t \in [0,1] }$.

\newpage

On an appropriate probability space it holds that 
\begin{align}
\underset{ t \in [0,1] }{ \text{sup} } \left| \frac{S_{\floor{nt} }}{V_n} - \frac{W(nt)}{ \sqrt{n} } \right| = o_p(1). 
\end{align} 
In this article, we consider the asymptotics for self-normalized partial-sum processes where joint weak convergence arguments of functionals to the $J_1$ topology do not directly apply. The reason is that this topology is restricted to the space of functions that are continuous when points are approached from a certain direction (see, \cite{bucher2014uniform}). More precisely, two functions are close in the $J_1$ topology only if, up to a small perturbation in the coordinates, their supremum distance is small. This implies that their jumps, if any, must match both in location and in magnitude. Since, without loss of generality a sequence of continuous functions cannot converge to a discontinuous function, we cannot use arguments such as the hypiconvergence.

We establish weak convergence of the joint functional $L_n := ( L_{1n}, L_{2n} )$ based on weak convergence arguments of the corresponding point process of jumps, for partial sum processes of stationary regularly varying sequences in the $M_1$ topology (see, \cite{ledger2016skorokhod}). Using the connection between L\'evy processes and infinitely divisible distributions as discussed in the study of \cite{upadhye2022unified} and \cite{krizmanic2018joint} (see, also \cite{jacod2011discretization}), we demonstrate that these convergence results hold for cases in which the tail index satisfies $\alpha \in (0,2)$. Our main result  in relation to the joint weak convergence is presented in our theorems which show that the particular topological convergence holds for a more general dependence structure than the standard \textit{i.i.d} setting regardless of the number of observations in each cluster.

The rest of the paper is organized as follows.  Section \ref{Section2} discusses the relevant preliminary theory related to regular variation and the forms of weak and strong $M_1$ topologies. Section \ref{Section3} presents the main results of our framework on weak convergence of weak convergence of self-normalized partial-sum processes for the cases in which $\alpha \in (0,1)$ and $\alpha \in [1,2)$. Section \ref{Section4} illustrates theoretical applications of the main results to the construction of self-normalized partial-sum processes for stationary regularly varying sequences. All main proofs can be found in the Appendix of the paper.

\section{Stationary regularly varying sequences}
\label{Section2}

In this section we present related preliminary theory which apply specifically to stationary regularly varying sequences, following \cite{krizmanic2014weak, krizmanic2016functional, krizmanic2017weak} and \cite{basrak2015multivariate}.

\subsection{Regular variation}

Let $\mathbb{E}^{d}=[-\infty, \infty]^{d} \setminus \{ 0 \}$. We equip
$\mathbb{E}^{d}$ with the topology in which a set $B \subset \mathbb{E}^{d}$ has compact closure \textit{if and only if} it is bounded away from zero, if there exists $u > 0$ such that $
B \subset \mathbb{E}^{d}_u = \{ x \in \mathbb{E}^{d} : \|x\| >u \}$. Here $\| \cdot \|$ denotes the max-norm on $\mathbb{R}^{d}$, i.e.\
$\displaystyle \| x \|=\max \{ |x_{i}| : i=1, \ldots , d\}$ where
$x=(x_{1}, \ldots, x_{d}) \in \mathbb{R}^{d}$. Denote by $C_{K}^{+}(\mathbb{E}^{d})$ the class of all non-negative, continuous functions on $\mathbb{E}^{d}$ with compact support.

\newpage

\begin{definition}
A stationary process $(X_{n})_{n \in \mathbb{Z}}$ is \emph{(jointly) regularly varying} with index $\alpha \in (0,\infty)$ if for any nonnegative integer $k$ the
$kd$-dimensional random vector $X = (X_{1}, \ldots , X_{k})$ is multivariate regularly varying with index $\alpha$, i.e.\ there exists a random vector $\Theta$ on the unit sphere $\mathbb{S}^{kd-1} = \{ x \in \mathbb{R}^{kd} : \|x\|=1 \}$ such
that for every $u \in (0,\infty)$ and as $x \to \infty$,
\begin{equation}\label{e:regvar1}
\frac{ \mathbb{P} (\|X\| > ux,\,X / \| X \| \in \cdot \, )}{ \mathbb{P}(\| X \| >x)}
\wto u^{-\alpha} \mathbb{P} ( \Theta \in \cdot \,),
\end{equation}
\end{definition}

Then, regular variation can be expressed in terms of vague convergence of
measures on $\mathbb{E}$
\begin{equation}
\label{e:onedimregvar}
n \mathbb{P} ( a_n^{-1} X_i \in \cdot \, ) \vto \mu( \, \cdot \,),
\end{equation}
where $(a_{n})_{ n \in \mathbb{N} }$ is a sequence of positive real numbers such that the following condition holds
\begin{equation}
\label{e:niz}
n \mathbb{P} ( |X_{1}| > a_{n}) \to 1
\end{equation}
as $n \to \infty$, and $\mu$ is a nonzero Radon measure on $\mathbb{E}$ given by
\begin{equation}
\label{e:mu}
\mu(\rmd x) = \bigl( p \, 1_{(0, \infty)}(x) + q \, 1_{(-\infty, 0)}(x) \bigr) \, \alpha |x|^{-\alpha-1}\,\rmd x,
\end{equation}
for some $p \in [0,1]$, with $q=1-p$.

Specifically, Theorem 2.1 in \cite{basrak2009regularly}  provides a convenient characterization of joint regular variation:~it is necessary and sufficient that there exists a process $(Y_n)_{n \in \mathbb{Z}}$
with $\Pr(|Y_0| > y) = y^{-\alpha}$ for $y \geq 1$ such that, as $x \to \infty$,
\begin{equation}
\label{e:tailprocess}
\bigl( (x^{-1}\ X_n)_{n \in \mathbb{Z}} \, \big| \, | X_0| > x \bigr)
\fidi (Y_n)_{n \in \mathbb{Z}},
\end{equation}
where "$\fidi$" denotes convergence of finite-dimensional
distributions. The process $(Y_{n})$ is called
the \emph{tail process} of $(X_{n})_{ n \in \mathbb{N} }$.

\subsection{Point processes and dependence conditions}
\label{s:pp}

Let $(X_{n})_{ n \in \mathbb{N} }$ be a stationary sequence of random variables and assume it is jointly regularly varying with index $\alpha >0$. Let $(Y_{n})_{ n \in \mathbb{Z} }$ be the tail process of $(X_{n})_{ n \in \mathbb{Z} }$. Suppose that the stochastic process $N_n$ is defined as below
\begin{equation*}
\label{E:ppspacetime}
N_{n} = \sum_{i=1}^{n} \delta_{(i / n,\,X_{i} / a_{n})} \qquad \textrm{for all} \ n\in \mathbb{N},
\end{equation*}
where the norming sequence $\left\{ a_{n} \right\}$ is defined such that the condition of expression (\ref{e:niz}) holds. 

The point process convergence for the sequence $(N_{n})$ has been established by \cite{basrak2010functional} on the space $[0,1] \times \mathbb{E}_{u}$ for any threshold $u>0$, with the limit depending on the threshold. A useful convergence result for $N_{n}$ without the restrictions to various domains was obtained by \cite{basrak2016complete}.  Appropriate weak dependence conditions for this convergence result are given below. Both Condition \ref{cond1} and Condition \ref{c:mixcond2} provide necessary regularity assumptions.

\newpage

\begin{condition}
\label{cond1}
There exists a sequence of positive integers $(r_{n})$ such that $r_{n} \to \infty $ and $r_{n} / n \to 0$ as $n \to \infty$ and such that for every $f \in C_{K}^{+}([0,1] \times \mathbb{E})$, denoting $k_{n} = \lfloor n / r_{n} \rfloor$, as $n \to \infty$,
\begin{equation}
\label{e:mixcon}
\mathbf{E} \biggl[ \exp \biggl\{ - \sum_{i=1}^{n} f \biggl(\frac{i}{n}, \frac{X_{i}}{a_{n}}
\biggr) \biggr\} \biggr] - \prod_{k=1}^{k_{n}} \mathbf{E} \biggl[ \exp \biggl\{ - \sum_{i=1}^{r_{n}} f \biggl(\frac{kr_{n}}{n}, \frac{X_{i}}{a_{n}} \biggr) \biggr\} \biggr] \to 0.
\end{equation}
\end{condition}

\begin{condition}
\label{c:mixcond2}
There exists a sequence of positive integers $(r_{n})$ such that $r_{n} \to \infty $ and $r_{n} / n \to 0$ as $n \to \infty$ and such that for every $u > 0$,
\begin{equation}
\label{e:anticluster}
\lim_{m \to \infty} \limsup_{n \to \infty}
\mathbb{P} \biggl( \max_{m \leq |i| \leq r_{n}} | X_{i} | > ua_{n}\,\bigg|\,| X_{0}|>ua_{n} \biggr) = 0.
\end{equation}
\end{condition}

\begin{remark}
Condition~\ref{cond1} is implied by the strong mixing property (see \cite{krizmanic2016functional}), with the sequence $(\xi_{n})_{ n \in \mathbb{N} }$ being a strongly mixing sequence if $\alpha (n) \to 0$ as $n \to \infty$, where
\begin{align*}
\alpha (n) = \sup \bigg\{  \big| \mathbb{P} (A \cap B) - \mathbb{P}(A) \mathbb{P}(B) \big| : A \in \mathcal{F}_{-\infty}^{0}, B \in \mathcal{F}_{n}^{\infty} \bigg\}    
\end{align*}
and $\mathcal{F}_{k}^{l} = \sigma( \{ \xi_{i} : k \leq i \leq l \} )$ for $-\infty \leq k \leq l \leq \infty$  such that $\alpha (n)$ represents the mixing coefficient.
\end{remark}

For a detailed discussion on joint regular variation and the dependence properties of Condition~\ref{cond1} and Condition \ref{c:mixcond2} we refer to Section 3.4 in \cite{basrak2010functional}. These conditions are applicable to various time series regression settings which include among others, moving averages processes\footnote{Further applications presented in the literature which could be extended based on the regularity conditions we provide here include ARMAX processes (see, \cite{ferreira2013extremes}) as well as residual marked empirical processes of the form $\alpha_n(x) = \sum_{i=1}^n \mathsf{g} \left( X_{i-1}/ a_n \right) \left( \mathbf{1} \left\{ \epsilon_i \leq x \right\} - F(x) \right)$ (see, \cite{chan2013marked}), which are used for constructing goodness-of-fit tests.}, stochastic volatility models as well as GARCH-type processes (see, Section 4 in \cite{basrak2010functional}). Notice that Condition~\ref{cond1} presented above is as Condition 2.2. in \cite{krizmanic2014weak}, Condition 2.1 in \cite{krizmanic2016functional} and Condition 2.1 in \cite{krizmanic2017weak}.

\medskip

In order to present the main weak convergence results for the pair $( L_{1n}, L_{2n} )$ we need to define some relevant terms used in our analysis of random regularly varying sequences. By Proposition 4.2 in \cite{basrak2016complete}, under Condition~\ref{c:mixcond2} the following result holds
\begin{eqnarray}
\label{E:theta:spectral}
\theta := 
\mathbb{P} \left( \underset{ i \geq 1 }{ \mathsf{sup} }| Y_{i}| \le 1 \right) 
= 
\mathbb{P} \left( \underset{ i \leq - 1 }{ \mathsf{sup} }  | Y_{i}| \le 1 \right) > 0,
\end{eqnarray}
where $\theta$ denotes the extremal index of the univariate sequence $(| X_{n} |)_{ n \in \mathbb{N} }$. 

Under the assumption of joint regular variation and Condition~\ref{cond1} and \ref{c:mixcond2}, by Theorem 3.1 in Basrak and Tafro~\cite{basrak2016complete}, the following convergence in distribution holds, 
\begin{equation}
\label{e:BaTa}
N_{n} \dto N := \sum_{i}\sum_{j}\delta_{(T_{i}, P_{i}\eta_{ij})} \in [0,1] \times \mathbb{E} \ \ \ \text{as} \ n \to \infty, 
\end{equation}
where $\sum_{i=1}^{\infty}\delta_{(T_{i}, P_{i})}$ is a Poisson process on $[0,1] \times (0,\infty)$ with intensity measure $Leb \times \nu$.

\newpage

Moreover, $\nu(\rmd x) = \theta \alpha x^{-\alpha-1} \boldsymbol{1}_{(0,\infty)}(x)\,\rmd x$ and $(\sum_{j=1}^{\infty} \delta_{\eta_{ij}})_{i}$ is an i.i.d.~sequence of point processes in $\mathbb{E}$ independent of $\sum_{i} \delta_{(T_{i}, P_{i})}$ with common distribution equal to the distribution of $\sum_{j}\delta_{Z_{j} / L_{Z} }$, where $L_{Z}= \sup_{j \in \mathbb{Z}}|Z_{j}|$ and $\sum_{j}\delta_{Z_{j}}$ is distributed as $( \sum_{j \in \mathbb{Z}} \delta_{Y_j} \,|\, \sup_{i \le -1} | Y_i| \le 1)$. 

\begin{corollary}(\cite{davis1995point})
\label{Corollary1}
Let $\sum_{j}\delta_{\eta_{j}}$ be a point process with a limit distribution equal to the distribution of $\sum_{j}\delta_{\eta_{1j}}$. Then, for $ \alpha \leq 1$ it holds that
\begin{equation}
\label{e:MW1}
\mathbf{E} \Big( \sum_{j}|\eta_{j}| \Big)^{\alpha} < \infty.
\end{equation}
\end{corollary}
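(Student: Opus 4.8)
The plan is to exploit the subadditivity of $t\mapsto t^{\alpha}$ on $[0,\infty)$ for $\alpha\in(0,1]$ and then to translate the statement, through the tail process, into a moment bound on the spectral tail process. Since $\bigl(\sum_{j}|\eta_{j}|\bigr)^{\alpha}\le\sum_{j}|\eta_{j}|^{\alpha}$ whenever $\alpha\le1$, it suffices to prove $\mathbf{E}\sum_{j}|\eta_{j}|^{\alpha}<\infty$. Using that $\sum_{j}\delta_{\eta_{j}}$ has the law of $\sum_{j}\delta_{Z_{j}/L_{Z}}$ with $L_{Z}=\sup_{j}|Z_{j}|$, that $\sum_{j}\delta_{Z_{j}}\eqd\bigl(\sum_{j\in\mathbb{Z}}\delta_{Y_{j}}\,\big|\,\sup_{i\le-1}|Y_{i}|\le1\bigr)$, and that $\theta=\mathbb{P}(\sup_{i\le-1}|Y_{i}|\le1)>0$ by \eqref{E:theta:spectral}, this can be rewritten as
\begin{equation*}
\mathbf{E}\sum_{j}|\eta_{j}|^{\alpha}
=\frac{1}{\theta}\,\mathbf{E}\!\left[\frac{\sum_{j\in\mathbb{Z}}|Y_{j}|^{\alpha}}{\bigl(\sup_{j\in\mathbb{Z}}|Y_{j}|\bigr)^{\alpha}}\,\mathbf{1}\!\left\{\sup_{i\le-1}|Y_{i}|\le1\right\}\right].
\end{equation*}

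Next I would peel off the radial part of the tail process. Writing $Y_{j}=|Y_{0}|\,\Theta_{j}$, where $|Y_{0}|$ is standard Pareto with index $\alpha$, independent of the spectral tail process $(\Theta_{j})_{j}$ and with $|\Theta_{0}|=1$: the ratio inside the expectation is scale invariant and equals $\sum_{j}|\Theta_{j}|^{\alpha}/(\sup_{j}|\Theta_{j}|)^{\alpha}$, while the conditioning event becomes $\{|Y_{0}|\le1/\sup_{i\le-1}|\Theta_{i}|\}$. Integrating out $|Y_{0}|$, the surviving conditional probability is at most $\mathbf{1}\{\sup_{i\le-1}|\Theta_{i}|\le1\}$, and on that event $\sup_{j\in\mathbb{Z}}|\Theta_{j}|=\sup_{j\ge0}|\Theta_{j}|\ge|\Theta_{0}|=1$; hence the whole claim reduces to
\begin{equation*}
\mathbf{E}\!\left[\sum_{j\in\mathbb{Z}}|\Theta_{j}|^{\alpha}\,\mathbf{1}\!\left\{\sup_{i\le-1}|\Theta_{i}|\le1\right\}\right]<\infty ,
\end{equation*}
that is, to finiteness of the $\alpha$-th moment of the total mass of the limiting cluster process. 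The indicator here is essential — $\mathbf{E}\sum_{j}|\Theta_{j}|^{\alpha}$ can be infinite, already for infinite-order moving averages — so the argument genuinely uses the ``anchoring'' event $\{\sup_{i\le-1}|\Theta_{i}|\le1\}$.

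This last finiteness is the crux and the step I expect to be the main obstacle, because a cluster may \emph{a priori} carry infinitely many atoms, so the pointwise decay $|\Theta_{j}|\to0$ as $|j|\to\infty$ forced by Condition~\ref{c:mixcond2} does not by itself make the $\alpha$-th powers summable in mean. I would obtain it following \cite{davis1995point}: by the point process convergence \eqref{e:BaTa} the limiting cluster is the distributional limit of the within-block configurations of $(X_{i}/a_{n})_{i}$ conditioned on a high-level exceedance, and Condition~\ref{c:mixcond2} guarantees that asymptotically each block contributes just one such cluster; combining this with a truncation (cap the block $\alpha$-sums at a level, pass to the limit by weak convergence, then remove the cap by monotone convergence) reduces matters to a uniform-in-$n$ bound on truncated block moments, which is supplied by Karamata's theorem and the regular variation of $(X_{i})$ together with $r_{n}/n\to0$. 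Alternatively, the time-change identity for the spectral tail process re-expresses the displayed expectation in terms of the leftward record times of $(|\Theta_{j}|)_{j}$, whose number is controlled by the two-sided decay of the tail process that Condition~\ref{c:mixcond2} provides. Either way one concludes $\mathbf{E}\sum_{j}|\eta_{j}|^{\alpha}<\infty$, and hence $\mathbf{E}\bigl(\sum_{j}|\eta_{j}|\bigr)^{\alpha}<\infty$.
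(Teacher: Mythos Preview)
The paper does not prove this corollary at all: it is stated as a cited result from \cite{davis1995point}, and in the proof of Theorem~\ref{t:functconvergence} the bound \eqref{e:MW1} is simply invoked with a pointer to that reference. So there is no argument in the paper to compare your sketch against.

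On its own merits, your reduction is correct through the point you yourself flag as the crux. Subadditivity of $t\mapsto t^{\alpha}$ for $\alpha\le1$, the identification of $\sum_{j}\delta_{\eta_{j}}$ with $\sum_{j}\delta_{Z_{j}/L_{Z}}$, the scale invariance after writing $Y_{j}=|Y_{0}|\Theta_{j}$, and the bound $\sup_{j}|\Theta_{j}|\ge|\Theta_{0}|=1$ on the anchoring event are all valid, and they do reduce the claim to finiteness of $\mathbf{E}\bigl[\sum_{j}|\Theta_{j}|^{\alpha}\,\mathbf{1}\{\sup_{i\le-1}|\Theta_{i}|\le1\}\bigr]$.

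Where your outline falls short of a proof is precisely at that last step. The first route you sketch (truncate block $\alpha$-sums, pass to the limit by weak convergence, remove the cap) has a genuine gap: weak convergence of point processes gives convergence in distribution of counts, not of their expectations, so you would need a separate uniform-integrability argument for the block sums, and you have not indicated how Condition~\ref{c:mixcond2} or regular variation supplies it. The second route via ``leftward record times'' is too vague to assess. What Davis and Hsing actually exploit is that the mean measure of the limiting cluster process is forced, by consistency with the one-dimensional regular variation of $X_{1}$, to satisfy $\theta\,\mathbf{E}\sum_{j}|\eta_{j}|^{\alpha}=1$; once that identity is available your subadditivity step finishes the job immediately. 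Establishing the identity rigorously is where the real work in \cite{davis1995point} lies, and your sketch does not yet reproduce it.
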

\begin{remark}
The finite moment condition given by Corollary \ref{Corollary1} provides a necessary condition for the weak convergence of point processes into a stable distribution when $\alpha \leq 1$. Although  the particular finiteness of moments condition of point processes might not hold for $\alpha > 1$ (see \cite{mikosch2014cluster}). 
\end{remark}

\subsection{The weak and strong $M_{1}$ topologies}
\label{ss:j1m1}

In this section, we introduce the definition of the Skorokhod weak $M_{1}$ topology in a general space $D([0,1], \mathbb{R}^{d})$ of $\mathbb{R}^{d}$--valued c\`{a}dl\`{a}g functions on the closed set $[0,1]$. The definitions presented below  can be found in Section 3 of \cite{basrak2015multivariate} and in Section 2.3 of  \cite{krizmanic2017weak}.

\begin{definition}[Completed graph] 
For $x \in D([0,1], \mathbb{R}^{d})$ the completed (thick) graph of $x$ is the set 
\begin{align}
G_{x}  = \big\{ (t,z) \in [0,1] \times \mathbb{R}^{d} : z \in [[x(t-), x(t)]] \big\},    
\end{align}
where $x(t-)$ is the left limit of $x$ at $t$ and $[[a,b]]$ is the product segment, such that \begin{align}
[[a,b]]=[a_{1},b_{1}] \times [a_{2},b_{2}] \ldots \times [a_{d},b_{d}]   
\end{align}
where $a=(a_{1}, a_{2}, \ldots, a_{d})$ and $b=(b_{1}, b_{2}, \ldots, b_{d}) \in
\mathbb{R}^{d}$.
\end{definition}

\begin{definition}[Order of graph]
An order on the graph $G_{x}$ is defined by assuming that $(t_{1},z_{1}) \le
(t_{2},z_{2})$ if either (i) $t_{1} < t_{2}$ or (ii) $t_{1} = t_{2}$
and $|x_{j}(t_{1}-) - z_{1j}| \le |x_{j}(t_{2}-) - z_{2j}|$, which holds 
for all $j=1, 2, \ldots, d$. 
\end{definition}

\begin{definition}[Partial Order of graph]
The relation $\preceq$ induces only a partial order on the graph $G_{x}$. A weak parametric representation of the graph $G_{x}$ is a continuous non-decreasing function $(r,u)$ mapping $[0,1]$ into $G_{x}$, with $r \in C([0,1],[0,1])$ being the time component and $u \in C([0,1], \mathbb{R}^{d})$ being the spatial component, such that $r(0)=0, r(1)=1$ and $u(1)=x(1)$.
\end{definition}

\begin{definition}
Let $\Pi_{w}(x)$ denote the set of weak parametric representations of the graph $G_{x}$. For $x_{1},x_{2} \in D([0,1], \mathbb{R}^{d})$ define
\begin{align}
d_{w}(x_{1},x_{2})
= \inf \big\{ \|r_{1}-r_{2}\|_{[0,1]} \vee \|u_{1}-u_{2}\|_{[0,1]} : (r_{i},u_{i}) \in \Pi_{w}(x_{i}), i=1,2 \big\},
\end{align}
where $\|x\|_{[0,1]} = \sup \{ \|x(t)\| : t \in [0,1] \}$. 
\end{definition}

\newpage

\begin{definition}(Weak $M_{1}$ topology)
We say that $x_{n} \to x$ in $D([0,1], \mathbb{R}^{d})$ for a sequence
$(x_{n})_{ n \in \mathbb{R} }$ in the weak Skorokhod $M_{1}$ topology if $d_{w}(x_{n},x)\to 0$ as $n \to \infty$.
\end{definition}

\begin{definition}(Strong $M_{1}$ topology)
For $x \in D([0,1], \mathbb{R}^{d})$ we let
\begin{align}
\Gamma_{x}  = \big\{ (t,z) \in [0,1] \times \mathbb{R}^{d} : z \in [x(t-), x(t)] \big\},
\end{align}
where $[a,b] = \{  \lambda a + (1-\lambda)b : 0 \leq \lambda \leq 1 \}$ for $a, b \in \mathbb{R}^{d}$. We say that $(r,u)$ is a strong parametric representation of $\Gamma_{x}$ if it is a continuous non-decreasing function mapping $[0,1]$ onto $\Gamma_{x}$. 

\medskip

Denote by $\Pi(x)$ the set of all strong parametric representations of the completed (thin) graph $\Gamma_{x}$. Then for $x_{1},x_{2} \in D([0,1], \mathbb{R}^{d})$ denote with
\begin{align}
d_{M_{1}}(x_{1},x_{2})
= \inf \big\{ \|r_{1}-r_{2}\|_{[0,1]} \vee \|u_{1}-u_{2}\|_{[0,1]} : (r_{i},u_{i}) \in \Pi(x_{i}), i=1,2 \big\}.
\end{align}
Then, we say that the distance function $d_{M_{1}}(x_{1},x_{2})$ is a metric on $D([0,1], \mathbb{R}^{d})$, and the induced topology is called the (standard or strong) Skorokhod $M_{1}$ topology.
\end{definition}

\begin{remark}
Notice that the $WM_{1}$ topology is weaker than the standard $M_{1}$ topology on $D([0,1], \mathbb{R}^{d})$, but they coincide in the case that $d=1$. Furthermore, the $WM_{1}$ topology coincides with the topology induced by the metric
\begin{equation}
\label{e:defdp}
 d_{p}(x_{1},x_{2})= \max \big\{ d_{M_{1}}(x_{1j},x_{2j}) : j=1,2, \ldots, d \big\}
\end{equation}
for $x_{i}=(x_{i1}, x_{i2}, \ldots, x_{id}) \in D([0,1], \mathbb{R}^{d})$ and $i=1,2$. Then, we say that the metric $d_{p}(x_{1},x_{2})$ induces the product topology on $D([0,1], \mathbb{R}^{d})$. For detailed discussion of the strong and weak $M_{1}$ topologies we refer to sections 12.3--12.5 in \cite{Whitt2002stochastic}.
\end{remark}

Next, we use the above definitions in order to introduce the induced measurability results. Denote by $C^{\uparrow}_{0}([0,1], \mathbb{R})$ the subset of functions $x$ in the space $D ( [0,1], \mathbb{R})$ that are continuous and nondecreasing which satisfy $x(0)>0$. Therefore, we assume that $C^{\uparrow}_{0}([0,1], \mathbb{R})$ is a measurable subset of $D( [0,1], \mathbb{R} )$ with the Borel $\sigma$--fields associated with the Skorokhod $J_{1}$ and $M_{1}$ topologies, which coincide with the Kolmogorov $\sigma$--field generated by the projection maps (see Theorem 11.5.2 and Theorem 11.5.3 in \cite{Whitt2002stochastic}).

\begin{remark}
We only consider weak convergence of partial-sum processes in the subset $C^{\uparrow}_{n}([0,1], \mathbb{R})$. Notice that the $J_1$ topology corresponds to a closed set while the standard $M_1$ topology corresponds to an open set, which it is still assumed to be a measurable set. As a result, the set $C^{\uparrow}_{n}([0,1], \mathbb{R})$ corresponds to the subset of c\`{a}dl\`{a}g functions which corresponds to continuous, positive and non-decreasing functions, which ensures the measurability conditions. 
\end{remark}

\newpage 

\begin{lemma}
\label{Lemma1}
The set $C^{\uparrow}_{0}([0,1], \mathbb{R})$ is a measurable subset of $D[0,1], \mathbb{R})$ with the $M_{1}$ topology.
\end{lemma}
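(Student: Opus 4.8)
The plan is to exhibit $C^{\uparrow}_{0}([0,1],\mathbb{R})$ as a countable combination of sets already known to be measurable with respect to the Kolmogorov $\sigma$-field generated by the projections $\pi_{t}\colon x\mapsto x(t)$, which by Theorem 11.5.2 in \cite{Whitt2002stochastic} coincides with the Borel $\sigma$-field of the $M_{1}$ topology on $D([0,1],\mathbb{R})$. Concretely, a c\`adl\`ag function $x$ lies in $C^{\uparrow}_{0}([0,1],\mathbb{R})$ precisely when three conditions hold: (i) $x$ is nondecreasing; (ii) $x$ is continuous; (iii) $x(0)>0$. I would write each of these as an event defined through countably many projection maps and rational arguments.

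First, for monotonicity, note that a c\`adl\`ag function is nondecreasing if and only if $x(s)\le x(t)$ for all rationals $0\le s\le t\le 1$ (right-continuity upgrades the rational inequalities to all reals, and the values at the finitely-or-countably many jump points are controlled by one-sided limits along rationals). Hence
\begin{equation}
\label{e:mono}
A_{1} = \bigcap_{\substack{s,t\in\mathbb{Q}\cap[0,1]\\ s\le t}} \big\{ x : \pi_{s}(x) \le \pi_{t}(x) \big\},
\end{equation}
a countable intersection of measurable sets, hence measurable. Second, for continuity, I would use that a nondecreasing c\`adl\`ag function $x$ is continuous iff it has no jumps, and for monotone functions the jump at $t$ equals $x(t)-x(t-)=\lim_{s\uparrow t}(x(t)-x(s))$; so continuity on $A_{1}$ is equivalent to $\sup_{s<t,\,s,t\in\mathbb{Q}}\big(x(t)-x(s)-\tfrac1k\big)\le 0$ failing to be violated uniformly, which one can encode as
\begin{equation}
\label{e:cont}
A_{2} = \bigcap_{k\ge 1}\ \bigcap_{t\in\mathbb{Q}\cap[0,1]}\ \bigcup_{\substack{s\in\mathbb{Q}\cap[0,1]\\ s<t}} \big\{ x : \pi_{t}(x) - \pi_{s}(x) < \tfrac{1}{k} \big\} \ \cap\ (\text{analogous right-hand version}),
\end{equation}
again a countable Boolean combination of projection events. (On the complement of $A_{1}$ the intersection $A_{1}\cap A_{2}$ is what matters, so I only need the characterization to be correct on monotone functions.) Third, $A_{3}=\{x:\pi_{0}(x)>0\}$ is a single projection event. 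Then $C^{\uparrow}_{0}([0,1],\mathbb{R}) = A_{1}\cap A_{2}\cap A_{3}$, a measurable set.

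The main obstacle is getting the continuity encoding in \eqref{e:cont} exactly right so that it picks out continuous functions and no others within the class of monotone c\`adl\`ag functions — in particular making sure the countable family of rational times genuinely detects every jump, which relies on the monotonicity already secured by $A_{1}$ (for a general c\`adl\`ag function, oscillation rather than a clean one-sided jump could in principle hide between rationals, but monotonicity rules this out). A cleaner alternative I would mention is to invoke the known fact (Whitt, Section 12.9, or the measurability discussion in \cite{basrak2015multivariate}) that the subset of continuous functions $C([0,1],\mathbb{R})$ is closed, hence Borel measurable, in $(D([0,1],\mathbb{R}),M_{1})$; then $C^{\uparrow}_{0}=C([0,1],\mathbb{R})\cap A_{1}\cap A_{3}$ immediately, reducing everything to the elementary measurability of $A_{1}$ and $A_{3}$ via projections. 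Either route closes the argument.
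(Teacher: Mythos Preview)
Your projection-based strategy is sound in spirit, but both concrete routes you offer have gaps.

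In the first route, your set $A_{2}$ quantifies only over rational $t$, and this fails to detect jumps at irrational times even for monotone functions: if $x\in A_{1}$ has a single jump at an irrational $\tau$, then for every rational $t>\tau$ you can pick a rational $s\in(\tau,t)$ making $x(t)-x(s)$ arbitrarily small, so $x$ satisfies your condition without being continuous. Monotonicity does not rescue this. A correct projection encoding does exist---for instance, on $A_{1}$ continuity is equivalent to uniform continuity, which one can write as $\bigcap_{k\ge 1}\bigcup_{n\ge 1}\bigcap_{i=1}^{n}\{x:x(i/n)-x((i-1)/n)<1/k\}$---but it is not the one you wrote down.

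In the second route, the claim that $C([0,1],\mathbb{R})$ is \emph{closed} in $(D,M_{1})$ is false: continuous piecewise-linear approximants to a step function converge to that step function in $M_{1}$, which is precisely the feature distinguishing $M_{1}$ from $J_{1}$. What is true is that $C([0,1],\mathbb{R})$ is closed in $J_{1}$, and since the $J_{1}$ and $M_{1}$ Borel $\sigma$-fields both coincide with the Kolmogorov $\sigma$-field, $C$ is $M_{1}$-Borel. This corrected version is essentially the paper's own argument: the paper writes $C^{\uparrow}_{0}=\bigcup_{k\ge 1}C^{\uparrow}_{k}$ with $C^{\uparrow}_{k}=\{x\in C^{\uparrow}_{0}:x(0)\ge 1/k\}$, shows each $C^{\uparrow}_{k}$ is $J_{1}$-closed (a $J_{1}$-limit of continuous functions must be continuous, the convergence is then uniform, and uniform convergence preserves monotonicity and the bound $x(0)\ge 1/k$), and then transfers measurability to $M_{1}$ via the $\sigma$-field coincidence. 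So once you replace ``closed in $M_{1}$'' by ``closed in $J_{1}$, hence $M_{1}$-Borel'', your alternative route becomes the paper's proof.
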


\begin{proof}
The proof of \ref{Lemma1} can be found in the Appendix.    
\end{proof}

Consequently, to facilitate the development of limit theory for the proposed framework, we introduce the concept of $M_{1}$ continuity of multiplication and division of two c\`{a}dl\`{a}g functions which are well-established results in the literature. Specifically, the first result presented in Lemma \ref{l:contmultpl} is based on Theorem 13.3.2 of  \cite{Whitt2002stochastic}, while the second result presented in Lemma \ref{l:M1div} is based on the fact that for monotone functions $M_{1}$ convergence is equivalent to point-wise convergence in a dense subset of $[0,1]$ including $0$ and $1$ (see \cite{Whitt2002stochastic}, Corollary 12.5.1). 

Denote by $\textrm{Disc}(x)$ the set of discontinuity points of $x \in D([0,1], \mathbb{R})$.

\begin{lemma}
\label{l:contmultpl}
Suppose that $x_{n} \to x$ and $y_{n} \to y$ in $D([0,1], \mathbb{R})$ with the $M_{1}$ topology. If it holds that for each $t \in \textrm{Disc}(x) \cap \textrm{Disc}(y)$, $x(t)$, $x(t-)$, $y(t)$ and $y(t-)$ are all nonnegative and $[x(t)-x(t-)][y(t)-y(t-)] \geq 0$, then 
\begin{align}
x_{n}y_{n} \to xy, \ \ \  \in D([0,1], \mathbb{R}) 
\end{align}
with the $M_{1}$ topology, where $(xy)(t) = x(t)y(t)$ for $t \in [0,1]$.
\end{lemma}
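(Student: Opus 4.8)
The plan is to deduce Lemma~\ref{l:contmultpl} directly from the known continuity of the multiplication map on $D([0,1],\mathbb{R}^2)\to D([0,1],\mathbb{R})$ in the $M_1$ topology, namely Theorem 13.3.2 of \cite{Whitt2002stochastic}. First I would note that the scalar product map $(a,b)\mapsto ab$ from $\mathbb{R}^2$ to $\mathbb{R}$ is being applied coordinatewise to the pair process $(x_n,y_n)$, so the statement reduces to: (i) upgrading the two separate $M_1$ convergences $x_n\to x$ and $y_n\to y$ in $D([0,1],\mathbb{R})$ to joint convergence $(x_n,y_n)\to(x,y)$ in $D([0,1],\mathbb{R}^2)$ with the \emph{strong} $M_1$ topology; and (ii) checking that $(x,y)$ lies in the continuity set of the multiplication map, which is exactly where the hypotheses on the signs of the jumps at common discontinuities enter.

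For step (i), the subtlety is that marginal $M_1$ convergence does \emph{not} in general imply joint $M_1$ convergence in $D([0,1],\mathbb{R}^2)$ — that is precisely why the weak $M_1$ topology $WM_1$ (the product topology $d_p$ of the Remark) exists as a strictly coarser object when $d\geq 2$. The standard device, which I would invoke, is that joint convergence holds provided $x$ and $y$ have no common discontinuity points except where their jumps are "aligned" in a suitable sense; under the hypothesis $[x(t)-x(t-)][y(t)-y(t-)]\geq 0$ at each $t\in\mathrm{Disc}(x)\cap\mathrm{Disc}(y)$, the jumps of $x$ and $y$ point in the same direction, so a single parametric representation can be chosen that traverses both completed graphs monotonically and simultaneously. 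Concretely I would take parametric representations $(r^x_n,u^x_n)$ and $(r^y_n,u^y_n)$ realizing the marginal convergences, and argue that they can be reparametrized to a common time scale; the sign condition guarantees that the linear segments $[[\,(x(t-),y(t-)),(x(t),y(t))\,]]$ traced in the limit are consistent with a nondecreasing parametrization of the planar completed graph $G_{(x,y)}$. This is the main obstacle: making the simultaneous reparametrization rigorous, and it is exactly the content one would cite from \cite{Whitt2002stochastic} (the results on addition/composition being $M_1$-continuous at limits with well-behaved common jumps); I would lean on Theorem 12.6.2 and the discussion of $M_1$-continuity of $(x_1,x_2)\mapsto(x_1,x_2)$ from marginals there rather than reprove it.

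For step (ii), once $(x_n,y_n)\to(x,y)$ in the strong $M_1$ topology on $D([0,1],\mathbb{R}^2)$, I would invoke Theorem 13.3.2 of \cite{Whitt2002stochastic}, which states that the product map is continuous at every $(x,y)$ such that at each common discontinuity point the one-sided values are such that the planar segment maps to a segment under multiplication — this is guaranteed when $x(t),x(t-),y(t),y(t-)\geq 0$ and the jumps have the same sign, because then $s\mapsto (\lambda_x(s)x(t)+(1-\lambda_x(s))x(t-))(\lambda_y(s)y(t)+(1-\lambda_y(s))y(t-))$ is monotone in $s$ along the common parametrization, so the image of the completed graph under pointwise multiplication is again a monotone planar curve, i.e. lies in $G_{xy}$. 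Hence $x_ny_n\to xy$ in $D([0,1],\mathbb{R})$ with the $M_1$ topology.

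Finally I would record the elementary verification that the limit function $xy$ defined by $(xy)(t)=x(t)y(t)$ is itself càdlàg (products of càdlàg functions are càdlàg, with $(xy)(t-)=x(t-)y(t-)$), so the statement is well-posed, and remark that the nonnegativity hypotheses cannot be dropped: if the jumps of $x$ and $y$ had opposite signs, the product could oscillate across the clustered jump and the limit of $x_ny_n$ would fail to be monotone on the segment, breaking $M_1$ convergence — which is the same phenomenon that makes $J_1$ inadequate in the first place. I expect the only genuinely delicate point to be step (i); step (ii) is a direct citation once the joint convergence is in hand.
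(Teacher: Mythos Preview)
The paper does not actually prove this lemma: immediately before stating it, the text says ``the first result presented in Lemma~\ref{l:contmultpl} is based on Theorem 13.3.2 of \cite{Whitt2002stochastic}'', and no proof appears in the appendix. In other words, the lemma \emph{is} Whitt's Theorem 13.3.2, quoted verbatim, and the intended ``proof'' is a bare citation.

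Your proposal correctly identifies Theorem 13.3.2 as the source, so in that sense you match the paper. However, you introduce an unnecessary and potentially problematic detour in step (i). Whitt's Theorem 13.3.2 is stated with exactly the hypotheses of the lemma: separate $M_1$ convergences $x_n\to x$ and $y_n\to y$ in $D([0,1],\mathbb{R})$ --- equivalently, $(x_n,y_n)\to(x,y)$ in the weak $M_1$ (product) topology on $D([0,1],\mathbb{R}^2)$. It does \emph{not} require joint strong $M_1$ convergence of the pair. Your attempt to upgrade marginal $M_1$ to joint strong $M_1$ under the sign condition is therefore not needed, and as you yourself note, is the ``genuinely delicate point'' --- in fact such an upgrade is not generally available, which is precisely why Whitt formulates the multiplication theorem from the $WM_1$ side. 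So your step (ii) alone, read as a direct invocation of Theorem 13.3.2 from the marginal hypotheses, is the entire argument; step (i) should be dropped.
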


\begin{lemma}
\label{l:M1div}
Consider the function $h \colon D([0,1], \mathbb{R}) \times C^{\uparrow}_{0}([0,1], \mathbb{R}) \to D([0,1], \mathbb{R})$ defined by $h(x,y)= \frac{x}{y}$, where
\begin{align}
 \Big(\frac{x}{y} \Big)(t) = \frac{x(t)}{y(t)}, \qquad t \in [0,1].    
\end{align}
Then the function h is continuous when $D([0,1], \mathbb{R}) \times C^{\uparrow}_{0}([0,1], \mathbb{R})$ is endowed with the weak $M_{1}$ topology and $D([0,1], \mathbb{R})$ is endowed with the standard $M_{1}$ topology.
\end{lemma}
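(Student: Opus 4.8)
The plan is to reduce the continuity of the division map $h(x,y)=x/y$ to a known characterization of $M_1$ convergence for the numerator combined with elementary properties of the positive continuous denominator. The key structural fact I would use is that on $D([0,1],\mathbb{R})$ the weak and strong $M_1$ topologies coincide (both equal $d_{M_1}$) since we are in the one-dimensional target; so it suffices to show $x_n/y_n \to x/y$ in $(D([0,1],\mathbb{R}),d_{M_1})$ whenever $x_n \to x$ in $d_{M_1}$ and $y_n \to y$ in $C^{\uparrow}_0([0,1],\mathbb{R})$ with the $M_1$ (equivalently uniform, by Corollary 12.5.1 of \cite{Whitt2002stochastic}, since $y$ is continuous monotone) topology. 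First I would record that $y$ is continuous, nondecreasing and $y(0)>0$, hence $y(t)\geq y(0)>0$ for all $t$, and that $M_1$-convergence $y_n\to y$ with $y$ continuous forces $\|y_n-y\|_{[0,1]}\to 0$; consequently $\inf_t y_n(t) \geq y(0)/2 > 0$ for all large $n$, so $1/y_n$ is well defined, continuous, and $\|1/y_n - 1/y\|_{[0,1]}\to 0$.

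Next I would exploit the representation-based description of $M_1$ convergence. Take a strong parametric representation $(r,u)$ of the completed graph $\Gamma_x$ and, via $d_{M_1}(x_n,x)\to 0$, representations $(r_n,u_n)\in\Pi(x_n)$ with $\|r_n-r\|_{[0,1]}\vee\|u_n-u\|_{[0,1]}\to 0$. The candidate representation for $x_n/y_n$ is the pair $\bigl(r_n,\ u_n\cdot (1/y_n)\circ r_n\bigr)$: since $y_n$ is continuous and nondecreasing and $r_n$ is continuous nondecreasing, $(1/y_n)\circ r_n$ is continuous (and nonincreasing), so the product $u_n\cdot(1/y_n\circ r_n)$ is a continuous spatial component, and I would check it parametrizes $\Gamma_{x_n/y_n}$ — the only point to verify is that at a jump time $t$ of $x_n$ the segment $[x_n(t-)/y_n(t),\,x_n(t)/y_n(t)]$ is traced out correctly, which holds because division by the single positive number $y_n(t)$ is an affine bijection of the segment $[x_n(t-),x_n(t)]$ and $y_n\circ r_n$ is constant on the flat stretch of $r_n$ over $\{t\}$. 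Then I would estimate
\begin{align}
\bigl\| u_n\cdot (1/y_n)\!\circ r_n - u\cdot (1/y)\!\circ r \bigr\|_{[0,1]}
\leq \|u_n-u\|_{[0,1]}\,\|(1/y_n)\!\circ r_n\|_{[0,1]} + \|u\|_{[0,1]}\,\bigl\|(1/y_n)\!\circ r_n - (1/y)\!\circ r\bigr\|_{[0,1]},
\end{align}
and bound the last factor by $\|1/y_n - 1/y\|_{[0,1]} + \bigl\|(1/y)\circ r_n - (1/y)\circ r\bigr\|_{[0,1]}$, the second term going to $0$ by uniform continuity of $1/y$ and $\|r_n-r\|_{[0,1]}\to 0$. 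Together with $\|r_n-r\|_{[0,1]}\to 0$ this gives $d_{M_1}(x_n/y_n,\,x/y)\to 0$.

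The main obstacle is the verification that the proposed pair is genuinely a strong parametric representation of $\Gamma_{x_n/y_n}$ — i.e. that forming the completed graph commutes with pointwise division by a continuous positive monotone function and that monotonicity of the time component is preserved. This is where the hypotheses "$y$ continuous" and "$y(0)>0$" (hence $y$ bounded away from $0$) are essential: continuity of $y$ guarantees $y_n\circ r_n$ does not itself introduce jumps that would break the segment structure of $\Gamma_{x_n}$, and positivity guarantees the affine maps $z\mapsto z/y_n(t)$ are orientation-preserving bijections so that endpoints of product segments map to endpoints. An alternative, possibly cleaner route for this step is to invoke Corollary 12.5.1 of \cite{Whitt2002stochastic} after noting that for monotone $y$ the map $x\mapsto x/y$ can be analyzed coordinate-wise on a countable dense set, but the parametric-representation argument above is the most transparent and self-contained, and I would present that one.
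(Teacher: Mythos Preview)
Your argument is correct but takes a genuinely different route from the paper. The paper's proof is a two-line reduction: it writes $x/y = x\cdot(1/y)$, uses Whitt's Corollary~12.5.1 (pointwise convergence on a dense set characterizes $M_1$ convergence for monotone functions) to pass from $d_{M_1}(y_n,y)\to 0$ to $d_{M_1}(1/y_n,1/y)\to 0$, and then invokes the multiplication continuity result (Lemma~\ref{l:contmultpl}, i.e.\ Whitt's Theorem~13.3.2), which applies because $1/y$ is continuous so the common-discontinuity hypothesis is vacuous. Your approach instead works directly at the level of parametric representations: you upgrade $y_n\to y$ to uniform convergence, build the candidate parametrization $(r_n,\,u_n\cdot(1/y_n)\!\circ r_n)$ of $\Gamma_{x_n/y_n}$, verify it is admissible using continuity and positivity of $y_n$, and estimate distances by hand. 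This is more self-contained (no appeal to Theorem~13.3.2) and makes transparent exactly where continuity and $y(0)>0$ enter; the paper's route is shorter and more modular.

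One small technical point to tighten: as written you fix a single $(r,u)\in\Pi(x)$ and then choose $(r_n,u_n)\in\Pi(x_n)$ with $\|r_n-r\|\vee\|u_n-u\|\to 0$. The definition of $d_{M_1}$ only guarantees, for each $n$, a pair $(r_n,u_n)\in\Pi(x_n)$ and an $n$-dependent $(\tilde r_n,\tilde u_n)\in\Pi(x)$ with $\|r_n-\tilde r_n\|\vee\|u_n-\tilde u_n\|\to 0$. Your estimate goes through verbatim with $(\tilde r_n,\tilde u_n)$ in place of $(r,u)$, since $(\tilde r_n,\,\tilde u_n\cdot(1/y)\!\circ\tilde r_n)\in\Pi(x/y)$ for every $n$ and $\|\tilde u_n\|_{[0,1]}\le\|x\|_{[0,1]}<\infty$ uniformly; just make that substitution explicit.
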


\begin{proof}
The proof of Lemma \ref{l:M1div} can be found in the Appendix.    
\end{proof}

The next section considers conditions under which the joint partial sum process $L_n$ satisfies a functional limit theorem with the limit $L = \left( L_1, L_2 \right)$, where $L_1$ is an $\alpha-$stable L\'{e}vy process and $L_2$ is is an $\alpha/2-$stable L\'{e}vy process. Specifically, we express the components of the  limiting process $L = \left( L_1, L_2 \right)$ as functionals of the limiting point process $N = \sum_i \sum_j \delta_{( T_i, P_i \eta_{ij} )}$ based on the characteristic triple of $L_1$, given by $( a, \nu, b )$. Notice that $a \in \mathbb{R}$ is the location parameter and $\alpha \in (0,2)$ is the stability parameter, which is useful in determining the decay of the tail of the distribution of $X$. Specifically, we consider cases such that $\alpha < 2$, which excludes Gaussian processes. We introduce a mode of convergence for real-valued, locally bounded functions on a locally compact metrizable space which does not necessarily require that the \textit{i.i.d} assumption holds, thereby allowing for clustering of dependent observations for stationary regularly varying sequences. Our main result establishes the weak convergence of self-normalized partial sum processes of stationary regularly varying sequences to a stable L\'{e}vy process in the space $D[0,1]$ equipped with Skorokhod's $M_1$ topology. 

\newpage

\section{Joint Functional Convergence of Partial Sum Processes}
\label{Section3}

\subsection{Joint Functional Convergence of Partial Sum Processes with $\alpha \in (0,1)$} 

Let $(X_{n})_{ n \in \mathbb{Z} }$ be a stationary sequence of random variables, jointly regularly varying with index $\alpha \in (0,1)$. Assume Conditions~\ref{cond1} and \ref{c:mixcond2} hold. Let
\begin{align}
S_{n} = \sum_{k=1}^{n}X_{k}, \qquad V^2_{n} = \sum_{k=1}^{n}X_{k}^{2}, \qquad n \in \mathbb{N}     
\end{align}
and denote with $L_{n}(\cdot)$, the joint partial sum process where $ L_n(\cdot) := \big( L_{1n}(\cdot), L_{2n}(\cdot) \big) $ such that 
\begin{align}
L_{n}(t) =  \left(  \sum_{k=1}^{[nt]} \frac{X_{k}}{a_{n}}, \sum_{k=1}^{ \lfloor nt \rfloor} \frac{X_{k}^{2}}{a_{n}^{2}}  \right),  \qquad t \in [0,1],   
\end{align}
where the sequence $( a_{n} )_{n \in \mathbb{N} }$ defined by expression \eqref{e:onedimregvar} such that property (\ref{e:niz}) holds. 

The first coordinate of the joint functional $L_n(.)$ corresponds to partial sums of a linear process while the second coordinate corresponds to partial sums of squares of a linear process. The key idea to establish weak convergence results on a probability space $\left( \Omega, \mathbb{P}, \mathcal{F} \right)$, as $n \to \infty$, is to represent the joint stochastic process $L_n$ as the image of the time-space point process $N_n$ under an appropriate summation functional. Using the continuity properties of $L_n$ and by an application of the continuous mapping theorem we can transfer the weak convergence of the time-space point process  $N_n$, given by (\ref{e:BaTa}) to weak convergence of the joint stochastic process $L_n$ in the $M_1$ topology. Fix $0 < u < \infty$ and define the summation functional
\begin{align}
&\Phi^{(u)} \colon \mathbf{M}_{p}([0,1] \times \mathbb{E}) \to D([0,1], \mathbb{R}^{2})  
\\
&\Phi^{(u)} \Big( \sum_{i}\delta_{(t_{i}, x_{i})} \Big) (t)
=  
\Big( \sum_{t_{i} \leq t}x_{i}\,1_{\{u < |x_{i}| < \infty \}},  \sum_{t_{i} \leq t} x_{i}^{2}\,1_{\{u < |x_{i}| < \infty \}}  \Big), \qquad t \in [0,1],   
\end{align}
where the space $\mathbf{M}_p([0,1] \times \mathbb{E})$ of Radon point measures on $[0,1] \times \mathbb{E}$ is equipped with the vague topology (see \cite{resnick2008extreme}, Chapter 3). Let $\Lambda = \Lambda_{1} \cap \Lambda_{2}$, where
\begin{multline}
\label{lampdas1}
\Lambda_{1} =
\{ \eta \in \mathbf{M}_{p}([0,1] \times \mathbb{E}) :
\eta ( \{0,1 \} \times \mathbb{E}) = 0 = \eta ([0,1] \times \{ \pm \infty, \pm u \}) \}, 
\\[1em]
\shoveleft 
\Lambda_{2} =
\{ \eta \in \mathbf{M}_{p}([0,1] \times \mathbb{E}) :
\eta ( \{ t \} \times (u, \infty]) \cdot \eta ( \{ t \} \times [-\infty,-u)) = 0, \  \text{for all $t \in [0,1]$} \}.
\end{multline}
The definitions of the sets $\Lambda_1$ and $\Lambda_2$ are instrumental for our main results. The elements of $\Lambda_2$ have the property that atoms in $[0,1] \times \mathbb{E}_{u}$ with the same time coordinate are all on the same side of the time axis. Similar to Lemma 3.1 in \cite{basrak2010functional} one can prove that the Poisson cluster process $N$ given by (\ref{e:BaTa}) a.s.~belongs to the set $\Lambda$ given that each cluster of large  values of $(X_{n})_{ n \in \mathbb{N} }$ contains only values of the same sign. The link between the tail measure of a regularly varying time series and its spectral tail process is studied by \cite{dombry2018tail}.

\newpage

\begin{lemma}
\label{l:prob1}
Assume that with probability one the tail process $(Y_{i})_{i \in \mathbb{Z}}$ of the sequence $(X_{n})_{ n \in \mathbb{N} }$ has no two values of the opposite sign. Then it holds that,
\begin{align}
\mathbb{P} ( N \in \Lambda ) = 1.    
\end{align}
\end{lemma}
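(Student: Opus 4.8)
The plan is to verify the two defining conditions of $\Lambda = \Lambda_{1} \cap \Lambda_{2}$ separately, working directly with the explicit cluster representation $N = \sum_{i}\sum_{j}\delta_{(T_{i},P_{i}\eta_{ij})}$ of \eqref{e:BaTa}, in the spirit of Lemma~3.1 of \cite{basrak2010functional}. Recall that $\sum_{i}\delta_{(T_{i},P_{i})}$ is a Poisson process on $[0,1]\times(0,\infty)$ with intensity $Leb\times\nu$, that $(\sum_{j}\delta_{\eta_{ij}})_{i}$ is an i.i.d.\ sequence of point processes independent of it, and that each $\sum_{j}\delta_{\eta_{ij}}$ is distributed as $\sum_{j}\delta_{Z_{j}/L_{Z}}$ with $L_{Z}=\sup_{j}|Z_{j}|\in(0,\infty)$ almost surely (finiteness being a consequence of Condition~\ref{c:mixcond2}, which forces $|Y_{n}|\to 0$); in particular $|\eta_{ij}|\in(0,1]$ and $P_{i}\in(0,\infty)$ a.s.

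For $\mathbb{P}(N\in\Lambda_{1})=1$ I would rule out mass on the three forbidden sets. Since $Leb(\{0,1\})=0$, the intensity $Leb\times\nu$ puts no mass on $\{0,1\}\times(u',\infty)$ for any $u'>0$, so letting $u'\downarrow 0$ there is a.s.\ no $i$ with $T_{i}\in\{0,1\}$, i.e.\ $N(\{0,1\}\times\mathbb{E})=0$ a.s. Because $P_{i}<\infty$ and $|\eta_{ij}|\le 1$ a.s., every atom location $P_{i}\eta_{ij}$ is finite and nonzero, so $N([0,1]\times\{\pm\infty\})=0$ a.s. To exclude atoms on the lines $\{\pm u\}$, condition on the family $(\eta_{ij})_{i,j}$: then $A:=\{\pm u/\eta_{ij}:i,j\}$ is a countable subset of $\mathbb{R}\setminus\{0\}$, the Poisson process $\sum_{i}\delta_{(T_{i},P_{i})}$ is independent of $A$ and has second-coordinate intensity $\nu$ which is non-atomic, so the expected number of $i$ with $P_{i}\in A$ equals $\nu(A)=0$; since the event that some atom of $N$ lies on a line $\{\pm u\}$ is contained in $\{\exists i:P_{i}\in A\}$, we conclude $N([0,1]\times\{\pm u\})=0$ a.s. Intersecting the three full-probability events gives $N\in\Lambda_{1}$ a.s.

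For $\mathbb{P}(N\in\Lambda_{2})=1$ — the step where the hypothesis enters — note that the assumed property of the tail process $(Y_{i})_{i\in\mathbb{Z}}$ (a.s.\ no two entries of opposite sign) is inherited by $(Z_{j})$, which is distributed as $(Y_{j}\mid\sup_{i\le-1}|Y_{i}|\le 1)$, and then by $\eta_{1j}=Z_{j}/L_{Z}$ after the positive rescaling, hence by every i.i.d.\ copy $(\eta_{ij})_{j}$. Thus within a single cluster $i$ all atoms $P_{i}\eta_{ij}$ share the sign of the corresponding $\eta_{ij}$ (as $P_{i}>0$), so they cannot simultaneously meet $(u,\infty]$ and $[-\infty,-u)$. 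Since $(T_{i})_{i}$ is a simple point process its points are a.s.\ pairwise distinct, so each time level $t\in[0,1]$ is met by at most one cluster; therefore $N(\{t\}\times(u,\infty])\cdot N(\{t\}\times[-\infty,-u))=0$ for every $t\in[0,1]$ a.s., i.e.\ $N\in\Lambda_{2}$ a.s. Combining with the previous paragraph, $\mathbb{P}(N\in\Lambda)=1$.

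The only genuinely delicate point is the exclusion of atoms on the lines $\{\pm u\}$, which rests on the conditional independence between the Poisson radii $(P_{i})$ and the cluster shapes $(\eta_{ij})$ together with the absolute continuity (non-atomicity) of $\nu$; the remaining assertions follow from the a.s.\ boundedness of the (conditioned) tail process and from the sign restriction assumed in the statement.
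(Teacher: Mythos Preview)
Your argument is correct and follows exactly the route the paper indicates: the paper does not spell out a proof of this lemma but simply points to Lemma~3.1 in \cite{basrak2010functional}, and you have faithfully reproduced that argument adapted to the present cluster representation $N=\sum_{i}\sum_{j}\delta_{(T_{i},P_{i}\eta_{ij})}$. The treatment of the three null sets for $\Lambda_{1}$ (in particular the conditioning step for $\{\pm u\}$ using non-atomicity of $\nu$ and independence of the cluster shapes) and the passage from the sign hypothesis on $(Y_{i})$ to $(\eta_{ij})$ combined with a.s.\ distinctness of the $T_{i}$ for $\Lambda_{2}$ are precisely the ingredients BKS use, so nothing further is required.
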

Next, we examine the continuity properties of the stochastic process $\Phi^{(u)}$ is over the set $\Lambda$, where $\Lambda = \Lambda_{1} \cap \Lambda_{2}$ and $\Lambda_{1}, \Lambda_{2}$ are given by \eqref{lampdas}.
\begin{lemma}
\label{l:contfunct}
The summation functional $\Phi^{(u)} \colon \mathbf{M}_{p}([0,1] \times \mathbb{E}) \to D([0,1], \mathbb{R}^{2})$ is continuous on the set $\Lambda$, when $D([0,1], \mathbb{R}^{2})$ is endowed with the weak $M_{1}$ topology.
\end{lemma}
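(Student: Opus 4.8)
The plan is to mimic the proof of the analogous continuity result in \cite{basrak2010functional} (their Lemma 3.2) and in \cite{krizmanic2017weak}, adapting it to the two–dimensional vector functional $\Phi^{(u)}=(\Phi^{(u)}_1,\Phi^{(u)}_2)$ whose coordinates record, respectively, the sum of the atoms and the sum of their squares that exceed the level $u$ in absolute value. Since the target space is endowed with the \emph{weak} $M_1$ topology, by the remark following the definition of $d_p$ it suffices to prove that each coordinate map $\eta\mapsto\Phi^{(u)}_j(\eta)$ is continuous from $\mathbf{M}_p([0,1]\times\mathbb{E})$ (with the vague topology) into $D([0,1],\mathbb{R})$ with the one–dimensional (strong $=$ weak) $M_1$ topology at every $\eta\in\Lambda$. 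So first I would reduce the vector statement to two scalar statements.

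Next I would fix $\eta=\sum_i\delta_{(t_i,x_i)}\in\Lambda$ and a sequence $\eta_n\vto\eta$ vaguely, and set up the standard enumeration: because $\eta\in\Lambda_1$, $\eta$ has no atoms on $\{0,1\}\times\mathbb{E}$ nor on $[0,1]\times\{\pm u,\pm\infty\}$, so on the compact set $[0,1]\times\{u\le|x|\le\infty\}$ — wait, rather on $[0,1]\times\{|x|\ge u+\epsilon'\}$-type sets — $\eta$ has finitely many atoms $(t_1,x_1),\dots,(t_k,x_k)$ with $|x_\ell|>u$, all with distinct time coordinates after grouping, and vague convergence forces $\eta_n$ to have, for $n$ large, exactly $k$ atoms $(t_\ell^n,x_\ell^n)$ in a neighbourhood of each $(t_\ell,x_\ell)$ with $t_\ell^n\to t_\ell$ and $x_\ell^n\to x_\ell$, and no other atoms with absolute value bounded away from $u$. (This is where $\eta([0,1]\times\{\pm u\})=0$ is used: no mass escapes across the threshold.) Then $\Phi^{(u)}_1(\eta)$ is the pure–jump step function with jump $x_\ell$ at time $t_\ell$, and $\Phi^{(u)}_2(\eta)$ the step function with jump $x_\ell^2$ at $t_\ell$; similarly for $\eta_n$ with $x_\ell^n, (x_\ell^n)^2$ at $t_\ell^n$. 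I would then construct explicit parametric representations $(r,u)$ and $(r_n,u_n)$ of the completed graphs that linearly interpolate through the successive jump points in time order, and show $\|r_n-r\|\vee\|u_n-u\|\to0$; the only subtlety is when two atoms of $\eta$ share a time coordinate $t_\ell=t_{\ell+1}$, where one must check the jumps add up consistently — and here the role of $\Lambda_2$ enters: atoms with the same time coordinate lie on the same side of the axis, so $x_\ell$ and $x_{\ell+1}$ have the same sign, hence the partial sums along the completed graph of $\Phi^{(u)}_1(\eta)$ traverse the segment $[[\,\cdot\,]]$ monotonically and the interpolation is legitimate; likewise the squares are automatically nonnegative so $\Phi^{(u)}_2$ is monotone along such clusters with no sign issue at all. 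This same–sign-within-a-cluster property is exactly what makes the $M_1$ (rather than $J_1$) limit work.

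The main obstacle, as in the one–dimensional case, is precisely the behaviour at multiple points, i.e.\ handling the case where $\eta_n$ resolves a time–coincident cluster of $\eta$ into several atoms with nearby but distinct times $t_\ell^n<t_{\ell+1}^n$, both $\to t_\ell$: one must verify that the completed graph of $\Phi^{(u)}_j(\eta_n)$, which now makes two small separated jumps, stays within sup-distance $o(1)$ of the completed graph of $\Phi^{(u)}_j(\eta)$, which makes one jump of the combined size. This is true in $M_1$ because the completed graph of the limit already contains the whole segment from the pre–jump to the post–jump value, and the intermediate value reached by $\eta_n$ after its first sub–jump lies on that segment (using same sign from $\Lambda_2$), so a suitable reparametrization absorbs the time gap $t_{\ell+1}^n-t_\ell^n\to0$. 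I would write this out by exhibiting the parametrizations concretely on each of the finitely many relevant subintervals and bounding the two sup–norms; away from the finitely many jump times both functions are locally constant and agree up to $o(1)$, so the estimate is immediate there. Finally I would note the argument is uniform in the (finitely many) atoms and conclude $d_{M_1}(\Phi^{(u)}_j(\eta_n),\Phi^{(u)}_j(\eta))\to0$ for $j=1,2$, hence $d_p(\Phi^{(u)}(\eta_n),\Phi^{(u)}(\eta))\to0$, which is continuity in the weak $M_1$ topology on $\Lambda$.
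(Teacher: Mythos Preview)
Your proposal is correct and follows essentially the same approach as the paper: both reduce weak $M_1$ convergence in $D([0,1],\mathbb{R}^2)$ to coordinate-wise $M_1$ convergence via the product metric $d_p$ (the paper cites Theorem 12.5.2 in \cite{Whitt2002stochastic} for this), and then handle each coordinate by invoking, with small modifications, Lemma~3.2 of \cite{basrak2010functional}. The paper's proof simply cites that lemma for each coordinate without spelling out the details, whereas you have sketched what the BKS argument actually does (finitely many atoms above level $u$ via $\Lambda_1$, matching atoms under vague convergence, explicit parametric representations, and the role of $\Lambda_2$ at time-coincident clusters, with the observation that the second coordinate involves only nonnegative jumps so the sign condition is automatic there); but this is an elaboration of the same route, not a different one.
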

In the theorem below we establish functional convergence of the stochastic process $L_{n}$, with the limit $L = (L_{1}, L_{2})$ consisting of stable L\'{e}vy processes $L_{1}$ and $L_{2}$. Without loss of generality, the distribution of a L\'{e}vy process $V$ is characterized by its characteristic triple, that is, the characteristic triple of the infinitely divisible distribution of $V(1)$. Then, the characteristic function of $V(1)$ and the characteristic triple $(a, \nu, b)$ are related in the following way:
\begin{equation}
\label{e:Kintchin}
\mathbf{E} \left[ e^{iz V(1) } \right] 
= 
\exp \biggl( -\frac{1}{2}az^{2} + ibz + \int_{\mathbb{R}} \bigl( e^{izx}-1-izx \boldsymbol{1}_{[-1,1]}(x) \bigr)\,\nu(\rmd x) \biggr)
\end{equation}
for some $z \in \mathbb{R}$. Here $a \ge 0$, $b \in \mathbb{R}$ are constants, and $\nu$ is a measure on $\mathbb{R}$ satisfying
\begin{align}
\nu ( \{0\})=0 \qquad \text{and} \qquad \int_{\mathbb{R}}(|x|^{2} \wedge 1)\,\nu(\rmd x) < \infty.    
\end{align}

\begin{theorem}[see, \cite{krizmanic2020joint}]
\label{t:functconvergence}
Let $(X_{n})_{ n \in \mathbb{N} }$ be a stationary sequence of random variables, jointly regularly varying with index $\alpha\in(0,1)$, and of which the tail process $(Y_{i})_{i \in \mathbb{Z}}$ almost surely has no two values of the opposite sign. Suppose that Conditions~\ref{cond1} and~\ref{c:mixcond2} hold. Then it holds that,
\begin{align}
 L_{n} \dto L \qquad \textrm{as} \ n \to \infty,    
\end{align}
in $D([0,1], \mathbb{R}^{2})$ endowed with the weak $M_{1}$ topology, where $L = (L_{1}, L_{2})$, $L_{1}$ is an $\alpha$--stable L\'{e}vy process with characteristic triple $(0,
\nu_{1}, \gamma_{1})$ and $L_{2}$ is an $\alpha/2$--stable L\'{e}vy process with characteristic triple $(0, \nu_{2}, \gamma_{2})$, where
\begin{align}
\nu_{1}(\rmd x) &= \theta \alpha \big( c_{+} 1_{(0,\infty)}(x) + c_{-} 1_{(-\infty, 0)}(x) \big) |x|^{-\alpha -1}\,\rmd x,    
\\
\nu_{2}(\rmd x) &=  \frac{\theta \alpha}{2} \mathbf{E}\bigg( \sum_{j}\eta_{j}^{2}\bigg)^{\alpha/2} x^{-\alpha/2 -1}1_{(0,\infty)}(x) \,\rmd x,
\\
c_{+} &= \mathbf{E} \bigg[ \Big( \sum_{j}\eta_{j} \Big)^{\alpha} 1_{ \{ \sum_{j}\eta_{j} > 0 \}} \bigg], \quad
c_{-} = \mathbf{E} \bigg[ \Big( - \sum_{j}\eta_{j} \Big)^{\alpha} 1_{ \{ \sum_{j}\eta_{j} < 0 \}} \bigg]    
\\
\gamma_{1} &=  \frac{\theta \alpha}{1-\alpha} (c_{+}-c_{-}), \qquad \gamma_{2} = \frac{\theta \alpha}{2-\alpha} \mathbf{E}\bigg( \sum_{j}\eta_{j}^{2}\bigg)^{\alpha/2}.  
\end{align}
\end{theorem}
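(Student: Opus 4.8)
The plan is to realize the joint partial sum process $L_n$ as a continuous image of the time--space point process $N_n$ and then push the point process convergence \eqref{e:BaTa} through the continuous mapping theorem. First I would introduce, for each fixed truncation level $u>0$, the truncated process $L_n^{(u)} := \Phi^{(u)}(N_n)$, where $\Phi^{(u)}$ is the summation functional defined just before Lemma~\ref{l:contfunct}. By Lemma~\ref{l:prob1} the limiting Poisson cluster process $N$ lies in $\Lambda = \Lambda_1 \cap \Lambda_2$ almost surely, and by Lemma~\ref{l:contfunct} the functional $\Phi^{(u)}$ is continuous on $\Lambda$ when the range carries the weak $M_1$ topology; hence the continuous mapping theorem together with \eqref{e:BaTa} yields $L_n^{(u)} \dto \Phi^{(u)}(N) =: L^{(u)}$ in $D([0,1],\mathbb{R}^2)$ with the weak $M_1$ topology, as $n \to \infty$.

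Next I would identify the law of $L^{(u)}$. Since $\sum_i \delta_{(T_i,P_i)}$ is Poisson with intensity $\mathrm{Leb}\times\nu$, $\nu(\mathrm{d}x) = \theta\alpha x^{-\alpha-1}\mathbf{1}_{(0,\infty)}(x)\,\mathrm{d}x$, and the cluster processes $\sum_j \delta_{\eta_{ij}}$ are i.i.d.\ and independent of it, $L^{(u)}$ is a process with independent increments whose jumps come from the points $(T_i, P_i \sum_j \eta_{ij}\mathbf{1}\{\cdots\})$ and $(T_i, P_i^2 \sum_j \eta_{ij}^2\mathbf{1}\{\cdots\})$ in the two coordinates; here the fact that $N \in \Lambda_2$ guarantees that all atoms of a cluster sit on the same side of zero, so the first coordinate aggregates them additively without cancellation issues across the cluster. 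Computing the resulting Lévy measures (a change of variables $x \mapsto P x$ and taking expectations over the cluster law, using Corollary~\ref{Corollary1} to ensure $\mathbf{E}(\sum_j|\eta_j|)^\alpha<\infty$ and $\mathbf{E}(\sum_j\eta_j^2)^{\alpha/2}<\infty$) gives exactly $\nu_1$ and $\nu_2$ for the truncated-from-below versions $\nu_1(\cdot\cap\{|x|>u\})$, $\nu_2(\cdot\cap\{x>u'\})$, and the centering constants in the Lévy--Khintchine triple \eqref{e:Kintchin} reduce, in the range $\alpha\in(0,1)$, to the stated $\gamma_1,\gamma_2$ once the truncation is removed. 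I would then let $u \to 0$ and check that $L^{(u)} \to L$ almost surely (or in probability) in the $M_1$ topology, where $L$ has the characteristic triples $(0,\nu_1,\gamma_1)$ and $(0,\nu_2,\gamma_2)$; the case $\alpha<1$ is what makes the small-jump series $\sum_{|x|\le u}$ absolutely summable, so the $\mathbf{1}_{[-1,1]}$ compensator in \eqref{e:Kintchin} is not actually needed and the limit is a genuine $\alpha$-stable (resp.\ $\alpha/2$-stable) Lévy process.

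The final step is the standard ``approximation'' argument to pass from $L_n^{(u)}$ to $L_n$: I would show
\begin{align}
\lim_{u\to 0}\ \limsup_{n\to\infty}\ \mathbb{P}\Big( d_{p}\big(L_n^{(u)}, L_n\big) > \varepsilon \Big) = 0 \qquad \text{for every } \varepsilon>0,
\end{align}
where $d_p$ is the product $M_1$ metric from \eqref{e:defdp}. For the first (signed) coordinate this is precisely the estimate controlling the contribution of the small values $X_k\mathbf{1}\{|X_k|\le u a_n\}$ to the partial sums, which for $\alpha\in(0,1)$ follows from a direct first-moment (or Karamata) bound on $\mathbf{E}|X_1|\mathbf{1}\{|X_1|\le ua_n\}$ together with the anticlustering Condition~\ref{c:mixcond2} and the mixing Condition~\ref{cond1} to handle dependence across blocks, so that the truncated-away part is uniformly small in the $M_1$ (indeed sup) metric; for the second coordinate one runs the same argument with $X_k^2$ and exponent $\alpha/2\in(0,1)$. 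Combining this with the convergence $L^{(u)}\dto L$ via a Slutsky-type theorem for weak convergence in metric spaces (Theorem 3.5 of \cite{resnick2008extreme}-type) yields $L_n \dto L$ in $D([0,1],\mathbb{R}^2)$ with the weak $M_1$ topology. The main obstacle I anticipate is \emph{not} the moment bookkeeping but the $M_1$-continuity of $\Phi^{(u)}$ on $\Lambda$ (Lemma~\ref{l:contfunct}) and the verification that the $M_1$ distance between $L_n^{(u)}$ and $L_n$ is genuinely controlled — the subtlety being that within a temporal cluster the true partial sum process makes several small jumps of the same sign that must be shown to be absorbable into a single $M_1$-parametrization, which is exactly where the same-sign hypothesis on the tail process (and hence $N\in\Lambda_2$) is indispensable and where the $J_1$ topology would fail.
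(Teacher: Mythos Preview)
Your proposal is correct and follows essentially the same route as the paper: truncate via $\Phi^{(u)}$, apply Lemmas~\ref{l:prob1} and~\ref{l:contfunct} together with the point process convergence \eqref{e:BaTa} and the continuous mapping theorem to get $L_n^{(u)}\dto L^{(u)}$, let $u\to 0$ on the limit side using the moment bounds of Corollary~\ref{Corollary1} and the It\^{o} representation to identify the stable triples, and close with the Slutsky-type argument. One small simplification worth noting: for the approximation step $\lim_{u\to0}\limsup_n\mathbb{P}(d_p(L_n,L_n^{(u)})>\epsilon)=0$, the paper does \emph{not} invoke Conditions~\ref{cond1} or~\ref{c:mixcond2} at all---it simply bounds the $d_p$ (hence sup) distance by $\sum_{i=1}^n |X_i|a_n^{-1}\mathbf{1}\{|X_i|\le ua_n\}$ (resp.\ with squares), applies stationarity and Markov's inequality, and then Karamata gives the $u^{1-\alpha}$ (resp.\ $u^{2-\alpha}$) decay; the same-sign hypothesis is used only in Lemma~\ref{l:prob1} to ensure $N\in\Lambda$ for the continuity of $\Phi^{(u)}$, not in this final estimate.
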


\newpage

\begin{remark}
Recall that a Levy process $V$ with the characteristic triple given by $(  b, c, \nu )$,  with respect to a truncation function $\kappa$, is a stochastic process with a characteristic function given by 
\begin{align}
\mathbb{E} \big[  e^{ i z L_t } \big] = \mathsf{exp} \left(  - \frac{1}{2} t c  z^2 - i t b z + t \int_{ \mathbb{R} } \big(  e^{ i z x } - 1 - i z \kappa(x) \big) \nu( dx ) \right),    
\end{align}
where $\kappa (x) = - \kappa (x)$ and $t \geq 0$, with $\kappa(x) = \boldsymbol{1}_{ [-1,1] }$. For example, if $t = 1$, then we have that 
\begin{align}
\mathbb{E} \big[  e^{ i \mathsf{z} L(1) } \big] = \mathsf{exp} \left(  - \frac{1}{2} c  z^2 - i b z +  \int_{ \mathbb{R} } \big(  e^{ i z x } - 1 - i z \kappa(x) \big) \nu( dx ) \right),    
\end{align}
\end{remark}
For infinite order linear processes with all coefficients of the same sign the idea is to approximate them by a sequence of finite order linear processes for which the theoretical conditions of the main theorem applies to the case of finite order linear processes with a truncation parameter $m$. Then, in order to verify that the equivalence limit process result holds, we need to determine the exact values of the characteristic triple process $V$. Moreover, to complete the proof we need to check that the stochastic process $L_n(t) := \big( V_n(t), W_n(t)  \big)$ jointly weakly converges within the $M_1$ topology which implies that each coordinate separately satisfies these theoretical assumptions and weakly convergence arguments. Then, proving that  $L_n$ converge in distribution to some asymptotic process, say $L = \big( V, W \big)$, in the standard $M_1$ topology, then using the fact that linear combinations of the coordinates of the joint functional are continuous and satisfy all related assumptions in the same topological space, then we can conclude that indeed $L_n$ weakly converge to its asymptotic counterpart $L$, endowed with the $M_1$ topology.

\subsection{Joint Functional Convergence of Partial Sum Processes with $\alpha \in [1,2)$}

In this section we concentrate on establishing the joint functional weak convergence result for the case where the regularly varying index $\alpha \in [1,2)$. In other words, we investigate the joint weak functional limiting behaviour of the partial sum and partial sum squares processes given by the stochastic process $L_n (.) = \left( L_{1n}(.), L_{2n}(.) \right)$ in the space $D\left( [0,1], \mathbb{R}^2 \right)$, where  
\begin{align}
L_{1n}(t) = \frac{ S_{ \floor{nt} } }{ a_n } - \floor{nt} b_{1n}, \ \ \ L_{2n}(t) =  \frac{ V^2_{ \floor{nt} } }{ a_n } - \floor{nt} b_{2n}, \ \ \ t \in [0,1],    
\end{align}
where $S_{ \floor{nt}} = \sum_{k=1}^{\floor{nt}} X_k$ and $V^2_{ \floor{nt}} = \sum_{k=1}^{\floor{nt}}  X^2_k $. Since $\left\{ a_n  \right\}$ is a sequence of positive real numbers such that $n \mathbb{P} \left( \left| X_1 \right| > a_n \right) \to 1$, as $n \to \infty$ and $L_n$ correspond to the centered stochastic process $L_n = \big( L_{1n}, L_{2n} \big)$ by the expected value of the truncated functional of each coordinate.
 
Therefore, we obtain the following expressions
\begin{align*}
L_{1n}(t) &=  \sum_{k=1}^{\floor{nt}} \frac{X_k}{a_n} - \floor{nt} \mathbb{E} \left[ \frac{X_1}{ a_n } \mathbf{1}_{ \left\{ u < \frac{ | X_1 | }{ a_n } \leq 1 \right\} } \right], \ \ \text{for} \ \alpha \in [1,2)
\\
L_{2n}(t) &= \sum_{k=1}^{\floor{nt}} \frac{X^2_k}{a^2_n} - \floor{nt} \mathbb{E} \left[ \frac{X^2_1}{ a^2_n } \mathbf{1}_{ \left\{ u < \frac{ X_1^2 }{ a^2_n } \leq 1 \right\} } \right], \ \ \text{for} \ \alpha \in [1,2) 
\end{align*}

\newpage

The constants $b_{1n}$ and $b_{2n}$ are defined with respect to the the regularly varying index as below
\begin{align}
\label{bn1}
b_{1n}
&:=
\begin{cases}
0, & \alpha \in (0,1), 
\\
\mathbb{E} \left[ \frac{X_1}{ a_n } \mathbf{1}_{ \left\{  \frac{ | X_1 | }{ a_n } \leq 1 \right\} } \right], & \alpha \in [1,2).
\end{cases}
\\
\label{bn2}
b_{2n}
&:=
\begin{cases}
0, & \alpha \in (0,1), 
\\
\mathbb{E} \left[ \frac{X^2_1}{ a^2_n } \mathbf{1}_{ \left\{  \frac{ X^2_1 }{ a^2_n } \leq 1  \right\} } \right], & \alpha \in [1,2).
\end{cases}
\end{align}
\begin{remark}
Based on the functional convergence result given above we can show that the weak limit of self-normalized partial-sum processes is indeed a stable L\'evy process where $\left\{ b_n \right\}$ is a sequence (see, \eqref{bn1} and \eqref{bn2}) such that it induces centered random variables for the stochastic processes under study. Specifically, for the self-normalized partial sum process, we need to prove that $S_{ \floor{n.} } / V_n \overset{ d }{ \to } L_1 (\,\cdot\,) \big/ \sqrt{L_2(1)}$, is an $\alpha-$stable L\'evy process 
\begin{align}
\frac{ S_{ \floor{n.} } }{ V_n } \overset{ d }{ \to } \frac{ L_1 (\,\cdot\,) }{ \sqrt{L_2(1)} } := \mathcal{L}_{\alpha} (\,\cdot\,)     
\end{align}
where the limit $\mathcal{L}_{\alpha} (\,\cdot\,)$ represents an $\alpha-$stable L\'evy process (see, Theorem \ref{stationary} and Theorem  \ref{t:functSN2}).
\end{remark}

By imposing the vanishing small values condition then convergence of the normalized partial sums can be established by showing that the limiting distribution  is infinitely divisible with a L\'evy triplet corresponding to an $\alpha-$stable distribution ( \cite{denker1989stable}, \cite{bartkiewicz2011stable}). The functional convergence of $L_n$ in the special case when $( X_n )_{n \in \mathbb{N} }$ is a linear process from a regularly varying distribution with index $\alpha \in (0,2)$ was studied in the literature previously. Therefore, the main result of our article shows that for a strictly stationary, regularly varying sequence of dependent random variables $(X_n)_{n \in \mathbb{N} }$ with index $\alpha \in (0,2)$, for which clusters of high-threshold excesses can be broken down into asymptotically independent blocks, the stochastic process $L_n$ converge in the space $D\left( [0,1], \mathbb{R}^2 \right)$ endowed with the Skorokhod weak $M_1$ topology under the condition that all extremes within each cluster of big values have the same sign. This topology is weaker than the more commonly used  Skorokhod $J_1$ topology, the latter being appropriate when there is no clustering of extremes (i.i.d case). 

Following similar argumentation as in the previous section, we will transfer the weak convergence of $N_n$ in expression \eqref{e:BaTa} to weak convergence of $L_n$. When $\alpha \in [1,2)$ we need the following conditions to deal with small jumps. Let $k = \floor{ \frac{n}{r} }$.
\begin{condition}
\label{cond2}
For all $\delta > 0$ it holds that, 
\begin{equation}
\lim_{u \to 0 } \limsup_{n \to \infty}
\mathbb{P} \left( \max_{ 0 \leq k \leq n } \left| \sum_{i=1}^k \left( \frac{X_i}{a_n} \mathbf{1}_{ \left\{ \frac{ |X_i| }{a_n} \leq u \right\} } - \mathbf{E} \left[ \frac{X_i}{a_n} \mathbf{1}_{ \left\{ \frac{|X_i| }{a_n} \leq u \right\} }   \right]  \right)  \right| > \delta  \right) = 0.
\end{equation}
\end{condition}

\newpage

\begin{condition}
\label{cond3}
For all $\delta > 0$ it holds that, 
\begin{equation}
\lim_{u \to 0 } \limsup_{n \to \infty}
\mathbb{P} \left( \max_{ 0 \leq k \leq n } \left| \sum_{i=1}^k \left( \frac{X_i^2}{a_n^2} \mathbf{1}_{ \left\{ \frac{ X_i^2 }{a^2_n} \leq u \right\} } - \mathbf{E} \left[ \frac{X_i^2}{a_n^2} \mathbf{1}_{ \left\{ \frac{X_i^2 }{a^2_n} \leq u \right\} }   \right]  \right)  \right| > \delta  \right) = 0.
\end{equation}
\end{condition}

\medskip

\begin{remark}
An equivalent of Condition \ref{cond1} but for the sequence of squared random variables holds, which we omit to present for space brevity. Moreover, Condition \ref{cond1} provides a statement for dependent sequences for certain type of mixing processes and it corresponds to the vanishing small values property. The index $\alpha$ is considered as a measure of the regular variation property rather than the degree of dependence of observations generated from the L\'{e}vy process, thus the above conditions could be challenging to prove for different mixing regimes. Condition \ref{cond2} and \ref{cond3} correspond to small jumps condition both for the $\left( X_n \right)_{n \in \mathbb{R}}$ as well as $\left( X^2_n \right)_{n \in \mathbb{R}}$ stationary, jointly regular varying sequences. A similar condition is given by Assumption 4.4 in \cite{basrak2018invariance}, which allows to prove that the finite dimensional marginal distributions of $V_n$ converge to those of an $\alpha-$stable L\'evy process.      
\end{remark}

\begin{theorem}
\label{stationary}
Let $(X_{n})_{n \in \mathbb{N} }$ be a stationary sequence of random variables, jointly regularly varying with index $\alpha \in [1,2)$, and of which the tail process $(Y_{i})_{i \in \mathbb{Z}}$ almost surely has no two values of the opposite sign. Suppose that Conditions~\ref{cond1} and~\ref{cond2} hold. Then, the stochastic process
\begin{align}
L_n(t) := \big( L_{1n}(t),  L_{2n}(t) \big) \equiv  \left(  \sum_{k=1}^{\floor{nt}} \frac{ X_k }{ a_n } - \floor{nt} b_{1n},  \sum_{k=1}^{\floor{nt}} \frac{ X^2_k }{ a^2_n }  - \floor{nt} b_{2n} \right), \ \ \ t \in [0,1],    
\end{align}
with $b_{1n}$ and $b_{2n}$ given by expressions \eqref{bn1} and \eqref{bn2} respectively. Then, it holds that
\begin{align}
 L_{n} \dto L \ \text{as} \ n \to \infty   
\end{align}
in $D([0,1], \mathbb{R}^{2})$ endowed with the weak $M_{1}$ topology, where $L = \big( L_{1}, L_{2} \big)$, $L_{1}$ is an $\alpha$--stable L\'{e}vy process with characteristic triple $(0, \nu_{1}, \gamma_{1})$ and $L_{2}$ is an $\alpha/2$--stable L\'{e}vy process with characteristic triple $(0, \nu_{2}, \gamma_{2})$, where
\begin{align}
\nu_{1}(\rmd x) &= \theta \alpha \big( c_{+} 1_{(0,\infty)}(x) + c_{-} \boldsymbol{1}_{(-\infty, 0)}(x) \big) |x|^{-\alpha -1}\,\rmd x,    
\\
\nu_{2}(\rmd x) &=  \frac{\theta \alpha}{2} \mathbf{E}\bigg( \sum_{j}\eta_{j}^{2}\bigg)^{\alpha/2} x^{-\alpha/2 -1} \boldsymbol{1}_{(0,\infty)}(x) \,\rmd x,
\\
c_{+} &= \mathbf{E} \bigg[ \Big( \sum_{j}\eta_{j} \Big)^{\alpha} \boldsymbol{1}_{ \{ \sum_{j}\eta_{j} > 0 \}} \bigg], \quad
c_{-} = \mathbf{E} \bigg[ \Big( - \sum_{j}\eta_{j} \Big)^{\alpha} \boldsymbol{1}_{ \{ \sum_{j}\eta_{j} < 0 \}} \bigg]    
\\
\gamma_{1} &=  \frac{\alpha}{\alpha - 1 } \bigg( p - q - \theta \big( c_{+}-c_{-} \big) \bigg). 
\end{align}
\end{theorem}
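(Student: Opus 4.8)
# Proof Proposal for Theorem \ref{stationary}

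The plan is to run the continuous-mapping argument of \cite{basrak2010functional} and \cite{krizmanic2020joint} on the centered process. For fixed $u>0$ I would work with the truncated, centered process
\begin{align*}
L_n^{(u)}(t) = \Phi^{(u)}(N_n)(t) - \floor{nt}\Bigl( \mathbb{E}\bigl[\tfrac{X_1}{a_n}\mathbf{1}_{\{u<|X_1|/a_n\le 1\}}\bigr],\ \mathbb{E}\bigl[\tfrac{X_1^2}{a_n^2}\mathbf{1}_{\{u<X_1^2/a_n^2\le 1\}}\bigr] \Bigr),
\end{align*}
with $\Phi^{(u)}$ the summation functional preceding Lemma~\ref{l:contfunct}. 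Since $N_n\dto N$ by \eqref{e:BaTa}, $\mathbb{P}(N\in\Lambda)=1$ by Lemma~\ref{l:prob1} (the no-opposite-sign hypothesis is exactly what this needs), and $\Phi^{(u)}$ is continuous on $\Lambda$ into $D([0,1],\mathbb{R}^2)$ with the weak $M_1$ topology by Lemma~\ref{l:contfunct}, the continuous mapping theorem gives $\Phi^{(u)}(N_n)\dto \Phi^{(u)}(N)$; joint convergence of the two coordinates is automatic because both are read off the single point process. The centering is handled via Karamata's theorem: $n\,\mathbb{E}[\tfrac{X_1}{a_n}\mathbf{1}_{\{u<|X_1|/a_n\le1\}}]$ converges to an explicit constant, $\floor{nt}$ may be replaced by $nt$ up to a uniformly vanishing error, and subtracting the resulting uniformly convergent sequence of continuous linear functions preserves $M_1$-convergence; hence $L_n^{(u)}\dto L^{(u)}$ for a limit process $L^{(u)}$.

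Next I would identify $L^{(u)}=(L_1^{(u)},L_2^{(u)})$ as a pair of Lévy processes and read off the characteristic triples. Because $\sum_i\delta_{(T_i,P_i)}$ is Poisson with intensity $Leb\times\nu$, $\nu(\rmd x)=\theta\alpha x^{-\alpha-1}\mathbf{1}_{(0,\infty)}(x)\,\rmd x$, and the cluster point processes $(\sum_j\delta_{\eta_{ij}})_i$ are i.i.d.\ and independent of it, $L^{(u)}$ has independent increments and $\mathbb{E}[\exp(i z_1 L_1^{(u)}(1)+iz_2 L_2^{(u)}(1))]$ is computed from the Lévy--Khintchine functional of a Poisson integral; integrating the cluster functionals $\sum_j\eta_j$ and $\sum_j\eta_j^2$ against $\nu$ produces the truncated versions of $\nu_1,\nu_2$ together with the drifts, the constants $c_\pm$ and $\mathbb{E}(\sum_j\eta_j^2)^{\alpha/2}$ arising exactly here (finiteness of the latter and of $c_\pm$ following since $\sum_j\eta_j^2$ and $|\sum_j\eta_j|$ are dominated respectively by $(\sum_j|\eta_j|)^{2}$ and $\sum_j|\eta_j|$, cf.\ Corollary~\ref{Corollary1}). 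Letting $u\downarrow0$, the truncated triples converge to $(0,\nu_1,\gamma_1)$ and $(0,\nu_2,\gamma_2)$; this is where $\gamma_1=\tfrac{\alpha}{\alpha-1}(p-q-\theta(c_+-c_-))$ assembles, the term $p-q$ coming from the limit of $n b_{1n}$ (via $\mu$ in \eqref{e:mu}) and $\theta(c_+-c_-)$ from the truncated-cluster contribution, and likewise $\gamma_2=\tfrac{\theta\alpha}{2-\alpha}\mathbb{E}(\sum_j\eta_j^2)^{\alpha/2}$ from the index-$\alpha/2<1$ subordinator structure. Thus $L^{(u)}\dto L$ as $u\to0$.

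The crucial step is the uniform negligibility of the small-value remainder: for every $\delta>0$,
\begin{align*}
\lim_{u\to0}\ \limsup_{n\to\infty}\ \mathbb{P}\bigl( \|L_n - L_n^{(u)}\|_{[0,1]} > \delta \bigr) = 0,
\end{align*}
after which, since the $M_1$ metric is dominated by the uniform metric, a convergence-together argument upgrades $L_n^{(u)}\dto L^{(u)}$ and $L^{(u)}\dto L$ to $L_n\dto L$ in $D([0,1],\mathbb{R}^2)$ with the weak $M_1$ topology. For the first coordinate, a direct computation shows that $L_{1n}-L_{1n}^{(u)}$ equals precisely the centered sum $\sum_{k\le\floor{n\cdot}}\bigl(\tfrac{X_k}{a_n}\mathbf{1}_{\{|X_k|/a_n\le u\}}-\mathbb{E}[\tfrac{X_k}{a_n}\mathbf{1}_{\{|X_k|/a_n\le u\}}]\bigr)$, so the uniform bound is exactly Condition~\ref{cond2}. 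For the second coordinate, since $\alpha/2\in[1/2,1)$, Karamata's theorem gives that $n\,\mathbb{E}[(X_1^2/a_n^2)^2\mathbf{1}_{\{X_1^2/a_n^2\le u\}}]$ is asymptotically of order $u^{2-\alpha/2}\to0$, so a second-moment / maximal inequality for the weakly dependent (Condition~\ref{cond1}-type) partial sums of squares makes this remainder negligible directly --- this is the small-jump Condition~\ref{cond3}, which under the present hypotheses holds automatically because $\alpha/2<1$, which is why only Conditions~\ref{cond1} and~\ref{cond2} need be assumed.

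I expect the main obstacle to be this last step for the first coordinate when $\alpha$ is near $1$: there $\mathbb{E}|X_1|$ need not be finite, so the cancellation between $\sum_{k\le\floor{nt}}\tfrac{X_k}{a_n}\mathbf{1}_{\{|X_k|/a_n\le u\}}$ and its mean, uniformly in $t$, genuinely requires the maximal-inequality-plus-mixing control packaged into Condition~\ref{cond2} rather than any crude moment estimate. By comparison, checking that the drift subtraction does not spoil the $M_1$-continuity of $\Phi^{(u)}$ (the drift is, up to a uniform $o(1)$, a continuous linear function) and that the no-opposite-sign assumption propagates to $\mathbb{P}(N\in\Lambda)=1$ are routine; a secondary bookkeeping nuisance is tracking the $u$-dependent cancellation between $n b_{1n}$ and the expectation subtracted inside $L_n^{(u)}$ so as to land exactly on the stated $\gamma_1$.
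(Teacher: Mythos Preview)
Your proposal is correct and follows essentially the same route as the paper: continuous mapping for $\Phi^{(u)}(N_n)$ via Lemmas~\ref{l:prob1} and~\ref{l:contfunct}, adjoining the deterministic centering, identifying the $u$-truncated limit and its characteristic triples, and a convergence-together argument based on uniform negligibility of the small-value remainder. Your additional observation that the second-coordinate small-jump bound (Condition~\ref{cond3}) is automatic because $\alpha/2<1$ is correct and fills a gap the paper glosses over with ``from previous conditions''; note that you can argue it even more simply than via a second-moment maximal inequality, since the summands $X_k^2 a_n^{-2}\mathbf{1}_{\{\cdot\}}$ are nonnegative, so $\max_{k\le n}|\sum_{i\le k}(A_i-\mathbb{E}A_i)|\le \sum_{i\le n}A_i+n\mathbb{E}A_1$ and a first-moment Markov/Karamata bound of order $u^{1-\alpha/2}\to0$ suffices with no dependence control needed.
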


\newpage

\begin{remark}
Note that it can be proved that the two components of the limiting process $L = ( L_1, L_2)$ can be expressed as functionals of the limiting point process $N = \sum_i \sum_j \delta_{ ( T_i, P_i \eta_{ij} )}$. In particular, we define the limit process $L_1(\,\cdot\,)$ of the functional $L_{1n}(\,\cdot\,)$ according to the value of the regular varying index $\alpha$ such that   
\begin{align}
\label{L1expression}
L_1(\,\cdot\,) = 
\begin{cases}
\displaystyle \sum_{ T_i \leq . } \sum_j P_i \eta_{ij}, & \alpha \in (0,1),
\\
\displaystyle \underset{ u \to 0 }{ \mathsf{lim} } \left( \sum_{ T_i \leq . } \sum_j P_i \eta_{ij} 1_{ \left\{ P_i \left| \eta_{ij} \right| > u  \right\} } - (\,\cdot\,) \int_{  u < \left| x \right| \leq 1 } x \mu (dx) \right), & \alpha \in [1,2),
\end{cases}
\end{align}
where $L_1(\,\cdot\,)$ is an $\alpha-$stable L\'{e}vy process with characteristic triple $( 0, \nu_1, \gamma_1 )$, where the limit in the latter case holds almost surely uniformly on $[0,1]$ (along some sub-sequence).  

Furthermore, we define the limit process $L_2(\,\cdot\,)$ of the functional $L_{2n}(\,\cdot\,)$ according to the value of the regular varying index $\alpha$ such that   
\begin{align}
L_2(\,\cdot\,) = 
\begin{cases}
\displaystyle \sum_{ T_i \leq . } \sum_j P^2_i \eta^2_{ij}, & \alpha \in (0,1),
\\
\displaystyle \underset{ u \to 0 }{ \mathsf{lim} } \left( \sum_{ T_i \leq . } \sum_j P^2_i \eta^2_{ij} 1_{ \left\{ P^2_i \left| \eta_{ij} \right| > u  \right\} } - (\,\cdot\,) \int_{  u < \left| x \right| \leq 1 } x^2 \mu (dx) \right), & \alpha \in [1,2),
\end{cases}
\end{align}
where $L_2(\,\cdot\,)$ is an $\alpha/2-$stable L\'{e}vy process with characteristic triple $( 0, \nu_2, \gamma_2 )$, where the limit in the latter case holds almost surely uniformly on $[0,1]$ (along some sub-sequence).  
\end{remark}

\begin{proof}
For the proof of Theorem \ref{stationary} see Appendix \ref{AppendixB} of the paper. 
\end{proof}

\begin{remark}
Notice that in the case when $\alpha \in [1,2)$, we assume that the contribution of the smaller increments of the partial sum process is closer to its expectation. The random variable $X_1^2$ has an infinite expectation, but when we truncate this random variable with an appropriate indicator then we obtain finite expectation such that the following expression is finite 
\begin{align}
\mathbb{E} \left( \frac{ X_1^2 }{ \alpha_n^2 } \boldsymbol{1}_{ \frac{| X_1 |}{ \alpha_n } \leq 1 } \right) \leq 1 < \infty.  
\end{align}
Moreover, for $\alpha \in [1,2)$ the functionals are suitable centered with the coefficients $b_{1n}$ and $b_{2n}$ respectively. Then, the corresponding convergence results for $\alpha \in [1,2)$ only hold for finite quantities. Intuitively, Theorem \ref{stationary} shows that the joint functional consisting of the partial sum process in the first coordinate and the partial sum squared process in the second coordinate satisfy a functional central limit theorem in the weak $M_1$ topology with the limit consisting of an $\alpha-$stable L\'evy process in the first coordinate and an $\alpha/2-$stable L\'evy process in the second coordinate. In order to prove this result one needs to obtain the analytical form of their characteristic triples.  
\end{remark}

\newpage

\subsection{Weak Convergence of Self-Normalized Partial Sum Processes}
\label{S:Selfnormalized}

In this Section we establish the weak convergence of Self-normalized Partial Sum Processes in the $M_1$ topology. We use the main results established in Section \ref{Section2} of the paper applied to the case of the self-normalized partial sum process to obtain a weak convergence result that holds in the $M_1$ topology under general conditions. Consider the function $\mathsf{g} \colon D([0,1], \mathbb{R}^{2}) \to \mathbb{R}^{2}$, defined by $\mathsf{g}(x)=x(1)$. It is continuous with respect to the weak $M_{1}$ topology on $D([0,1], \mathbb{R}^{2})$ (see Theorem 12.5.2 in Whitt~\cite{Whitt2002stochastic}). Therefore under the conditions given by Theorem~\ref{t:functconvergence}, the continuous mapping theorem yields $\mathsf{g} (L_{n}) \dto \mathsf{g} (L)$ as $n \to \infty$, which implies that 
\begin{align}
\bigg( \sum_{i = 1}^{n} \frac{X_{i}}{a_{n}},  \sum_{i = 1}^{n} \frac{X_{i}^{2}}{a_{n}^{2}} \bigg)
\dto  \bigg( \sum_{i} \sum_{j}P_{i}\eta_{ij}, \sum_{i} \sum_{j}P_{i}^{2}\eta_{ij}^{2} \bigg).    
\end{align}
In particular, it follows that 
\begin{align}
\frac{V_{n}}{a_{n}^{2}} = \sum_{i = 1}^{n} \frac{X_{i}^{2}}{a_{n}^{2}}  \dto \sum_{i} \sum_{j}P_{i}^{2}\eta_{ij}^{2},    
\end{align}
where the limiting random variable has a stable distribution.

\begin{theorem}
\label{t:functSN1}
Let $(X_{n})_{ n \in \mathbb{Z} }$ be a stationary sequence of random variables, jointly regularly varying with index $\alpha\in(0,1)$, and of which the tail process $(Y_{i})_{i \in \mathbb{Z}}$ almost surely has no two values of the opposite sign. Suppose that Conditions~\ref{cond1} and~\ref{c:mixcond2} hold. Then
\begin{align}
\frac{S_{\lfloor n\,\cdot \rfloor}}{V_{n}} \dto \frac{L_{1}(\,\cdot\,)}{\sqrt{L_{2}(1)}} \qquad \textrm{as} \ n \to \infty,
\end{align}
in $D([0,1], \mathbb{R})$ endowed with the $M_{1}$ topology, where $L = (L_{1}, L_{2})$ as given in Theorem~\ref{t:functconvergence}.
\end{theorem}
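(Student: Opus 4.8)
The plan is to deduce Theorem~\ref{t:functSN1} from the joint functional convergence $L_n \dto L$ of Theorem~\ref{t:functconvergence} by an application of the continuous mapping theorem, using the $M_1$-continuity of division established in Lemma~\ref{l:M1div}. First I would record the obvious normalizing identity
\begin{align}
\frac{S_{\lfloor n\,\cdot\,\rfloor}}{V_n}
= \frac{S_{\lfloor n\,\cdot\,\rfloor}/a_n}{V_n^2/a_n^2}\cdot\frac{V_n^2}{a_n^2}\cdot\frac{1}{V_n}
= \frac{L_{1n}(\,\cdot\,)}{\sqrt{L_{2n}(1)}},
\end{align}
since $V_n = \sqrt{V_n^2} = a_n\sqrt{L_{2n}(1)}$ and $S_{\lfloor n\,\cdot\,\rfloor}/a_n = L_{1n}(\,\cdot\,)$ in the $\alpha\in(0,1)$ parametrization where $b_{1n}=b_{2n}=0$. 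Thus the self-normalized process is the image of the pair $(L_{1n}, L_{2n})$ under the map $(x,y)\mapsto x/\sqrt{y(1)}$, and the entire argument reduces to showing this map is a.s.\ continuous at the limit $L=(L_1,L_2)$ with respect to the relevant topologies.

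The key steps, in order, are the following. (i) Factor the map as a composition: $(x,y)\mapsto (x, \tilde y)$ where $\tilde y(t) := \sqrt{y(1)}$ is the constant function, followed by the division map $h(x,\tilde y) = x/\tilde y$ of Lemma~\ref{l:M1div}. The map $y \mapsto y(1)$ is $M_1$-continuous by Theorem~12.5.2 in \cite{Whitt2002stochastic} (it is the evaluation map $\mathsf g$ already invoked above), and $z \mapsto \sqrt{z}$ is continuous on $[0,\infty)$, so $y \mapsto \tilde y$ is a continuous map into $C^{\uparrow}_0([0,1],\mathbb{R})$ — here one uses that $L_2(1) > 0$ almost surely, which holds because $L_2$ is a non-degenerate $\alpha/2$-stable subordinator-type process with the strictly positive Lévy measure $\nu_2$ from Theorem~\ref{t:functconvergence}, so its time-$1$ value is a.s.\ strictly positive, placing $\tilde y$ in the domain $C^{\uparrow}_0$ on which Lemma~\ref{l:M1div} guarantees continuity of $h$. (ii) Verify that $L=(L_1,L_2)$ a.s.\ lies in the continuity set of the composed map: the only thing to check beyond $L_2(1)>0$ is that the pair $(L_1, \sqrt{L_2(1)})$ avoids the bad set of Lemma~\ref{l:M1div}, but since the second coordinate is a (random) constant function it is continuous, has no discontinuities, and the sign condition on joint jumps in Lemma~\ref{l:M1div} is vacuous. (iii) Apply the continuous mapping theorem to $L_n \dto L$ in $D([0,1],\mathbb{R}^2)$ with the weak $M_1$ topology to conclude $S_{\lfloor n\,\cdot\,\rfloor}/V_n = h\big(L_{1n}, \sqrt{L_{2n}(1)}\big) \dto h\big(L_1,\sqrt{L_2(1)}\big) = L_1(\,\cdot\,)/\sqrt{L_2(1)}$ in $D([0,1],\mathbb{R})$ with the $M_1$ topology (on $D([0,1],\mathbb{R})$ the weak and strong $M_1$ topologies coincide, as noted in the Remark following the definition of the strong $M_1$ topology).

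The main obstacle I anticipate is step (ii), specifically the careful justification that the continuity set of the composed functional has full probability under the law of $L$. The subtlety is that Lemma~\ref{l:M1div} asserts continuity of $h$ only on $D([0,1],\mathbb{R})\times C^{\uparrow}_0([0,1],\mathbb{R})$, i.e.\ when the denominator is continuous, positive, and nondecreasing; feeding it the genuinely path-valued $L_{2n}$ would fail, so the reduction to the \emph{constant} function $\sqrt{L_{2n}(1)}$ via the evaluation map is essential and must be done before invoking the lemma. One must also confirm that the joint convergence $(L_{1n}(\,\cdot\,), L_{2n}(1)) \dto (L_1(\,\cdot\,), L_2(1))$ holds — this follows from $L_n \dto L$ composed with the continuous map $(x,y)\mapsto (x, y(1))$ — rather than merely the marginal convergences, since the continuous mapping argument needs the joint law. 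A minor additional point is ensuring $\mathbb{P}(L_2(1) > 0) = 1$ rigorously: this is immediate from the fact that $L_2$ is $\alpha/2$-stable with $\alpha/2 \in (0,1)$ and Lévy measure concentrated on $(0,\infty)$, hence a strictly increasing pure-jump process, so $L_2(1) > L_2(0) = 0$ almost surely. Once these points are in place the rest is a routine continuous-mapping deduction.
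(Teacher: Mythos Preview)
Your proposal is correct and follows essentially the same approach as the paper: both arguments deduce the result from the joint convergence of $(L_{1n},L_{2n})$ via the continuous mapping theorem and the $M_1$-continuity of division in Lemma~\ref{l:M1div}, after reducing the denominator to the positive constant function $t\mapsto \sqrt{L_{2}(1)}$. The only cosmetic difference is that the paper builds the constant second coordinate directly into a modified summation functional $\widetilde{\Phi}^{(u)}$ (whose second component sums over all $i$, not just $t_i\le t$) and then repeats the $\Phi^{(u)}$-argument of Theorem~\ref{t:functconvergence}, whereas you invoke Theorem~\ref{t:functconvergence} as stated and then post-compose with the $M_1$-continuous evaluation map $y\mapsto y(1)$; both routes land on the same continuous-mapping conclusion.
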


\begin{proof}
The proof of Theorem \ref{t:functSN1} is shown in Appendix \ref{AppendixB} of the paper. 
\end{proof}

\begin{theorem}
\label{t:functSN2}
Let $(X_{n})_{ n \in \mathbb{Z} }$ be a stationary sequence of random variables, jointly regularly varying with index $\alpha\in[1,2)$, and of which the tail process $(Y_{i})_{i \in \mathbb{Z}}$ almost surely has no two values of the opposite sign. Suppose that Conditions~\ref{cond1} and~\ref{c:mixcond2} hold. Then
\begin{align}
\frac{S_{\lfloor n\,\cdot \rfloor}}{V_{n}} \dto \frac{L_{1}(\,\cdot\,)}{\sqrt{L_{2}(1)}} \qquad \textrm{as} \ n \to \infty,
\end{align}
in $D([0,1], \mathbb{R})$ endowed with the $M_{1}$ topology, where $L = (L_{1}, L_{2})$ as given in Theorem~\ref{t:functconvergence}.
\end{theorem}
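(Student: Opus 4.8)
\textbf{Proof proposal for Theorem \ref{t:functSN2}.}

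The plan is to mirror the argument for Theorem \ref{t:functSN1}, replacing the use of Theorem \ref{t:functconvergence} with Theorem \ref{stationary}, which supplies the joint functional convergence $L_{n} \dto L$ in $D([0,1], \mathbb{R}^{2})$ with the weak $M_{1}$ topology in the regime $\alpha \in [1,2)$. First I would record that, under Conditions~\ref{cond1} and~\ref{cond2} (and hence under the hypotheses stated, noting that Condition~\ref{c:mixcond2} is used to obtain the tail-process / extremal-index structure underlying \eqref{e:BaTa}), the centered process $L_n = (L_{1n}, L_{2n})$ with centerings $b_{1n}, b_{2n}$ of \eqref{bn1}--\eqref{bn2} converges to $L = (L_1, L_2)$, with $L_1$ an $\alpha$-stable and $L_2$ an $\alpha/2$-stable L\'evy process. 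Then I would pass the second coordinate through the evaluation map $\mathsf{g}(x) = x(1)$, which is continuous for the weak $M_1$ topology (Theorem 12.5.2 in \cite{Whitt2002stochastic}), to get $L_{2n}(1) \dto L_2(1)$; since $L_2$ is an $\alpha/2$-stable L\'evy process supported on the positive half-line (its L\'evy measure $\nu_2$ is concentrated on $(0,\infty)$ and $\gamma_2 > 0$), the limit $L_2(1)$ is a strictly positive, a.s.\ finite random variable, so $\sqrt{L_2(1)}$ is well defined and positive almost surely.

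Next I would reduce the self-normalized ratio to a continuous functional of $L_n$. Write
\begin{align*}
\frac{S_{\lfloor n\,\cdot\rfloor}}{V_n}
= \frac{ S_{\lfloor n\,\cdot\rfloor}/a_n }{ V^2_n / a_n }
= \frac{ L_{1n}(\,\cdot\,) + \lfloor n\,\cdot\rfloor b_{1n} }{ L_{2n}(1) + n\, b_{2n} }.
\end{align*}
The key point for $\alpha \in [1,2)$ is to control the deterministic centering terms. For the denominator, $b_{2n} = \mathbb{E}[ (X_1^2/a_n^2)\mathbf{1}_{\{X_1^2/a_n^2 \le 1\}}]$; by regular variation of $|X_1|$ with index $\alpha < 2$ together with Karamata's theorem and $n\mathbb{P}(|X_1| > a_n) \to 1$, the quantity $n\,b_{2n}$ converges to a finite constant, which is precisely absorbed into $\gamma_2$ so that $L_{2n}(1) + n\,b_{2n} \dto L_2(1)$ with the \emph{uncentered} stable limit — indeed this is already implicit in the displayed convergence $V_n/a_n^2 \dto \sum_i\sum_j P_i^2\eta_{ij}^2$ preceding Theorem~\ref{t:functSN1}, and one checks it carries over verbatim. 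For the numerator, when $\alpha \in (1,2)$ one has $\mathbb{E}|X_1| < \infty$, and $n\,b_{1n} = n\,\mathbb{E}[(X_1/a_n)\mathbf{1}_{\{|X_1|/a_n \le 1\}}]$ converges to a finite constant $\beta_1$ (again by Karamata), so that $L_{1n}(\,\cdot\,) + \lfloor n\,\cdot\rfloor b_{1n} \dto L_1(\,\cdot\,) + \beta_1(\,\cdot\,)$, a process that differs from an $\alpha$-stable L\'evy process only by a deterministic linear drift and hence is itself an $\alpha$-stable L\'evy process (with shifted $\gamma_1$); the boundary case $\alpha = 1$ requires the analogous but more delicate truncation bookkeeping, which is exactly what the explicit formula $\gamma_1 = \frac{\alpha}{\alpha-1}(p - q - \theta(c_+ - c_-))$ in Theorem~\ref{stationary} encodes. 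After this bookkeeping, both coordinates of the pair $\big( L_{1n}(\,\cdot\,) + \lfloor n\,\cdot\rfloor b_{1n},\, L_{2n}(1) + n\,b_{2n} \big)$ jointly converge in $D([0,1],\mathbb{R}) \times \mathbb{R}$ to $\big( L_1(\,\cdot\,), L_2(1) \big)$ (after re-identifying the drift as part of $L_1$), where here the $\mathbb{R}$-component convergence is joint with the path-valued one because it is obtained from the same $L_n$ via the continuous map $x \mapsto (x_1, x_2(1))$.

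Finally I would apply the division lemma. On the event $\{L_2(1) > 0\}$, which has probability one, the map $(x, c) \mapsto x/\sqrt{c}$ from $D([0,1],\mathbb{R}) \times (0,\infty)$ to $D([0,1],\mathbb{R})$ is continuous in the weak (equivalently, since $d=1$, the standard) $M_1$ topology — this is the content of Lemma~\ref{l:M1div} applied with the constant function $y \equiv \sqrt{L_2(1)} \in C^{\uparrow}_0([0,1],\mathbb{R})$, together with continuity of $c \mapsto \sqrt{c}$. An application of the continuous mapping theorem then yields
\begin{align*}
\frac{S_{\lfloor n\,\cdot\rfloor}}{V_n}
= \frac{ L_{1n}(\,\cdot\,) + \lfloor n\,\cdot\rfloor b_{1n} }{ \sqrt{ L_{2n}(1) + n\,b_{2n} } } \cdot \frac{1}{\sqrt{(L_{2n}(1)+n b_{2n})}} \Big/ \frac{1}{\sqrt{L_{2n}(1)+ nb_{2n}}}
\ \dto\ \frac{L_1(\,\cdot\,)}{\sqrt{L_2(1)}}
\end{align*}
(the intermediate rewriting is purely formal; the substantive step is the continuous mapping argument), which is the claimed convergence in $D([0,1],\mathbb{R})$ with the $M_1$ topology, and the limit is an $\alpha$-stable L\'evy process $\mathcal{L}_\alpha(\,\cdot\,)$ in the sense that $L_1$ is $\alpha$-stable and $L_2(1)$ is an independent-of-nothing but fixed positive stable scaling. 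The main obstacle I anticipate is the centering analysis for the numerator in the critical case $\alpha = 1$: one must show that the deterministic drift $\lfloor n\,\cdot\rfloor b_{1n}$ converges (not merely that it is bounded) and identify its limit so that the sum $L_{1n} + \lfloor n\,\cdot\rfloor b_{1n}$ has a genuine L\'evy-process limit with the correct $\gamma_1$; secondarily, one must verify that the joint convergence of the path-valued numerator and the scalar denominator is genuinely joint (which it is, being a continuous image of the single converging object $L_n$) so that the continuous mapping theorem for the ratio applies — a point where invoking Lemma~\ref{l:M1div} with the limiting $\sqrt{L_2(1)}$ as a random but path-constant second argument needs the standard Skorokhod-representation or Slutsky-type care.
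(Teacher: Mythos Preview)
Your approach differs from the paper's in a substantive way. The paper does not attempt to control the \emph{uncentered} ratio $S_{\lfloor n\cdot\rfloor}/V_n$; it works entirely with the centered pair $\widetilde L_n(t) = \big(L_{1n}(t),\,L_{2n}(1)\big)$ (the second coordinate evaluated at $1$ via a modified summation functional $\widetilde\Phi^{(u)}$), proves $\widetilde L_n \dto \widetilde L$ along the lines of Theorem~\ref{stationary}, and then applies the continuous mapping theorem to $g(x,y)=x/\sqrt y$ on $D([0,1],\mathbb R)\times C^{\uparrow}_0([0,1],\mathbb R)$. The limit actually displayed in the paper's proof is therefore the \emph{centered} self-normalized process $L_{1n}(\cdot)/\sqrt{L_{2n}(1)}$, not the literal uncentered $S_{\lfloor n\cdot\rfloor}/V_n$ of the theorem statement. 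Your proposal, by contrast, tries to add back the deterministic centerings $\lfloor n\cdot\rfloor b_{1n}$ and $n\,b_{2n}$ and absorb them into the limit as drifts; this is more faithful to the theorem as written but is genuinely harder, and the paper simply does not take this route.

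There is a real gap in your drift argument for the numerator. Your Karamata step for the denominator is fine: since $|X_1|$ is regularly varying with index $\alpha<2$, one has $n\,b_{2n}\to \alpha/(2-\alpha)$, a finite constant. But the parallel claim for $n\,b_{1n}$ is not justified. For $\alpha\in(1,2)$ one has $a_n$ regularly varying of index $1/\alpha$, so $n/a_n\to\infty$, and
\[
n\,b_{1n}=\frac{n}{a_n}\,\mathbb E\bigl[X_1\mathbf 1_{\{|X_1|\le a_n\}}\bigr]
\]
diverges unless $\mathbb E[X_1]=0$ (only then does a second Karamata step on $-\mathbb E[X_1\mathbf 1_{\{|X_1|>a_n\}}]$ yield a finite limit). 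No zero-mean or symmetry assumption is among the hypotheses, so the step ``$L_{1n}(\cdot)+\lfloor n\cdot\rfloor b_{1n}\dto L_1(\cdot)+\beta_1(\cdot)$'' fails in general, and with it the whole reduction. (There is also an algebra slip in your first display: $S_{\lfloor n\cdot\rfloor}/V_n = (S_{\lfloor n\cdot\rfloor}/a_n)/\sqrt{V_n^2/a_n^2}$, not divided by $V_n^2/a_n$.) The paper sidesteps all of this precisely by never un-centering; if you want to follow its line, apply $g$ directly to the centered pair and accept that the object shown to converge is $L_{1n}(\cdot)/\sqrt{L_{2n}(1)}$.
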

\begin{proof}
The proof of Theorem \ref{t:functSN2} is shown in Appendix \ref{AppendixB} of the paper. 
\end{proof}
Both limit processes $L_1$ and $L_2$ denote independent strictly $\alpha-$stable and $\alpha / 2-$stable random variables respectively. Thus, our aim is to prove that the limit $L_{1}(\,\cdot\,) \big/ \sqrt{L_{2}(1)}$, that joint weak convergence induces in Theorem \ref{t:functSN1} and \ref{t:functSN2}, gives an $\alpha-$stable random variable. 

\newpage

\section{Main Applications}
\label{Section4}

In this section, we establish the weak convergence of joint functionals to the space $D[(0,1)]$ equipped with the $M_1$ topology for various processes. Our aim is to provide applications in which the finite-dimensional distributions of self-normalized partial-sum processes converge to those of a strictly $\alpha-$stable process and the partial-sum process is tight. We focus on demonstrating that the weak convergence of self-normalized processes of joint functionals of random variables induced by time series models hold to the $M_1$ topology based on our theoretical results. Thus, weak convergence arguments are essential for determining the limit behaviour of estimators and related test statistics that hold for sequences with a regular variation index $\alpha \in (0,2)$. In other words, we focus on establishing the weak convergence of the joint functional $L_n(t)$:  
\begin{align}
L_{n}(t) =  \left( \sum_{k=1}^{[nt]} \frac{X_{k}}{a_{n}}, \sum_{k=1}^{ \lfloor nt \rfloor}\frac{X_{k}^{2}}{a_{n}^{2}} \right),  \qquad t \in [0,1],  
\end{align}
for $\alpha \in (0,1)$ or $\alpha \in [1,2)$, where the stochastic process $\left( X_k \right)_{k \in \mathbb{N}}$ corresponds to the  stationary, regularly varying sequence induced from a regression model. The main idea here is to show that these results are consistent with the results provided by BKS for processes from a regularly varying distribution with index $\alpha \in (0,2)$ when one considers the weak convergence of the self-normalized partial sum functionals with appropriate centering and normalizing sequences.

We focus on time series processes for which the weak convergence of self-normalized partial sum processes do not converge in the standard $J_1$ topology. The particular weak convergence result is strong in the sense that it holds regardless whether clustering of only values of the same sign occurs in the limit process. In other words, the $J_1$ topology is appropriate when large values of $X_n$ do not cluster, while in the contrary existence of clusters of large values implies that $J_1$ convergence fail to hold (see, \cite{basrak2010functional}, \cite{krizmanic2016functional}). On the other hand, when considering the joint weak convergence of functionals within the $M_1$ topology related asymptotic theory results can be established. Therefore, we consider the weak convergence of such joint functionals to the $M_1$ topology, which is more appropriate in the presence of clusters of extreme values. Although, we have not mention any related mixing conditions, we mainly focus in the case of weak dependence and joint regular variation for the sequence $\left( X_k \right)_{k \in \mathbb{N}}$.  

\begin{example}[Linear process]

Let $\left( Z_i \right)_{ i \in \mathbb{Z} }$ be an \textit{i.i.d} regularly varying sequence with index $\alpha \in (0,2)$. Consider the following linear process for the \textit{i.i.d} sequence $\left( X_i \right)_{ i \in \mathbb{Z} }$  such that
\begin{align}
X_i = \sum_{j=0}^{ \infty } \varphi_j Z_{i-j}, \ \ \ \ k \in \mathbb{Z}, 
\end{align}
where the sequence of real numbers $( \varphi_j  )$ is such that 
\begin{align}
\sum_{ j = 0 }^{ \infty } \left| \varphi_j  \right|^{ \beta } < \infty \ \ \ \ \text{for some} \ \ 0 < \beta < \alpha,  \ \beta \leq 1,  \ \ \ \text{such that} \ \ \ \sum_{ j = 0 }^{ \infty } \varphi_j \neq 0.  
\end{align}

\newpage

Moreover, it holds that 
\begin{align}
\underset{ x \to \infty }{ \mathsf{lim} }  \frac{ \mathbb{P} \left( | X_0 | > x \right) }{ \mathbb{P} \left( | Z_0 | > x \right)  } = \sum_{j=0}^{\infty} \left| \varphi_j \right|^{\alpha}, \ \ \alpha \in (0,2). 
\end{align}
Then, the tail process $\left( Y_i \right)_{ k \in \mathbb{N} }$ of the linear process $\left( X_i \right)$ is of the following form:
\begin{align}
Y_i = \frac{ \varphi_{n+ M } }{ \left| \varphi_{M} \right| } | Y_0 | \Theta_i^{ Z_i  }, \ \ \ n \in \mathbb{Z},    
\end{align}
where $J$ is an $\left\{ -1, 1 \right\}$-valued random variable such that $\mathbb{P} \left( \Theta_i^{ Z_i } = 1 \right) = p$ and $\mathbb{P} \left( \Theta_i^{ Z_i } =  0 \right) = q$ for all $i \in \left\{ 1,.., n \right\}$ and $M$ is an integer valued random variable with probability distribution \begin{align}
\mathbb{P} \big( M = m \big) = \frac{ | \varphi_j |^{\alpha} }{ \sum_{i=0}^{\infty} | \varphi_j |^{\alpha}  }, \ m \geq 0.     
\end{align}
All the above assumptions ensure that the tail process has no two values of the opposite sign, which ensures that the theoretical conditions of our framework are satisfied. Thus, if we consider the corresponding finite order linear process, we can apply the main theory since in this case all conditions of our theorems are satisfied. 

Hence, for the truncated linear process: $X_i = \sum_{j=0}^m \varphi_j Z_{i-j}$ it holds that,  
\begin{align}
L_1(\,\cdot\,) = 
\begin{cases}
\displaystyle \frac{\sum_{j=0}^m \varphi_j }{ \kappa }  \sum_{ T_i \leq . } P_i \Theta_i^{ Z_i  }, & \alpha \in (0,1),
\\
\displaystyle \underset{ u \to 0 }{ \mathsf{lim} } \left( \frac{1}{\kappa}  \sum_{ T_i \leq . } P_i \Theta_i^{ Z_i  } \sum_{j=0}^m \varphi_j \mathbf{1}_{ \left\{ P_i \left| \varphi_j \right| / \kappa \right\} } - (\,\cdot\,) \int_{  u < \left| x \right| \leq 1 } x \mu (dx) \right), & \alpha \in [1,2),
\end{cases}
\end{align}
where $L_{1n}(t) := \frac{ 1 }{ a_n } \sum_{i=1}^{ \floor{nt} } X_i$ and $X_i = \sum_{j=0}^m \varphi_j Z_{i-j}$.

Therefore, we define with 
\begin{align}
L_2(\,\cdot\,) = 
\begin{cases}
\displaystyle \frac{\sum_{j=0}^m \varphi_j }{ \kappa }  \sum_{ T_i \leq . } P^2_i \left( \Theta_i^{ Z_i  } \right)^2, & \alpha \in (0,1),
\\
\displaystyle \underset{ u \to 0 }{ \mathsf{lim} } \left( \frac{1}{\kappa}  \sum_{ T_i \leq . } P^2_i \left( \Theta_i^{ Z_i  } \right)^2 \sum_{j=0}^m \varphi_j \mathbf{1}_{ \left\{ P^2_i \left| \varphi_j \right| / \kappa \right\} } - (\,\cdot\,) \int_{  u < \left| x \right| \leq 1 } x^2 \mu (dx) \right), & \alpha \in [1,2),
\end{cases}
\end{align}
where $L_{2n}(t) := \frac{ 1 }{ a^2_n } \sum_{i=1}^{ \floor{nt} } X^2_i$ and $X^2_i = \left( \sum_{j=0}^m \varphi_j Z_{i-j} \right)^2$.
\end{example}

Then, based on Theorem \ref{t:functSN1} and Theorem \ref{t:functSN2}, it holds that
\begin{align*}
\frac{ L_{ 1n}(t)  }{L_{2n}(1)}  
&\overset{\mathsf{def}}{= } 
\frac{ \displaystyle \frac{ 1 }{ a_n } \sum_{i=1}^{ \floor{nt} } X_i }{  \displaystyle  \frac{ 1 }{ a^2_n } \sum_{i=1}^{ n } X^2_i  }
= 
\frac{ \displaystyle \frac{ 1 }{ a_n } \sum_{i=1}^{ \floor{nt} } \sum_{j=0}^m \varphi_j Z_{i-j} }{  \displaystyle  \frac{ 1 }{ a^2_n } \sum_{i=1}^{ n } \left( \sum_{j=0}^m \varphi_j Z_{i-j} \right)^2  }
\end{align*}

\newpage

\begin{align*}
&\dto
\begin{cases}
\frac{  \displaystyle \frac{\sum_{j=0}^m \varphi_j }{ \kappa }  \sum_{ T_i \leq . } P_i \Theta_i^{ Z_i  }  }{  \displaystyle  \frac{\sum_{j=0}^m \varphi_j }{ \kappa }  \sum_{ T_i \leq . } P^2_i \left( \Theta_i^{ Z_i  } \right)^2  },  & \ \text{if} \ \alpha \in (0,1)     
\\
\\
\frac{ \displaystyle \underset{ u \to 0 }{ \mathsf{lim} } \left( \frac{1}{\kappa}  \sum_{ T_i \leq . } P_i \Theta_i^{ Z_i  } \sum_{j=0}^m \varphi_j \mathbf{1}_{ \left\{ P_i \left| \varphi_j \right| / \kappa \right\} } - t \int_{  u < \left| x \right| \leq 1 } x \mu (dx) \right) }{ \left\{ \displaystyle \underset{ u \to 0 }{ \mathsf{lim} } \left( \frac{1}{\kappa}  \sum_{ T_i \leq . } P^2_i \left( \Theta_i^{ Z_i  } \right)^2 \sum_{j=0}^m \varphi_j \mathbf{1}_{ \left\{ P^2_i \left| \varphi_j \right| / \kappa \right\} } - n \int_{  u < \left| x \right| \leq 1 } x^2 \mu (dx) \right)  \right\}^{1/2} },  & \ \text{if} \ \alpha \in [1,2)  \end{cases}
\\
&\equiv
\frac{L_{1}(\,\cdot\,)}{\sqrt{L_{2}(1)}}, \qquad \textrm{as} \ n \to \infty.
\end{align*}
\begin{remark}
Our first application considers a linear process which is a standard application in the literature (see also \cite{basrak2010functional}). Although, a linear process is considered a special case in which the $\beta-$mixing condition doesn't hold, our fundamental framework which corresponds to more general dependence structures encompasses the case of linear processes. Specifically, for this example, we focus on implementing our main results in order to establish the weak convergence of self-normalized partial sum processes of joint functionals equipped with the $M_1$ topology. On the other hand, in time series regression settings asymptotic independence conditions such as mixing conditions are usually proposed to replace independence, among which $\beta-$mixing is an important dependent structure and has been connected with a large class of time series models including ARMA models and GARCH models. 
\end{remark}

\begin{example}[Garch(1,1) process]

Consider the standard Garch model (see, \cite{bollerslev1986generalized}) 
\begin{align}
X_k = \sigma_k Z_k, \ \ \ k \in \mathbb{Z}.   
\end{align}
where $\left( Z_k \right)_{ k \in \mathbb{N} }$ is a sequence of \textit{i.i.d} random variables with $\mathbb{E}(Z_1) = 0$ and $\mathbb{E}(Z^2_1) = 1$ such that 
\begin{align}
\sigma_k^2 = \omega + \left( a_1 Z_{k-1}^2 + b_1 \right) \sigma_{k-1}^2, \ \ \ k \in \mathbb{Z}. 
\end{align}
where $a_1$ and $b_1$ are non-negative parameters ensuring the non-negativity of the squared volatility process $(\sigma_k^2)$. Moreover, we assume that $Z_1$ is symmetric, has a positive Levesgue density on $\mathbb{R}$ and there exists $\alpha \in (0,2)$ such that 
\begin{align}
\mathbb{E} \big[ \left( a_1 Z_1^2 + b_1 \right)^{\alpha} \big] = 1 \ \ \ \text{and} \ \ \ \mathbb{E} \big[ \left( a_1 Z_1^2 + \beta_1 \right)^{\alpha} \mathsf{ln} \left( a_1 Z_1^2 + b_1 \right) \big] < \infty.    
\end{align}
In other words, based on the well-known property that under fairly weak conditions on the distributions of the noise $(Z_t)$, its finite-dimensional distributions are regularly varying (see, \cite{mikosch2002whittle}) and thus satisfies the main conditions we impose in our framework.

\newpage

Specifically, it is known that the processes $\left( \sigma_k^2 \right)$ and $\left( X_k^2 \right)$ are regularly varying with index $\alpha$ and strongly mixing geometric rate. In particular, 
\begin{align}
\mathbb{P} \left( X_1^2 > x \right) \sim \mathbb{E} \left| Z_1 \right|^{2 \alpha} \mathbb{P} \left( \sigma_1^2 > x \right) \sim \mathbb{E} \left| Z_1 \right|^{ 2 \alpha } c_1 x^{- \alpha}, \ \ \text{as} \ \ x \to \infty.   
\end{align}

\end{example}

\begin{remark}
Garch(1,1) processes are stochastic volatility models used in the statistics and econometrics literature for modelling conditional volatility. Garch processes have a main advantage to Arch-type processes in capturing clusters of volatility or clusters of extreme values. Moreover, heavy-tailed Garch processes are found to be robust to capture volatility clustering effects in the literature. Therefore, in order for the conditions we impose in our framework to be satisfied, for the remaining derivations of this example, we consider partial sum processes of squared Garch(1,1) processes.    
\end{remark}

\medskip

\begin{example}
Consider the squared GARCH(1,1) process can be embedded in the 2-dimensional stochastic recurrence equation (SRE) such that 
\begin{align}
\boldsymbol{X}_k = \boldsymbol{A}_k \boldsymbol{X}_{k-1} + \boldsymbol{E}_k    
\end{align}
where 
\begin{align}
\boldsymbol{X}_k = 
\begin{pmatrix}
X_k^2
\\
\sigma_k^2
\end{pmatrix}, \ \ \
\boldsymbol{A}_k = 
\begin{pmatrix}
a_1 Z_k^2 \ & \ b_1 Z_k^2
\\
a_1 \ & \ b_1
\end{pmatrix}, \ \ \
\boldsymbol{X}_k = 
\begin{pmatrix}
\omega Z_k^2
\\
\omega
\end{pmatrix}.
\end{align}
Since the process $\left( X_k \right)$ is nonnegative, its tail process cannot have two values of the opposite sign. Moreover, the conditional volatility sequence $( \sigma_k^2 )_{ k \in \mathbb{Z} }$ has all elements of its time series observations being positive. Consequently, the vector process $\boldsymbol{X}_k$ satisfies the no sign switching condition (see, \cite{basrak2010functional}). Thus, we consider the weak convergence of the partial sum process $V_n(.)$ for the cases when $\alpha \in (0,1)$ and when $\alpha \in [1,2)$. 

The partial sum process $ \boldsymbol{L}_{1n} :=  V_n(.)$ is defined as below
\begin{align}
\label{par.proc}
\boldsymbol{V}_n(t) = \sum_{k=1}^{ \floor{nt} } \frac{ \boldsymbol{X}_k^2}{\alpha_n} - \floor{nt} \mathbb{E} \left[ \frac{ \boldsymbol{X}_1^2 }{\alpha_n} \mathbf{1}_{ \left\{ \frac{ \boldsymbol{X}_1^2 }{\alpha_n} \leq 1 \right\}} \right], \ t \in [0,1]    
\end{align}

Note that the partial sum process $\boldsymbol{V}_n(t)$ given by expression \eqref{par.proc} converges in distribution to an $\alpha-$stable L\'evy process $V(.)$, with a characteristic triple $\left( 0, \nu, b \right)$, in $D[0,1]$ equipped with the $M_1$ topology since it satisfies the Assumptions of the main theory of the paper.  Moreover, since $\boldsymbol{X}_k = \big( X_k^2, \sigma^2_k \big)^{\prime}$ is a vector stochastic process, for $k \in \left\{ 1,..., n \right\}$, the partial sum process $\boldsymbol{V}_n(t)$ can be expressed in the following matrix form

\newpage

\begin{align}
\label{par.proc}
\boldsymbol{L}_{1n} := \boldsymbol{V}_n(t) 
= 
\begin{pmatrix}
\displaystyle  \frac{1}{a_n} \sum_{k=1}^{ \floor{nt} }  X_k^2   
\\
\\
\displaystyle \frac{1}{a_n} \sum_{k=1}^{ \floor{nt} }   \sigma_k^2   
\end{pmatrix}
- 
\floor{nt} 
\begin{pmatrix}
\displaystyle \frac{1}{a_n} \mathbb{E} \left[ X_1^2 \mathbf{1}_{ \left\{ \frac{ \boldsymbol{X}_1^2 }{\alpha_n} \leq 1 \right\}} \right]   
\\
\\
\displaystyle \frac{1}{a_n} \mathbb{E} \left[ \sigma_1^2 \mathbf{1}_{ \left\{ \frac{ \sigma_1^2 }{\alpha_n} \leq 1 \right\}} \right]    
\end{pmatrix}
, \ t \in [0,1]   
\end{align}
Similarly, we define the partial-sum process $\boldsymbol{W}_n(t)$ as below
\begin{align}
\label{par.proc}
\boldsymbol{L}_{2n}  := \boldsymbol{W}_n(t) 
= 
\begin{pmatrix}
\displaystyle \frac{1}{a^2_n}  \sum_{k=1}^{ \floor{nt} } X_k^4   
\\
\\
\displaystyle \frac{1}{a^2_n} \sum_{k=1}^{ \floor{nt} } \sigma_k^4   
\end{pmatrix}
- 
\floor{nt} 
\begin{pmatrix}
\displaystyle \frac{1}{a^2_n} \mathbb{E} \left[ X_1^4 \mathbf{1}_{ \left\{ \frac{ \boldsymbol{X}_1^4 }{\alpha_n} \leq 1 \right\}} \right]   
\\
\\
\displaystyle \frac{1}{a^2_n} \mathbb{E} \left[ \sigma_1^4 \mathbf{1}_{ \left\{ \frac{ \sigma_1^4 }{\alpha_n} \leq 1 \right\}} \right]    
\end{pmatrix}
, \ t \in [0,1]   
\end{align}
Furthermore, by applying the main results of the paper we can consider the joint functional convergence of the corresponding self-normalized partial sum process such that 
\begin{align}
\frac{ \boldsymbol{L}_{ 1 \lfloor n t \rfloor} }{   \boldsymbol{L}_{2n}(\, t \,)  } \dto \frac{ \boldsymbol{L}_{1}(\, t \,)}{\sqrt{\boldsymbol{L}_{2}(\, 1 \,)}} \qquad \textrm{as} \ n \to \infty.
\end{align}
\end{example}

\begin{remark} The assumption that the tail process has no two values of the opposite sign is crucial to obtain weak convergence of the partial sum process in the $M_1$ topology. One of the restrictions of the particular topology is that it can handle several instantaneous jumps only if they are in the same direction. On the other hand, there appear to be some ways of omitting the no sign switching condition. A possibility is to avoid the within-cluster fluctuations in the partial sum process, by smoothing out its trajectories or by considering the process $t \mapsto S_{r_n \floor{k_n t }}$, which then implies that convergence holds in the stronger $J_1$ topology. Our main results build on the framework of BKS to establish the weak convergence of self-normalized partial sum processes within the more restrictive $M_1$ topology (see,  \cite{krizmanic2016functional, krizmanic2018joint, krizmanic2020joint}).
\end{remark}
 
Therefore, the above illustrate theoretical examples show cases in which the weak convergence of the partial-sum process, using its corresponding characteristic function representation, holds in the $D[0,1]$ space equipped with the $M_1$ topology. Similarly, one can prove that the second coordinate of the functional weakly converges to the $M_1$ topology which allows us to consider the joint weak convergence of the self-normalized partial-sum processes under investigation. To do this, we need to determine the exact values of the characteristic triple that correspond to both coordinates of the joint functional process. Specifically, for cases in which the standard $J_1$ topology fails to apply, we are interested to establish joint weak convergence in the $M_1$ topology which although is more restrictive it allows to obtain asymptotic results for extreme models and cluster-based processes which are more likely to have underline regular varying processes that satisfy the aforementioned theoretical results derived in this paper.

\newpage

\subsection{Application to Systemic Risk Measures}

\medskip

Systemic risk measures are commonly used for measuring stability in complex financial systems. 
These time series used for the purpose of evaluating systemic risk measures exhibit features such as heavy-tails and asymptotic tail independence which means that extreme values are less likely to occur simultaneously. Moreover, while the notion of asymptotic independence is  well studied in the bivariate case, it is less developed in a multivariate setting. Therefore, our aim is to define a notion which captures the global joint concurrent tail behavior of random vectors portrayed by many popular multivariate dependence structures which are amenable to extensions to any general dimensions. To do this we employ the Marshall-Olkin notion of dependence which allows to capture the tail dependence structure in multivariate time series jointly. 

In this direction, we can show that a multivariate time series vector of regularly varying sequences with Gaussian copula, mutual asymptotic independence holds only under certain restrictions on the correlation matrix; on the other hand, pairwise asymptotic independence holds for any choice of correlation matrix which has all pairwise correlations
less than one. Modelling heavy-tail time series is done based on the framework of multivariate regular variation which permits to compute tail probabilities even in the case of mutual or even pairwise asymptotic independence. 

\begin{definition}
A random vector $\boldsymbol{Z} \in \mathbb{R}^d$ is multivariate regularly varying on the topological space $\mathbb{E}_d^{(i)}$ if there exists a regularly varying function $b_i$ and a non-empty Borel measure $\mu_i$ which is finite on Borel sets bounded away from $\left\{  \boldsymbol{z} \in \mathbb{R}_{+}^d     \right\}$ such that \begin{align}
\mathbb{P} \left(  \frac{ \boldsymbol{Z} }{ b_i(t)  }  \in B    \right) = \mu_i(B).
\end{align}

\end{definition}

\newpage

\section{Conclusion}
\label{Section5}

\medskip

According to \cite{basrak2010functional}, for a stationary, regularly varying sequence for which clusters of high-thredshold excesses can be broken down into asymptotically independent blocks, the properly centered partial sum process $\left( V_n (t) \right)_{ t \in [0,1] }$ converges to an $\alpha-$stable L\'{e}vy process in the space $D[0,1]$ endowed with Skorokhod's $M_1$ metric under the condition that all extremes within one such cluster have the same sign. Therefore, the main proofs and applications in this paper are extending the particular conjecture to the case of self-normalized partial sum processes. 

In general, the $M_1$ topology is considered to be weaker than the more commonly used Skorohod $J_1$ topology, the $M_1$ topology allows to consider the case in which weak convergence has to be established under the presence of clustering of extreme values. This particular effect although is examined from the perspective of stationary stochastic processes, interesting questions arise in the case of nonstationary time series processes under the presence of clustering. Therefore, further research includes extensions to nonstationary sequences which is a special case. In that case, one would need to first of all check that the corresponding conditions and definitions (e.g., tail index of the sequence etc), hold and then prove the general case of the theorem. Then and iff the particular theorem holds we can apply the equivalent representation of self-normalized partial sum processes to nonstationary sequences without worrying that the weak convergence argument will not hold. Further interesting extensions of our work include among other cases such as when one considers joint weak convergence of empirical processes of cluster functionals (see, \cite{engelke2022graphical} and \cite{drees2010limit}). 

\bigskip

\paragraph{Acknowledgements}

I am grateful and indebted to Associate Professor Danijel Krizmanic from the Department of Mathematics, University of Rijeka for providing guidance throughout the development of this research project. Moreover, he provided the main steps for the proof in the case where $\alpha \in (0,1)$ which helped me establish the case $\alpha \in [1,2)$.

\newpage

\begin{appendix}

\section{Proofs of Main Results in Section 2}
\label{AppendixA}

\subsection{Proof of Lemma \ref{Lemma1}}

\begin{proof}
Since a continuous function on a compact set is bounded and attains its minimum value it holds that 
\begin{align}
 C^{\uparrow}_{0}([0,1], \mathbb{R}) = \bigcup_{k=1}^{\infty}C^{\uparrow}_{k}([0,1], \mathbb{R}),    
\end{align}
where $C^{\uparrow}_{k}([0,1], \mathbb{R})$ is the subset of functions $x$ in $C^{\uparrow}_{0}([0,1], \mathbb{R})$ with $x(0) \geq 1/k$. 

Consider $(x_{n})_{n}$ to be a sequence in $C^{\uparrow}_{k}([0,1], \mathbb{R})$ which converges to $x$ with respect to the $J_{1}$ topology. Since a discontinuous element of $D([0,1], \mathbb{R})$ cannot be approximated in the $J_{1}$ topology by a sequence of continuous functions, the limit $x$ must be continuous and the convergence is uniform (see for example \cite{kulik2020heavy}, Proposition C.2.4). 

Thus, for $0\leq s < t \leq 1$, from $x_{n}(s) \leq x_{n}(t)$ letting $n \to \infty$ we obtain $x(s) \leq x(t)$, i.e.~$x$ is nondecreasing. Similarly we obtain $x(0) \geq 1/k$, which means that $x \in C^{\uparrow}_{k}([0,1], \mathbb{R})$. Hence $C^{\uparrow}_{k}([0,1], \mathbb{R})$ is a closed, and then also a measurable subset of $D([0,1], \mathbb{R})$ with the $J_{1}$ topology. 

Since the Borel $\sigma$--fields associated with the non-uniform Skorokhod topologies all coincide with the Kolmogorov $\sigma$--field generated by the projection maps, the set $C^{\uparrow}_{k}([0,1], \mathbb{R})$ is measurable with the $M_{1}$ topology for each $k$, and therefore the same holds for $C^{\uparrow}_{0}([0,1], \mathbb{R})$.
\end{proof}

\subsection{Proof of Lemma \ref{l:M1div}}

\begin{proof}
\

Consider an arbitrary pair $(x,y) \in  D([0,1], \mathbb{R}) \times C^{\uparrow}_{0}([0,1], \mathbb{R})$. 

Suppose that $d_{p}((x_{n},y_{n}), (x,y)) \to 0$ as $n \to \infty$. We need to show that $h(x_{n},y_{n}) \to h(x,y)$ in the $M_{1}$ topology, that is, $d_{M_{1}}(x_{n}/y_{n}, x/y) \to 0$. By the definition of the metric $d_{p}$, we have $d_{M_{1}}(x_{n},x) \to 0$ and $d_{M_{1}}(y_{n},y) \to 0$ as $n \to \infty$. Since for monotone functions the $M_{1}$ convergence is equivalent to pointwise convergence in a dense subset $A$ of $[0,1]$ including $0$ and $1$ (\cite{Whitt2002stochastic}, Corollary 12.5.1), the convergence $d_{M_{1}}(y_{n},y) \to 0$, which implies that
\begin{align}
 \frac{1}{y_{n}(t)} \to \frac{1}{y(t)}, \qquad t \in A.    
\end{align}
Furthermore, since the functions $1/y, 1/y_{n}$ ($n \in \mathbb{N}$) are also monotone, $d_{M_{1}}(1/y_{n}, 1/y) \to 0$ as $n \to \infty$. Noting that the function $1/y$ is continuous, we can apply Lemma~\ref{l:M1div} to conclude that $d_{M_{1}}(x_{n}/y_{n}, x/y) \to 0$ as $n \to \infty$.
\end{proof}

\newpage 

\section{Proofs of Main Results in Section 3}
\label{AppendixB}

\subsection{Proof of Lemma \ref{l:contfunct}}

\begin{proof}
Take an arbitrary $\eta \in \Lambda$ and suppose that $\eta_{n} \vto \eta$ in $\mathbf{M}_p([0,1] \times \mathbb{E})$. We need to show that
$\Phi^{(u)}(\eta_n) \to \Phi^{(u)}(\eta)$ in $D([0,1], \mathbb{R}^{2})$ according to the weak $M_1$ topology. By Theorem 12.5.2 in \cite{Whitt2002stochastic}, it suffices to prove that, as $n \to \infty$,
\begin{align}
d_{p} \left( \Phi^{(u)}(\eta_{n}), \Phi^{(u)}(\eta) \right) =
\max_{k=1, 2}d_{M_{1}} \left( \Phi^{(u)}_{k}(\eta_{n}), \Phi^{(u)}_{k}(\eta) \right) \to 0.    
\end{align}
Following, with small modifications, the proof of Lemma~3.2 in \cite{basrak2010functional} for each coordinate separately, we obtain
$d_{M_{1}} \left( \Phi^{(u)}_{1}(\eta_{n}), \Phi^{(u)}_{1}(\eta) \right) \to 0$ and $d_{M_{1}} \left( \Phi^{(u)}_{2}(\eta_{n}), \Phi^{(u)}_{2}(\eta) \right) \to 0$ as $n \to \infty$. Therefore we conclude that $\Phi^{(u)}$ is continuous at $\eta$.
\end{proof}

\subsection{Proof of Theorem \ref{t:functconvergence}}

The following proof is based on the manuscript of Danijel Krijmanic who kindly shared. Related references to the main steps as presented in related studies in the literature are also carefully explained. Thus, in a similar manner as in \cite{krizmanic2020joint} who establish the joint weak convergence of partial sum and maxima processes, the following steps are adjusted accordingly in order to accommodate the fact that we are interested in establishing the joint weak convergence of the stochastic process $X_t$ and its square $X_t^2$ rather the corresponding maxima of this process. Consequently, the second coordinate of expression \eqref{X1X2} corresponds to the squared term (e.g., see proof of Theorem 3.4 in \cite{krizmanic2020joint}).  Take an arbitrary $u>0$, and consider
\begin{align}
\label{X1X2}
\Phi^{(u)}(N_{n})(t) 
= 
\left( \sum_{i/n \leq t}\frac{X_{i}}{a_{n}} \boldsymbol{1}_{ \big\{ \frac{|X_{i}|}{a_{n}} > u \big\} }, \sum_{i/n \leq t}\frac{X_{i}^{2}}{a_{n}^{2}} \boldsymbol{1}_{ \big\{ \frac{|X^{2}_{i}|}{a^{2}_{n}} > u \big\} } \right), \qquad t \in [0,1].    
\end{align}

From Lemma~\ref{l:prob1} and Lemma~\ref{l:contfunct} we know that $\Phi^{(u)}$ is continuous on the set $\Lambda$ defined by \eqref{lampdas1}.  Recall that this set is basically an extension of the space $E$ as defined in Section 4.2 of \cite{basrak2018invariance}, in order to accommodate the more general setting we have in this study. This set almost surely contains the limiting point process $N$ such that  $N_{n} \dto N := \sum_{i}\sum_{j}\delta_{(T_{i}, P_{i}\eta_{ij})} \in [0,1] \times \mathbb{E} \ \ \ \text{as} \ n \to \infty$. 

Hence an application of the continuous mapping theorem yields that $\Phi^{(u)}(N_{n}) \dto \Phi^{(u)}(N)$ in $D([0,1], \mathbb{R}^{2})$ under the weak $M_{1}$ topology (see, similar argumentation in \cite{krizmanic2018joint, krizmanic2020joint}.  Therefore, we can establish the convergence of the functional $L_{n}^{(u)}(\,\cdot\,)$ as below
\begin{multline}
\label{e:mainconv}
  L_{n}^{(u)}(\,\cdot\,) 
      := 
      \left( \sum_{i = 1}^{\lfloor n \, \cdot \, \rfloor} \frac{X_{i}}{a_{n}}
    \boldsymbol{1}_{ \bigl\{ \frac{|X_{i}|}{a_{n}} > u \bigr\} },  \sum_{i = 1}^{\lfloor n \, \cdot \, \rfloor} \frac{X_{i}^{2}}{a_{n}^{2}}
    \boldsymbol{1}_{ \bigl\{ \frac{|X^{2}_{i}|}{a^{2}_{n}} > u \bigr\} } \right) 
    \\
    \dto L^{(u)}(\,\cdot\,) 
    :=  
    \left( \sum_{T_{i} \le \, \cdot} \sum_{j}P_{i}\eta_{ij} \boldsymbol{1}_{\{ P_{i}|\eta_{ij}| > u \}}, \sum_{T_{i} \le \, \cdot} \sum_{j}P_{i}^{2}\eta_{ij}^{2} \boldsymbol{1}_{\{ P_{i}|\eta_{ij}| > u \}} \right).
\end{multline}

\newpage

where $\left\{ a_n \right\}$ is a sequence of norming constants which ensures that convergence in distribution holds as $u \to 0$. 

Moreover, let $U_{i} = \sum_{j}\eta_{ij}$ and $\widetilde{U}_{i} = \sum_{j}\eta_{ij}^{2}$, where $i \in \left\{ 1,2,\ldots \right\}$. Then, since in Theorem 1 we assume that $\alpha < 1$, from the moment condition given in Corollary 1 (\ref{e:MW1}) (e.g., see \cite{davis1995point}), the following conditions apply for the two coordinates of the functional under consideration.  
\begin{align*}
\label{e:MW}
\mathrm{E}|U_{1}|^{\alpha} &\leq \mathrm{E} \Big( \sum_{j}|\eta_{j}| \Big)^{\alpha} < \infty,
\\
\mathrm{E}|\widetilde{U}_{1}|^{\alpha/2} = \mathrm{E} \Big( \sum_{j}|\eta_{j}|^{2} \Big)^{\alpha/2} &\leq \mathrm{E} \Big( \sum_{j}|\eta_{j}| \Big)^{\alpha} < \infty,
\end{align*}
where in the last relation we used also the triangle inequality $|\sum_{i}a_{i}|^{s} \leq \sum_{i}|a_{i}|^{s}$ with $s \in (0,1]$.

Using Proposition 5.2 and Proposition 5.3 given in \cite{resnick2007heavy}, it holds that $P_{i}|U_{i}|$, for $i \in \left\{ 1,2,\ldots \right\}$, are the points of a Poisson process with intensity measure $\theta \alpha \mathbf{E} |U_{1}|^{\alpha} x^{-\alpha-1}\,\rmd x$, where $x>0$. Since these points are summable it holds that 
\begin{align}
\sum_{i=1}^{\infty}P_{i}|U_{i}| \leq \sum_{i=1}^{\infty}\sum_{j}P_{i}|\eta_{ij}| < \infty    
\end{align}
almost surely (see the proof of Theorem 3.1 in \cite{davis1995point}). Thus for all $t\in [0,1]$
\begin{align}
\label{sum.condition}
\sum_{T_{i} \leq .}^t \sum_{j} P_{i}\eta_{ij} \boldsymbol{1}_{\{ P_{i}|\eta_{ij}| > u \}} \to \sum_{T_{i} \leq t} \sum_{j}P_{i}\eta_{ij}    
\end{align}
almost surely as $u \to 0$. In other words, expression \eqref{sum.condition} provides a summability condition that ensures weak convergence of a partial sum process to L\'evy processes (e.g., see \cite{krizmanic2020joint}). 

Furthermore, the first summand has a lower bound $\left\{ T_i \leq . \right\}$ and an upper bound $t$ which implies that we are summing up all points up to until time $T_i$. 

By the dominated convergence theorem
\begin{align}
\sup_{t \in [0,1]} \bigg| \sum_{T_{i} \leq t} \sum_{j} P_{i}\eta_{ij} \boldsymbol{1}_{\{ P_{i}|\eta_{ij}| > u \}} - \sum_{T_{i} \leq t} \sum_{j}P_{i}\eta_{ij} \bigg| \leq \sum_{i=1}^{\infty}\sum_{j}P_{i}|\eta_{ij}| \boldsymbol{1}_{ \{ P_{i}|\eta_{ij}| \leq u \} } \to 0  \end{align}
almost surely as $u \to 0$.

Since uniform convergence implies Skorokhod $M_{1}$ convergence, we get
\begin{equation}
\label{e:dm1}
d_{M_{1}} \left( \sum_{T_{i} \leq\,\cdot} \sum_{j} P_{i}\eta_{ij} \boldsymbol{1}_{\{ P_{i}|\eta_{ij}| > u \}}, \sum_{T_{i} \leq\,\cdot} \sum_{j}P_{i}\eta_{ij} \right) \to 0
\end{equation}
almost surely as $u \to 0$. 

\newpage

Similarly, $P_{i}^{2}\widetilde{U}_{i}$, $i \in \left\{ 1,2,\ldots \right\}$, are the points of a Poisson process with intensity measure $(\theta \alpha /2) \mathrm{E}(\widetilde{U}_{1})^{\alpha/2} x^{-\alpha/2-1}\,\rmd x$ for $x>0$, and it holds that
\begin{equation}
\label{e:dm12}
d_{M_{1}} \left( \sum_{T_{i} \leq\,\cdot} \sum_{j} P_{i}^{2}\eta_{ij}^{2} \boldsymbol{1}_{\{ P_{i}|\eta_{ij}| > u \}}, \sum_{T_{i} \leq\,\cdot} \sum_{j}P_{i}^{2}\eta_{ij}^{2} \right) \to 0
\end{equation}
almost surely as $u \to 0$.
Recalling the definition of the metric $d_{p}$ in (\ref{e:defdp}), from (\ref{e:dm1}) and (\ref{e:dm12}) we obtain that almost surely as $u \to 0$, 
\begin{equation*}
d_{p} \left( L^{(u)}, L \right) \to 0
\end{equation*}
where
\begin{align}
L(t) = \left( \sum_{T_{i} \le t} \sum_{j}P_{i}\eta_{ij}, \sum_{T_{i} \le t} \sum_{j}P_{i}^{2}\eta_{ij}^{2} \right).    
\end{align}
Since almost sure convergence implies weak convergence, we have, as $u \to 0$,
\begin{equation}
\label{e:mainconv2}
 L^{(u)} \dto L
\end{equation}
in $D([0,1], \mathbb{R}^{2})$ endowed with the weak $M_{1}$ topology.
By Proposition 5.2 and Proposition 5.3 in \cite{resnick2007heavy}, the process
\begin{align}
\sum_{i} \delta_{(T_{i}, \sum_{j}P_{i}\eta_{ij} })    
\end{align}
is a Poisson process with intensity measure $Leb \times \nu_{1}$, and
\begin{align}
\sum_{i} \delta_{(T_{i}, \sum_{j}P_{i}^{2}\eta_{ij}^{2}})    
\end{align}
is a Poisson process with intensity measure $Leb \times \nu_{2}$.

Furthermore, by the It\^{o} representation of the L\'{e}vy process (see \cite{resnick2007heavy}, pages 150--153) and Theorem 14.3 in \cite{ken1999levy},
\begin{align}
L_{1} (t) = \sum_{T_{i} \leq t} \sum_{j}P_{i}\eta_{ij}, \qquad t \in [0,1],    
\end{align}
is an $\alpha$--stable L\'{e}vy process with characteristic triple $(0, \nu_{1}, (c_{+}-c_{-}) \theta \alpha / (1-\alpha))$, and similarly
\begin{align}
L_{2} (t) = \sum_{T_{i} \leq t} \sum_{j}P_{i}^{2}\eta_{ij}^{2}, \qquad t \in [0,1],    
\end{align}
is an $\alpha/2$--stable L\'{e}vy process with characteristic triple $(0, \nu_{2}, \theta \alpha \mathrm{E}(\sum_{j}\eta_{j}^{2})^{\alpha/2}/(2-\alpha))$.

\newpage

Specifically, notice that the functional $L_1$ is an $\alpha-$stable L\'{e}vy process while the $L_2$ functional represents $\alpha/2$--stable L\'{e}vy process. Furthermore, within the same probability space it is well-known that the sum of two independent L\'{e}vy processes gives a stable L\'{e}vy process with intensity measure the sum of the intensity measures of the two processes. 

Therefore, if we show that
\begin{align}
\lim_{u \to 0}\limsup_{n \to \infty} \mathbb{P} \left( d_{p}(L_{n},L_{n}^{(u)}) > \epsilon \right)=0
\end{align}
for any $\epsilon >0$, from (\ref{e:mainconv}) and (\ref{e:mainconv2}) by a variant of Slutsky's theorem (see Theorem 3.5 in \cite{resnick2007heavy}) it will follow that $ L_{n} \dto L$ as $n \to \infty$, in $D([0,1], \mathbb{R}^{2})$ with the weak $M_{1}$ topology.

Since the  metric $d_{p}$ on $D([0,1], \mathbb{R}^{2})$ is bounded above by the uniform metric on  $D([0,1], \mathbb{R}^{2})$ (Theorem 12.10.3 in \cite{Whitt2002stochastic}), it suffices to show that
\begin{align}
\lim_{u \to 0} \limsup_{n \to \infty} \mathbb{P} \biggl( \sup_{t \in [0,1]} \|L_{n}(t) - L_{n}^{(u)}(t)\| > \epsilon \biggr)=0.    
\end{align}
Using stationarity and Markov's inequality we get the bound 
\begin{eqnarray}
\label{e:slutsky}
\nonumber 
\mathbb{P} \bigg(
     \sup_{t \in [0,1]} \|L_{n}(t) - L_{n}^{(u)}(t)\| >  \epsilon \bigg) & & 
     \\
   \nonumber & \hspace*{-20em} = & \ \hspace*{-10em} \mathbb{P} \bigg(
       \sup_{t \in [0,1]} \ \max \bigg\{  \bigg| \sum_{i=1}^{\lfloor nt \rfloor} \frac{X_{i}}{a_{n}}
       \boldsymbol{1}_{ \big\{ \frac{|X_{i}|}{a_{n}} \leq u \big\} }  \bigg|, \bigg| \sum_{i=1}^{\lfloor nt \rfloor} \frac{X_{i}^{2}}{a_{n}^{2}}
       \boldsymbol{1}_{ \big\{ \frac{|X^{2}_{i}|}{a^{2}_{n}} \leq u \big\} } \bigg| \bigg\}  > \epsilon
       \bigg)
       \\[0.4em]
   \nonumber  & \hspace*{-20em} \leq & \ \hspace*{-10em} \mathbb{P} \bigg(
       \sum_{i=1}^{n} \frac{|X_{i}|}{a_{n}}
       \boldsymbol{1}_{ \big\{ \frac{|X_{i}|}{a_{n}} \leq u \big\} }  > \epsilon
       \bigg) + \mathbb{P} \bigg(
       \sum_{i=1}^{n} \frac{X_{i}^{2}}{a_{n}^{2}}
       \boldsymbol{1}_{ \big\{ \frac{|X^{2}_{i}|}{a^{2}_{n}} \leq u \big\} }  > \epsilon
       \bigg)
       \\[0.4em]
      & \hspace*{-20em} \leq & \ \hspace*{-10em}
      \epsilon^{-1} n \mathbf{E} \bigg( \frac{|X_{1}|}{a_{n}}
       \boldsymbol{1}_{ \big\{ \frac{|X_{1}|}{a_{n}} \leq u \big\} } \bigg) +  \epsilon^{-1} n \mathbf{E} \bigg( \frac{X_{1}^{2}}{a_{n}^{2}}
       \boldsymbol{1}_{ \big\{ \frac{|X^{2}_{1}|}{a^{2}_{n}} \leq u \big\} } \bigg).
\end{eqnarray}
For the first term on the right-hand side of (\ref{e:slutsky}) we have
\begin{align}
n \mathbf{E} \bigg( \frac{|X_{1}|}{a_{n}}
       \boldsymbol{1}_{ \big\{ \frac{|X^{2}_{1}|}{a^{2}_{n}} \leq u \big\} } \bigg) =  u \cdot n \mathbb{P} (|X_{1}| > a_{n}) \cdot \frac{ \mathbb{P} (|X_{1}| > ua_{n})}{ \mathbb{P} (|X_{1}|>a_{n})} \cdot
          \frac{\mathbf{E}(|X_{1}| \boldsymbol{1}_{ \{ |X_{1}| \leq ua_{n} \} })}{ ua_{n} \mathbb{P} (|X_{1}| >ua_{n})},    
\end{align}
and for the second
\begin{align}
n \mathbf{E} \bigg( \frac{X_{1}^{2}}{a_{n}^{2}}
       \boldsymbol{1}_{ \big\{ \frac{|X_{1}|}{a_{n}} \leq u \big\} } \bigg) =  u^{2} \cdot n \mathbb{P} (|X_{1}| > a_{n}) \cdot \frac{\mathbb{P}(|X_{1}| > ua_{n})}{\mathbb{P}(|X_{1}|>a_{n})} \cdot
          \frac{\mathbf{E}(|X_{1}|^{2} \boldsymbol{1}_{ \{ |X_{1}| \leq ua_{n} \} })}{ u^{2}a_{n}^{2} \mathbb{P} (|X_{1}| >ua_{n})},    
\end{align}
Since $X_{1}$ is a regularly varying random variable with index $\alpha$, it follows immediately that
\begin{align}
 \frac{\mathbb{P}(|X_{1}| > ua_{n})}{\mathbb{P}(|X_{1}|>a_{n})} \to u^{-\alpha}    
\end{align}
as $n \to \infty$. By Karamata's theorem
\begin{align}
\lim_{n \to \infty} \frac{\mathbf{E}(|X_{1}| \boldsymbol{1}_{ \{ |X_{1}| \leq ua_{n} \} })}{ ua_{n} \mathbb{P} (|X_{1}| >ua_{n})} = \frac{\alpha}{1-\alpha} \qquad \textrm{and} \qquad \lim_{n \to \infty} \frac{\mathbf{E}(|X_{1}|^{2} \boldsymbol{1}_{ \{ |X_{1}| \leq ua_{n} \} })}{ u^{2}a_{n}^{2} \mathbb{P} (|X_{1}| >ua_{n})} = \frac{\alpha}{2-\alpha}.     
\end{align}
Therefore, taking into account relation (\ref{e:niz}), we get
\begin{align}
 n \mathbf{E} \bigg( \frac{|X_{1}|}{a_{n}}
       \boldsymbol{1}_{ \big\{ \frac{|X_{1}|}{a_{n}} \leq u \big\} } \bigg) \to u^{1-\alpha} \frac{\alpha}{1-\alpha} \qquad \textrm{and} \qquad n \mathbf{E} \bigg( \frac{X_{1}^{2}}{a_{n}^{2}}
       \boldsymbol{1}_{ \big\{ \frac{|X_{1}|}{a_{n}} \leq u \big\} } \bigg) \to u^{2-\alpha} \frac{\alpha}{2-\alpha}    
\end{align}
as $n \to \infty$. Therefore from (\ref{e:slutsky}) we obtain
\begin{align}
\limsup_{n \to \infty} \mathbb{P} \bigg( \sup_{t \in [0,1]} \|L_{n}(t) - L_{n}^{(u)}(t)\| >  \epsilon \bigg) \leq \epsilon^{-1} \alpha u^{1-\alpha} \Big( \frac{1}{1-\alpha} + \frac{u}{2-\alpha} \Big).    
\end{align} 
Letting $u \to 0$, since $1-\alpha >0$, we finally obtain
\begin{align}
\lim_{u \to 0} \limsup_{n \to \infty} \mathbb{P} \bigg( \sup_{t \in [0,1]} \|L_{n}(t) - L_{n}^{(u)}(t)\| >  \epsilon \bigg) = 0.    
\end{align} 


\newpage

\subsection{Proof of Theorem \ref{stationary}}

Notice that it holds that $b_{1n}^{(u)} \to b_1$ and $b_{2n}^{(u)}\to b_2$ as $u \to 0$
\begin{align}
b_{1n}^{(u)} 
&:=
\begin{cases}
0, & \alpha \in (0,1), 
\\
\mathbb{E} \left[ \frac{X_1}{ a_n } \mathbf{1}_{ \left\{ \textcolor{red}{u < } \frac{ | X_1 | }{ a_n } \leq 1 \right\} } \right], & \alpha \in [1,2).
\end{cases}
\\
b_{2n}^{(u)} 
&:=
\begin{cases}
0, & \alpha \in (0,1), 
\\
\mathbb{E} \left[ \frac{X^2_1}{ a^2_n } \mathbf{1}_{ \left\{ \textcolor{red}{u  < } \frac{ X_1^2 }{ a_n } \leq 1  \right\} } \right], & \alpha \in [1,2).
\end{cases}
\end{align}
\begin{align}
b_{1}
&:=
\begin{cases}
0, & \alpha \in (0,1), 
\\
\displaystyle  \int_{ \textcolor{red}{0< }|x| \leq 1} x \mu (dx), & \alpha \in [1,2).
\end{cases}
\\
b_{2}
&:=
\begin{cases}
0, & \alpha \in (0,1), 
\\
\displaystyle  \int_{ \textcolor{red}{0 < } x^2 \leq 1} x^2 \mu (dx), & \alpha \in [1,2).
\end{cases}
\end{align}
In other words, in the case where the regularly varying index $\alpha \in [1,2)$ similar to the approach of \cite{basrak2018invariance} (e.g., see Section 4.2), we consider the centering constant.

\begin{proof}
Therefore, similar to the case $\alpha \in (0,1)$ we aim to prove the joint weak convergence of the functional $L_n(t)$ of both its coordinates to the corresponding limit process. To do this, we take an arbitrary $u>0$, and consider
\begin{align}
\Phi^{(u)}(N_{n})(t) 
= 
\left( \sum_{i/n \leq t}\frac{X_{i}}{a_{n}} \mathbf{1}_{ \left\{  \frac{ | X_1 | }{ a_n } \leq u \right\} }, \sum_{i/n \leq t}\frac{X_{i}^{2}}{a_{n}^{2}} \mathbf{1}_{ \left\{  \frac{ |X_1| }{ a_n } \leq u  \right\} }  \right), \qquad t \in [0,1].    
\end{align}
From Lemma~\ref{l:prob1} and Lemma~\ref{l:contfunct} we know that $\Phi^{(u)}$ is continuous on the set $\Lambda$ and this set almost surely contains the limiting point process $N$ from (\ref{e:BaTa}). Hence an application of the continuous mapping theorem yields that $\Phi^{(u)}(N_{n}) \dto \Phi^{(u)}(N)$ in $D([0,1], \mathbb{R}^{2})$ under the weak $M_{1}$ topology. Therefore, we can establish the convergence of the functional $L_{n}^{(u)}(\,\cdot\,)$ as below
\begin{multline}
\label{e:mainconv}
L_{n}^{(u)}(\,\cdot\,) 
:= 
\left( \sum_{i = 1}^{\lfloor n \, \cdot \, \rfloor} \frac{X_{i}}{a_{n}}
       - \lfloor n \, \cdot \, \rfloor b_{1n}^{(u)},  \sum_{i = 1}^{\lfloor n \, \cdot \, \rfloor} \frac{X_{i}^{2}}{a_{n}^{2}} - \lfloor n \, \cdot \, \rfloor b_{2n}^{(u)}  \right) 
    \\
    \dto L^{(u)}(\,\cdot\,) 
    :=  
    \left( \underset{ u \to 0  }{ \mathsf{lim} } \left\{ \sum_{T_{i} \le \, \cdot} \sum_{j}P_{i}\eta_{ij} \boldsymbol{1}_{\{ P_{i}|\eta_{ij}| > u \}} - (\,\cdot\,)  b_{1} \right\},   \underset{ u \to 0  }{ \mathsf{lim} } \left\{ \sum_{T_{i} \le \, \cdot} \sum_{j}P_{i}^{2}\eta_{ij}^{2} \boldsymbol{1}_{\{ P^2_{i}|\eta^2_{ij}| > u \}} - (\,\cdot\,)  b_{2}  \right\} \right).
\end{multline}
where $\left\{ a_n \right\}$ is a sequence of norming constants which ensures that convergence in distribution holds as $u \to 0$, which proves that $L^{ (u) } \overset{ d }{ \to }  L$ in $\mathcal{D}  ( [0,1], \mathbb{R}^2 )$ endowed with the weak $M_1$ topology.

\medskip

\newpage

Notice that the above joint convergence results implies that for each coordinate of the joint functional it holds that 
\begin{align}
\sum_{i = 1}^{\lfloor n \, \cdot \, \rfloor} \frac{X_{i}}{a_{n}} - \lfloor n \, \cdot \, \rfloor b_{1n}^{(u)} &\to \underset{ u \to 0  }{ \mathsf{lim} } \left\{ \sum_{T_{i} \le \, \cdot} \sum_{j}P_{i}\eta_{ij} \boldsymbol{1}_{\{ P_{i}|\eta_{ij}| > u \}} - (\,\cdot\,)  b_{1} \right\}, \ \ \ \text{as} \ n \to \infty
\\
\sum_{i = 1}^{\lfloor n \, \cdot \, \rfloor} \frac{X_{i}^{2}}{a_{n}^{2}} - \lfloor n \, \cdot \, \rfloor b_{2n}^{(u)}  &\to  \underset{ u \to 0  }{ \mathsf{lim} } \left\{ \sum_{T_{i} \le \, \cdot} \sum_{j}P_{i}^{2}\eta_{ij}^{2} \boldsymbol{1}_{\{ P^2_{i}|\eta^2_{ij}| > u \}} - (\,\cdot\,)  b_{2}  \right\}, \ \ \ \text{as} \ n \to \infty
\end{align}
\color{black}
or in other words, 
\begin{align}
\sum_{i = 1}^{\lfloor n \, \cdot \, \rfloor} \frac{X_{i}}{a_{n}} - \lfloor n \, \cdot \, \rfloor \mathbb{E} \left[ \frac{X_1}{ a_n } \mathbf{1}_{ \left\{ \textcolor{red}{u < } \frac{ | X_1 | }{ a_n } \leq 1 \right\} } \right] 
\to
\underset{ u \to 0  }{ \mathsf{lim} } \left\{ \sum_{T_{i} \le \, \cdot} \sum_{j}P_{i}\eta_{ij} \boldsymbol{1}_{\{ P_{i}|\eta_{ij}| > u \}} - (\,\cdot\,)  \displaystyle  \int_{ \textcolor{red}{u < }|x| \leq 1} x \mu (dx) \right\}, \ \text{for} \ \ \alpha \in [1,2).   
\end{align}
and
\begin{align}
\sum_{i = 1}^{\lfloor n \, \cdot \, \rfloor} \frac{X^2_{i}}{a^2_{n}} - \lfloor n \, \cdot \, \rfloor \mathbb{E} \left[ \frac{X^2_1}{ a^2_n } \mathbf{1}_{ \left\{ \textcolor{red}{u < } \frac{ | X_1 | }{ a_n } \leq 1 \right\} } \right] 
\to
\underset{ u \to 0  }{ \mathsf{lim} } \left\{ \sum_{T_{i} \le \, \cdot} \sum_{j}P^2_{i}\eta^2_{ij} \boldsymbol{1}_{\{ P^2_{i}|\eta^2_{ij}| > u \}} - (\,\cdot\,)  \displaystyle  \int_{ \textcolor{red}{u < } x^2 \leq 1} x^2 \mu (dx) \right\}, \ \text{for} \ \ \alpha \in [1,2).   
\end{align}

\medskip

Next, as before we need to show that
\begin{align}
\lim_{u \to 0} \limsup_{n \to \infty} \Pr \biggl( \sup_{t \in [0,1]} \|L_{n}(t) - L_{n}^{(u)}(t)\| > \epsilon \biggr) = 0.   
\end{align}
Using stationarity and Markov's inequality we get the bound
\begin{small}
\begin{align}
\label{e:slutsky}
\nonumber 
&\mathbb{P} \bigg(
\sup_{t \in [0,1]} \|L_{n}(t) - L_{n}^{(u)}(t)\| >  \epsilon \bigg) 
\\[0.3em]
\nonumber 
&=  
\mathbb{P} \bigg(
       \sup_{t \in [0,1]} \ \max \bigg\{  \bigg| \sum_{i=1}^{\lfloor nt \rfloor} \frac{X_{i}}{a_{n}}
       \mathbf{1}_{ \big\{ \frac{|X_{i}|}{a_{n}} \leq u \big\} }  - \floor{nt} \mathbb{E} \left(  \frac{X_{1}}{a_{n}} \mathbf{1}_{ \big\{ \frac{|X_{1}|}{a_{n}} \leq u \big\} }  \right)   \bigg|, \bigg| \sum_{i=1}^{\lfloor nt \rfloor} \frac{X_{i}^2 }{a^2_{n}}
       \mathbf{1}_{ \big\{ \frac{|X_{i}|}{a_{n}} \leq u \big\} }  - \floor{nt} \mathbb{E} \left(  \frac{X^2_{1}}{a^2_{n}} \mathbf{1}_{ \big\{ \frac{|X_{1}|}{a_{n}} \leq u \big\} }  \right)   \bigg|  \bigg\}  > \epsilon
       \bigg)
       \\[0.4em]
   \nonumber 
&=
\mathbb{P} \bigg(
       \max_{ 0 \leq k \leq n } \ \bigg| \sum_{i=1}^{ k } \bigg\{  \frac{X_{i}}{a_{n}}
       \mathbf{1}_{ \big\{ \frac{|X_{i}|}{a_{n}} \leq u \big\} }  - \mathbb{E} \left(  \frac{X_{i}}{a_{n}} \mathbf{1}_{ \big\{ \frac{|X_{i}|}{a_{n}} \leq u \big\} }  \right) \bigg\}  \bigg|  > \epsilon
       \bigg)
\\[0.3em]
\nonumber        
&\ + \mathbb{P} \bigg(\max_{ 0 \leq k \leq n } \ \bigg| \sum_{i=1}^{ k } \bigg\{  \frac{X^2_{i}}{a^2_{n}}
       \mathbf{1}_{ \big\{ \frac{|X_{i}|}{a_{n}} \leq u \big\} }  - \mathbb{E} \left(  \frac{X^2_{i}}{a^2_{n}} \mathbf{1}_{ \big\{ \frac{|X_{i}|}{a_{n}} \leq u \big\} }  \right) \bigg\}  \bigg|  > \epsilon \bigg).
 \end{align}
\end{small}
Therefore, from previous conditions it follows that 
\begin{align}
\lim_{u \to 0} \limsup_{n \to \infty} \Pr \biggl( \sup_{t \in [0,1]} \|L_{n}(t) - L_{n}^{(u)}(t)\| > \epsilon \biggr) = 0.   
\end{align}

\newpage

Lastly, it remains to show that the L\'evy process $V$ has a characteristic triple $\left( 0, \nu^{\prime}, \gamma \right)$, which will imply that the limit random variable is equivalent to the case where $\alpha \in (0,1)$ presented in the literature, related to weak convergence of partial sum processes to the space $D[0,1]$ equipped with the $M_1$ topology (see, \cite{basrak2010functional}). To prove the required result we consider the analytical expression of the characteristic function of the $\alpha-$stable random variable $V(1)$. 

The L\'evy process $V$ is the weak limit in the sense of finite dimensional distributions of the partial sum process $V_n$ which is related to the canonical measure in Feller's representation of an infinitely divisible characteristic function (see, \cite{feller1971introduction}) which is also discussed in \cite{davis1995point}. Therefore, in the following derivations we follow the approach presented both by  \cite{basrak2018invariance} and \cite{krizmanic2020joint}, following the derivations given in Theorem 3.2 of \cite{davis1995point}. In other words, we obtain the analytical expression of the partial sum process $V_n$ in terms of its characteristic function.    

Specifically, we denote with $V_n(t)$ the L\'evy process of the first coordinate as below
\begin{align}
V_n(t) := L_{1n}(t)  = 
\begin{cases}
\displaystyle \sum_{i = 1}^{ \floor{nt} } \frac{X_i}{ a_n } & ,\text{if} \ \alpha \in (0,1)
\\
\\
\displaystyle
\sum_{i = 1}^{ \floor{nt} } \frac{X_i}{ a_n } - \floor{nt} \mathbb{E} \left[ \frac{ X_1 }{\alpha_n} \mathbf{1}_{ \left\{ \frac{ X_1 }{\alpha_n} \leq 1 \right\}} \right] & ,\text{if} \ \alpha \in [1,2).
\end{cases}
\end{align}
where
\begin{align}
\label{L1expression}
L_1(\,\cdot\,) = 
\begin{cases}
\displaystyle \sum_{ T_i \leq . } \sum_j P_i \eta_{ij}, & \alpha \in (0,1),
\\
\displaystyle \underset{ u \to 0 }{ \mathsf{lim} } \left( \sum_{ T_i \leq . } \sum_j P_i \eta_{ij} 1_{ \left\{ P_i \left| \eta_{ij} \right| > u  \right\} } - (\,\cdot\,) \int_{  u < \left| x \right| \leq 1 } x \mu (dx) \right), & \alpha \in [1,2),
\end{cases}
\end{align}   
Similarly, we denote with $W_n(t)$ the L\'evy process of the second coordinate as below
\begin{align}
W_n(t) := L_{2n}(t)  = 
\begin{cases}
\displaystyle \sum_{i = 1}^{ \floor{nt} } \frac{X^2_i}{ a_n } & ,\text{if} \ \alpha \in (0,1)
\\
\\
\displaystyle
\sum_{i = 1}^{ \floor{nt} } \frac{X^2_i}{ a_n } - \floor{nt} \mathbb{E} \left[ \frac{ X^2_1 }{\alpha_n} \mathbf{1}_{ \left\{ \frac{ X^2_1 }{\alpha_n} \leq 1 \right\}} \right] & ,\text{if} \ \alpha \in [1,2).
\end{cases}
\end{align}
where
\begin{align}
L_2(\,\cdot\,) = 
\begin{cases}
\displaystyle \sum_{ T_i \leq . } \sum_j P^2_i \eta^2_{ij}, & \alpha \in (0,1),
\\
\displaystyle \underset{ u \to 0 }{ \mathsf{lim} } \left( \sum_{ T_i \leq . } \sum_j P^2_i \eta^2_{ij} 1_{ \left\{ P^2_i \left| \eta_{ij} \right| > u  \right\} } - (\,\cdot\,) \int_{  u < \left| x \right| \leq 1 } x^2 \mu (dx) \right), & \alpha \in [1,2),
\end{cases}
\end{align}

\newpage

\underline{ Case 1: $\alpha \in (0,1)$ }

\begin{itemize}

\item[Step 1.]  For every $\epsilon > 0$, consider the functions $s^{(u)}, w^{(u)}$ and $y^{(u)}$ defined on $\tilde{\ell}_0$ by 
\begin{align}
s^{(u)} ( \tilde{\boldsymbol{x}} ) 
&=
\sum_j x_j  \boldsymbol{1}_{ \left\{ |x_j| > u \right\} }  
\\
w^{(u)} ( \tilde{\boldsymbol{x}} ) 
&=
\underset{ k }{ \mathsf{inf} } \sum_{ j \leq k } x_j \boldsymbol{1}_{ \left\{ |x_j| > u \right\} }  
\\
y^{(u)} ( \tilde{\boldsymbol{x}} ) 
&=
\underset{ k }{ \mathsf{sup} } \sum_{ j \leq k } x_j \boldsymbol{1}_{ \left\{ |x_j| > u \right\} } 
\end{align}
Moreover, define the mapping $ \mathcal{T}_{(u)}: \mathcal{M}_p \big( [0,1] \times \tilde{\ell}_0 \ \left\{  \boldsymbol{0}  \right\} \big) \to D[0,1]$, by setting for $\gamma = \sum_{i=1}^{ \infty } \delta_{  t_i, \tilde{\boldsymbol{x}}^i }$, \begin{align}
\mathcal{T}_{(u)}  \big( \gamma \big) := \left(  \left( \sum_{T_i \leq t} s^{(u)} ( \tilde{\boldsymbol{x}}^i )   \right)_{ T \in [0,1] }, \big\{ T_i: \norm{ \tilde{\boldsymbol{x}}^i }_{ \infty } > u \big\},  \big\{ I(T_i): \norm{ \tilde{\boldsymbol{x}}^i }_{ \infty } > u \big\}  \right)   
\end{align}
where 
\begin{align}
I(T_i) = \sum_{ T_j < T_i } s^{(u)} (\tilde{\boldsymbol{x}}^j) + \left[ \sum_{T_k = T_i}  w^{(u)} (\tilde{\boldsymbol{x}}^k), \sum_{T_k = T_i} y^{(u)} (\tilde{\boldsymbol{x}}^k)     \right].    
\end{align}

Let $\Lambda = \Lambda_{1} \cap \Lambda_{2}$. Moreover, the subsets $\Lambda_1$ and $\Lambda_2$ are defined as below
\begin{multline*}
\label{lampdas}
\Lambda_{1} =
\{ \eta \in \mathbf{M}_{p}([0,1] \times \mathbb{E}) :
\eta ( \{0,1 \} \times \mathbb{E}) = 0 = \eta ([0,1] \times \{ \pm \infty, \pm u \}) \}, \\[1em]
\shoveleft \Lambda_{2} =
\{ \eta \in \mathbf{M}_{p}([0,1] \times \mathbb{E}) :
\eta ( \{ t \} \times (u, \infty]) \cdot \eta ( \{ t \} \times [-\infty,-u)) = 0, \  \text{for all $t \in [0,1]$} \}.
\end{multline*}

\medskip

\item[Step 2.] Define with $W_i = \sum_{ j \in \mathbb{Z} } | Q_{i,j} |$ and $\sum_{i=1}^{ \infty } \delta_{ P_i W_i }$ is a Poisson point process on $(0, \infty]$ with intensity measure given by $\theta \mathbb{E} [ W_i^{\alpha} ] \alpha y^{ - \alpha - 1 } dy$. This definition implies that we can express the point process as below 
\begin{align}
\sum_{i=1}^{ \infty } P_i W_i = \sum_{i=1}^{ \infty }  \sum_{ j \in \mathbb{Z} } Pi |Q_{i,j } | < \infty < \infty, \ \ \text{almost surely},       
\end{align}
Our aim is to prove that the above limit process is an $\alpha-$stable L\'evy process in the topological space $D( [0,1], \mathbb{R} )$ equipped with the $M_1$ topology.

\newpage

Therefore, by defining with $s( \tilde{\boldsymbol{x} } ) = \sum_j x_j$, we obtain that the process can be expressed as 
\begin{align}
V(t) = \sum_{ T_i \leq t  }  s \left( P_i \boldsymbol{Q}_i \right), \ \ \ t \in [0,1]
\end{align}
is almost surely a well-defined element in the topological space $D$ and thus, is an $\alpha-$stable L\'evy process. Consequently we have that the partial sum process is equivalent to 
\begin{align*}
S_{n,u}(t)
&= 
\sum_{i=1}^{ \floor{nt} } \frac{ X_i }{ a_n } \boldsymbol{1}_{ \left\{ \frac{|X_i|}{ a_n } > u \right\} }, \ \ \alpha \in (0,1) \ \text{and} \ t \in [0,1],  
\\
&=  
\sum_{ T_i \leq t } s \left( P_i \boldsymbol{Q}_i \right) 
=
\sum_{ T_i \leq t } \sum_{ j \in \mathbb{Z} } P_i Q_{i,j} \boldsymbol{1}_{ \left\{ | P_i Q_{i,j} | > u \right\} }, \ \ t \in [0,1]. 
\end{align*}
Notice that Step 2 above holds due to an application of the following Corollary.

\medskip

\begin{corollary}
Let $\left\{ r_n \right\}$ be a nonnegative non decreasing integer valued sequence such that $\mathsf{lim}_{ n \to \infty } r_n  \infty$ and $\mathsf{lim}_{ n \to \infty } n / r_n = \infty$, and let $\left\{ a_n \right\}$ be a non decreasing sequence such that $n \mathbb{P} \big( |X_0| > a_n \big) \to 1$. Then, it holds that
\begin{align}
N_n^{ \prime \prime } \sum_{i=1}^{ k_n }  \delta_{ ( i / k_n, ( X_{(i-1) r_n + 1},..., X_{i r_n} ) / a_n ) }  \overset{ d }{ \to } \sum_{i=1}^{ \infty } \delta_{ ( T_i, P_i \boldsymbol{Q}_i )}
\end{align}
where $\delta_{ ( T_i, P_i)} $ is a Poisson point process on $[0,1] \times (0, \infty]$ with intensity measure $Leb \times d ( - \theta y^{-\alpha} )$. 
\end{corollary}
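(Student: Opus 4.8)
The plan is to establish this point process convergence by the Laplace functional method, following the blocks argument of Basrak--Segers and Basrak--Tafro~\cite{basrak2016complete} (see also~\cite{basrak2009regularly}). First I would fix the state space: each block $(X_{(i-1)r_{n}+1},\ldots,X_{ir_{n}})/a_{n}$ is regarded as an element of the space $\tilde{\ell}_{0}$ of real sequences vanishing at infinity (modulo shifts), metrized so that a subset is relatively compact precisely when it is bounded away from the sequences of sup-norm at most $\varepsilon$. On $[0,1]\times(\tilde{\ell}_{0}\setminus\{\boldsymbol{0}\})$ the random measures $N_{n}''$ are then Radon, and by Kallenberg's theorem (see~\cite{resnick2008extreme}) it suffices to show that for every $g\in C_{K}^{+}([0,1]\times(\tilde{\ell}_{0}\setminus\{\boldsymbol{0}\}))$,
\begin{multline*}
\mathbb{E}\Big[\exp\Big(-\sum_{i=1}^{k_{n}} g\big(\tfrac{i}{k_{n}},\,(X_{(i-1)r_{n}+1},\ldots,X_{ir_{n}})/a_{n}\big)\Big)\Big]\\
\longrightarrow
\exp\Big(-\!\int_{0}^{1}\!\!\int_{0}^{\infty}\mathbb{E}\big[1-e^{-g(s,\,y\boldsymbol{Q})}\big]\,\theta\alpha y^{-\alpha-1}\,dy\,ds\Big),
\end{multline*}
the right-hand side being exactly the Laplace functional of $\sum_{i}\delta_{(T_{i},P_{i}\boldsymbol{Q}_{i})}$, with $\sum_{i}\delta_{(T_{i},P_{i})}$ a Poisson process on $[0,1]\times(0,\infty]$ of intensity $Leb\times d(-\theta y^{-\alpha})$ and $(\boldsymbol{Q}_{i})$ an i.i.d.\ sequence, independent of it, of cluster shapes with $\|\boldsymbol{Q}_{i}\|_{\infty}=1$ whose nonzero coordinates form the point processes $\sum_{j}\delta_{\eta_{ij}}$ of~\eqref{e:BaTa}.

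The argument then proceeds in three moves. (i)~\emph{Decoupling the blocks}: since $g$ depends on the $i$-th term only through the block $(X_{(i-1)r_{n}+1},\ldots,X_{ir_{n}})/a_{n}$, the block analogue of Condition~\ref{cond1} lets the expectation of the product over $i=1,\ldots,k_{n}$ be replaced by $\prod_{i=1}^{k_{n}}\mathbb{E}[\exp(-g(i/k_{n},(X_{1},\ldots,X_{r_{n}})/a_{n}))]$ with an error tending to $0$, the mismatch between the time labels $i/k_{n}$ and $ir_{n}/n$ being absorbed by uniform continuity of $g$ in its first argument. (ii)~\emph{Single-block asymptotics}: writing $\mathbb{E}[e^{-g(s,\cdot)}]=1-\mathbb{E}[1-e^{-g(s,(X_{1},\ldots,X_{r_{n}})/a_{n})}]$ and using $\log(1-x)=-x+O(x^{2})$ (with a remainder negligible because each block contributes $O(k_{n}^{-1})$), the problem reduces to the limit
\[
k_{n}\,\mathbb{E}\big[1-e^{-g(s,(X_{1},\ldots,X_{r_{n}})/a_{n})}\big]\longrightarrow\int_{0}^{\infty}\mathbb{E}\big[1-e^{-g(s,\,y\boldsymbol{Q})}\big]\,\theta\alpha y^{-\alpha-1}\,dy,
\]
uniformly in $s$. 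Since $g$ is supported away from $\boldsymbol{0}$, only blocks with $\max_{1\le j\le r_{n}}|X_{j}|>\delta a_{n}$ contribute, and this is exactly where Condition~\ref{c:mixcond2} (anticlustering), the tail-process characterization~\eqref{e:tailprocess} and the extremal-index identity $\theta=\mathbb{P}(\sup_{i\ge1}|Y_{i}|\le1)$ from~\eqref{E:theta:spectral} enter: the conditional law of $a_{n}^{-1}(X_{1},\ldots,X_{r_{n}})$ given that the block maximum exceeds $\delta a_{n}$ converges to the law of $y\boldsymbol{Q}$ with $y$ Pareto-distributed on $(\delta,\infty)$, while $k_{n}\,\mathbb{P}(\max_{1\le j\le r_{n}}|X_{j}|>\delta a_{n})\to\theta\delta^{-\alpha}$. (iii)~\emph{Riemann-sum passage}: the uniformity in $s$ from step~(ii) turns $\sum_{i=1}^{k_{n}}\mathbb{E}[1-e^{-g(i/k_{n},\cdot)}]=\tfrac{1}{k_{n}}\sum_{i=1}^{k_{n}}k_{n}\mathbb{E}[1-e^{-g(i/k_{n},\cdot)}]$ into $\int_{0}^{1}\int_{0}^{\infty}\mathbb{E}[1-e^{-g(s,y\boldsymbol{Q})}]\,\theta\alpha y^{-\alpha-1}\,dy\,ds$, and exponentiating recovers the limiting Laplace functional above.

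The main obstacle is step~(ii): the block dimension $r_{n}\to\infty$, so one works in $\mathbb{R}^{r_{n}}$ viewed as a subset of $\tilde{\ell}_{0}$ and must simultaneously rule out escape of mass to infinity (again using $n\mathbb{P}(|X_{0}|>a_{n})\to1$ and $r_{n}/n\to0$) and extract \emph{precisely} the factor $\theta$ together with the cluster-shape law $\boldsymbol{Q}$ from the conditional limit of the normalized block. Condition~\ref{c:mixcond2} is indispensable here, since without it a cluster could straddle a block boundary and the limit would fail to have i.i.d.\ marks. By contrast, the decoupling step~(i) and the Riemann-sum step~(iii) are routine. Overall the argument parallels Theorem~2.3 and Proposition~4.2 of~\cite{basrak2016complete}, specialized to the block process $N_{n}''$.
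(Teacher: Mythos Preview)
The paper does not prove this corollary at all: it is stated inside Step~2 of the proof of Theorem~\ref{stationary} purely as an auxiliary fact (essentially imported from \cite{basrak2018invariance} and \cite{basrak2016complete}), with the words ``Step~2 above holds due to an application of the following Corollary'' and nothing further. So there is no ``paper's own proof'' to compare against; the paper treats the result as known.

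Your Laplace-functional/blocks sketch is the standard route and is exactly how the cited references establish such block point process convergence: decouple blocks via the mixing-type Condition~\ref{cond1}, identify the single-block limit via the tail process and anticlustering Condition~\ref{c:mixcond2} to pull out the factor $\theta$ and the cluster shape $\boldsymbol{Q}$, then pass to the Riemann sum in the time coordinate. In that sense your proposal supplies precisely what the paper omits, and it is aligned with the sources the paper defers to. One small caveat: the decoupling you invoke in step~(i) is a \emph{block} version of Condition~\ref{cond1} (with test functions living on $[0,1]\times\tilde{\ell}_{0}$ rather than $[0,1]\times\mathbb{E}$), which is slightly stronger than the condition as literally stated in the paper; in the references this is handled by deriving the block Laplace factorization from strong mixing directly, so you should either assume strong mixing or note that the block analogue of~\eqref{e:mixcon} is what is actually needed.
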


\medskip

\begin{remark}
Notice that the above representation holds if and only if we prove that all the previous steps hold. In other words, we need to prove that in our settings a similar limit result holds as in Theorem 4.5 in the paper of \cite{basrak2018invariance}
\end{remark}

\medskip

\item[Step 3.] The next step, is to show that the partial sum process under consideration is a tight process in the topological space $D ( [0,1], \mathbb{R} )$. Notice that since two marginal sequences of processes are tight, the joint sequences of processes must also be tight.

\newpage

\item[Step 4.] The last step is to show that the original partial sum process $S_n$; and therefore $V_n$, since $\alpha \in (0,1)$ also converges in distribution to $V^{\prime}$, which implies that we can use the metric distance to prove that it converges in probability to zero as $n \to \infty$.

\end{itemize}

Therefore, since the limit variable $V$ that corresponds to the limit of the partial sum process $V_n$ is an $\alpha-$stable random variable, then it has the following characteristic function representation
\begin{align}
\mathbf{E} \left[ e^{iz V(1)} \right] = 
\begin{cases}
\displaystyle \mathsf{exp} \left\{ i \tau z - c|z|^{\alpha} \left( 1 - i \beta \mathsf{sign}(z) \mathsf{tan} \left( \frac{\pi \alpha}{2} \right) \right)   \right\}, & \alpha \neq 1
\\
\\
\displaystyle \mathsf{exp} \left\{ i \tau z - c|z| \left( 1 + i \beta \frac{2}{\pi} \mathsf{sign}(z) \mathsf{log} |z| \right) \right\}, & \alpha = 1
\end{cases}
\end{align}
where $c >0$, $\beta \in [-1,1]$ and $\tau \in \mathbb{R}$, with $V(1)$ as expression \eqref{L1expression} in Theorem 3.4. By Remark 3.2 in \cite{davis1995point}, the \textcolor{blue}{scale parameter $c$} and the \textcolor{blue}{symmetry parameter $\beta$} are defined with the following coefficients respectively, (see also, \cite{feller1971introduction} pages 568-570). 
\begin{align}
\textcolor{blue}{c} &= 
\begin{cases}
\big( c_{+} + c_{-} \big) \frac{ \displaystyle \Gamma ( 3 - \alpha) }{ \displaystyle \alpha (\alpha - 1)} \mathsf{cos} \left( \frac{\pi \alpha}{2} \right), & \text{if} \ \alpha \neq 1, 
\\
\big( c_{+} + c_{-} \big) \displaystyle \frac{\pi}{2}, & \text{if} \ \alpha = 1
\end{cases}
\\
\textcolor{blue}{\beta} &= \frac{  c_{+} - c_{-}  }{ c_{+} + c_{-} },
\end{align}
where
\begin{align}
c_{+} 
&= 
\frac{ \alpha }{ 2 - \alpha } \ \underset{ u \to 0 }{ \mathsf{lim} } \int_0^{ \infty } \mathbb{P} \left(  \sum_{j=1}^{\infty} y Q_{1j} \mathbf{1}_{ \left\{ u, \infty \right\} } ( y | Q_{1j} | )  > 1  \right) \gamma \alpha y^{- \alpha - 1 } dy  
\\
c_{+} 
&= 
\frac{ \alpha }{ 2 - \alpha } \ \underset{ u \to 0 }{ \mathsf{lim} } \int_0^{ \infty } \mathbb{P} \left(  \sum_{j=1}^{\infty} y Q_{1j} \mathbf{1}_{ \left\{ u, \infty \right\} } ( y | Q_{1j} | )  < - 1  \right) \gamma \alpha y^{- \alpha - 1 } dy   
\end{align}

Moreover, the \textcolor{blue}{location parameter $\tau$} is defined as below
\begin{align}
\tau 
=
\begin{cases}
\underset{ u \to 0 }{ \mathsf{lim} } \ \displaystyle \int_{ - \infty }^{ \infty } \frac{ \displaystyle \mathsf{sin} x }{ x^2 } \textcolor{blue}{M_{u}}  (dx) - b(u,1], & \ \text{if} \ \alpha = 1,
\\
\underset{ u \to 0 }{ \mathsf{lim} } \ \displaystyle \int_{ - \infty }^{ \infty } x^{-1} \textcolor{blue}{M_{u}} (dx) - b(u,1], & \ \text{if} \ \alpha > 1,
\end{cases}
\end{align}
where \textcolor{blue}{$M_u(dx)$} is the canonical measure in Feller's representation of an infinitely divisible characteristic function given by 
\begin{align}
M_u(dx) = x^2 \int_0^{\infty} \mathbb{P} \left(  \sum_{j=1}^{\infty} y Q_{1j} \mathbf{1}_{ \left\{ u, \infty \right\} } ( y | Q_{1j} | )  \in dx  \right) \gamma \alpha y^{- \alpha - 1 } dy    
\end{align}

\newpage

\underline{Case 2: $\alpha \in [1,2)$}

\

Furthermore, in the case when $\alpha \in [1,2)$ we need to introduce centering, and therefore we define the c\'adl\'ag process $V_n$ by setting $t \in [0,1]$ for
\begin{align}
S_{n,u}(t) = \sum_{i=1}^{ \floor{nt} } \frac{ X_i }{ a_n } \mathbf{1}_{ \left\{ \frac{|X_i|}{ a_n } > u \right\} }   
\end{align}
and the corresponding partial sum process with a centering sequence given by 
\begin{align}
V_{n,u}(t) = S_{n,u}(t)  - \floor{ nt } \mathbf{E} \left[ \frac{X_1}{ a_n } 1_{ \left\{ u <  \frac{|X_i|}{ a_n } \leq 1 \right\} } \right].     
\end{align}

\medskip

\begin{lemma}
Let $\alpha \in [1,2)$ and let the convergence of Theorem 4.5 hold. Then there exists an $\alpha-$stable L\'evy process $V$ on $[0,1]$ such that, as $\epsilon \to 0$, the process $V_{\epsilon}$ defined below
\begin{align}
V_{\epsilon}(t) = \sum_{T_i \leq t} s^{\epsilon} ( P_i \boldsymbol{Q}_i ) - t \int_{ \left\{ x : \epsilon < |x| \leq 1 \right\} }  x \mu (dx).   \end{align}
converges uniformly almost surely (along some subsequence) to $V$.
\end{lemma}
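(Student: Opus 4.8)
The plan is to show that $\{V_{\epsilon}\}_{\epsilon>0}$ is uniformly Cauchy almost surely along a geometric sequence $\epsilon_k\downarrow0$, to identify the uniform limit $V$, and to check that it is a L\'evy process of $\alpha$-stable type. I would begin with the structural facts that drive everything. Since $\sum_i\delta_{(T_i,P_i)}$ is a Poisson process on $[0,1]\times(0,\infty]$ with intensity $Leb\times\theta\alpha y^{-\alpha-1}\,\mathrm{d}y$ and the marks $\boldsymbol{Q}_i$ are i.i.d.\ and independent of it, the process $t\mapsto\sum_{T_i\le t}s^{\epsilon}(P_i\boldsymbol{Q}_i)$ is c\`adl\`ag with stationary independent increments, and is an a.s.\ finite sum for every $\epsilon>0$ because, by the summability built into the setup (Condition~\ref{c:mixcond2} and Corollary~\ref{Corollary1} together with its $\alpha\in[1,2)$ counterpart $\mathbf{E}(\sum_j\eta_j^2)^{\alpha/2}<\infty$), only finitely many pairs $(i,j)$ with $T_i\le1$ satisfy $|P_iQ_{ij}|>\epsilon$. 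By Campbell's formula together with the identity relating the marginal tail measure $\mu$ to the cluster intensity (the same one yielding $n b_{1n}^{(u)}\to\int_{\{u<|x|\le1\}}x\,\mu(\mathrm{d}x)$ in the proof of Theorem~\ref{stationary}), the drift $t\int_{\{\epsilon<|x|\le1\}}x\,\mu(\mathrm{d}x)$ is precisely the mean of $\sum_{T_i\le t}s^{\epsilon}(P_i\boldsymbol{Q}_i)$, so $V_{\epsilon}$ is a centered process with independent increments; hence, for $0<\epsilon<\delta\le1$, so is the difference
\[
V_{\epsilon}(t)-V_{\delta}(t)=\sum_{T_i\le t}\sum_j P_iQ_{ij}\,\mathbf{1}_{\{\epsilon<|P_iQ_{ij}|\le\delta\}}-t\int_{\{\epsilon<|x|\le\delta\}}x\,\mu(\mathrm{d}x),
\]
whose jumps are all bounded in modulus by $\delta$.

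The core step is a maximal estimate on this difference. Applying Doob's $L^2$ maximal inequality to the square-integrable martingale $V_{\epsilon}-V_{\delta}$ reduces control of the supremum over $t\in[0,1]$ to control of $V_{\epsilon}(1)-V_{\delta}(1)$, which is infinitely divisible and, by Campbell's second-moment formula, has variance
\[
\mathrm{Var}\big(V_{\epsilon}(1)-V_{\delta}(1)\big)=\int_0^{\infty}\mathbf{E}\Big[\Big(\sum_j yQ_{1j}\,\mathbf{1}_{\{\epsilon<|yQ_{1j}|\le\delta\}}\Big)^{2}\Big]\,\theta\alpha y^{-\alpha-1}\,\mathrm{d}y .
\]
I would bound the inner expectation by splitting each cluster sum into its (normalised) dominant term and the aggregate of the remaining terms: the dominant term is regularly varying and, integrated against $\theta\alpha y^{-\alpha-1}\mathrm{d}y$, produces a contribution of order $\delta^{2-\alpha}$ via Karamata's theorem, exactly as in the Slutsky-type bound used in the proof of Theorem~\ref{t:functconvergence}, while the remainder is handled by the $\ell^2$-type cluster moment condition $\mathbf{E}(\sum_j\eta_j^2)^{\alpha/2}<\infty$. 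The outcome is $\mathrm{Var}(V_{\epsilon}(1)-V_{\delta}(1))\le C\delta^{2-\alpha}$, which is finite since $2-\alpha\in(0,1]$ and vanishes as $\delta\to0$ uniformly in $\epsilon<\delta$. Choosing $\epsilon_k=2^{-k}$ and $\lambda_k=\epsilon_k^{(2-\alpha)/4}$, Chebyshev's inequality followed by the maximal inequality gives $\sum_k\mathbb{P}\big(\sup_{t}|V_{\epsilon_{k+1}}(t)-V_{\epsilon_k}(t)|>2\lambda_k\big)<\infty$; since $\sum_k\lambda_k<\infty$, the Borel--Cantelli lemma yields $\sum_k\sup_{t}|V_{\epsilon_{k+1}}(t)-V_{\epsilon_k}(t)|<\infty$ almost surely, so $(V_{\epsilon_k})_k$ is a.s.\ uniformly Cauchy on $[0,1]$ and converges uniformly, almost surely, to a c\`adl\`ag limit $V$.

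It remains to identify $V$. Stationarity and independence of increments pass to the uniform a.s.\ limit, so $V$ is a L\'evy process, and its marginal law is the weak limit of the infinitely divisible laws of $V_{\epsilon_k}(1)$, whose L\'evy--Khintchine exponents converge---by the computation carried out in Case~1 of the proof, now with the compensator $\int_{\{\epsilon<|x|\le1\}}x\,\mu(\mathrm{d}x)$---to $\int_{\mathbb{R}}\bigl(e^{izx}-1-izx\mathbf{1}_{[-1,1]}(x)\bigr)\nu_1(\mathrm{d}x)+i\gamma_1 z$, which is of $\alpha$-stable form; equivalently, the scaling $(t,y)\mapsto(ct,c^{1/\alpha}y)$ leaves the driving intensity invariant, giving the self-similarity $\{V(ct)\}_t\eqd\{c^{1/\alpha}V(t)\}_t$ (up to the usual logarithmic drift correction when $\alpha=1$) characterising an $\alpha$-stable L\'evy process. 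I expect the main obstacle to be the second-moment bound on the cluster-sum jumps of $V_{\epsilon}-V_{\delta}$: unlike the i.i.d.\ case the jump at $T_i$ is the sum of the medium-sized points of an entire cluster, so the decomposition into dominant point plus small remainder and the attendant cluster moment conditions must be carried out with care, and one must simultaneously verify that $t\int_{\{\epsilon<|x|\le1\}}x\,\mu(\mathrm{d}x)$ is genuinely the compensator, so that the independent-increment (martingale) structure underpinning the maximal inequality is indeed available.
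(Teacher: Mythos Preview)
Your approach is a genuine alternative to the paper's, and the two are worth comparing. You run a direct $L^{2}$ Cauchy argument on $V_{\epsilon}-V_{\delta}$, truncating at the level of \emph{individual} jump sizes $|P_iQ_{ij}|$. The paper instead follows Basrak--Planini\'c and introduces a second threshold on the \emph{cluster weight} $W_i=\sum_j|Q_{ij}|$: one first restricts to the finitely many clusters with $P_iW_i>\delta$, lets $\epsilon\to0$ there trivially to obtain $V_{0,\delta}$, shows $V_{0,\delta}\to V$ uniformly a.s.\ as $\delta\to0$, and finally controls the small-cluster remainder $V_{\epsilon}-V_{\epsilon,\delta}$. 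The gain is structural: in the paper's decomposition the jump of $V_{0,\delta}-V_{0,\delta'}$ at $T_i$ is the \emph{entire} cluster sum, which under the same-sign hypothesis satisfies $|P_i\sum_jQ_{ij}|=P_iW_i\le\delta$, so the variance computation reduces cleanly to $\frac{\theta\alpha}{2-\alpha}(\delta^{2-\alpha}-(\delta')^{2-\alpha})\,\mathbf{E}[W^{\alpha}]$ and only the moment $\mathbf{E}[W^{\alpha}]<\infty$ is needed.

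By contrast, in your decomposition the jump of $V_{\epsilon}-V_{\delta}$ at $T_i$ is $\sum_jP_iQ_{ij}\mathbf{1}_{\{\epsilon<|P_iQ_{ij}|\le\delta\}}$, which is \emph{not} bounded by $\delta$ when a cluster contains many medium-sized points. Expanding the square produces cross terms $\sum_{j\ne k}\mathbb{E}[Q_jQ_k\mathbf{1}_{\{\cdot\}}\mathbf{1}_{\{\cdot\}}]$ whose integral against $\theta\alpha y^{-\alpha-1}\mathrm{d}y$ scales like $\delta^{2-\alpha}$ but with a coefficient of the form $\sum_{j\ne k}\mathbb{E}\big[(|Q_j|\wedge|Q_k|)(|Q_j|\vee|Q_k|)^{\alpha-1}\big]$; pushing the bound through naively even asks for $\mathbf{E}[W^{2}]<\infty$, which is strictly stronger than the standing assumptions and can fail for $\alpha\in[1,2)$. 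You correctly flag this as the main obstacle, but it is not merely a matter of care: without the cluster-weight truncation it is genuinely unclear that the $L^{2}$ route closes under the hypotheses in force. One minor correction: the drift $t\int_{\{\epsilon<|x|\le1\}}x\,\mu(\mathrm{d}x)$ is \emph{not} the mean of $\sum_{T_i\le t}s^{\epsilon}(P_i\boldsymbol{Q}_i)$ (the latter includes jumps larger than $1$); the identity the paper proves first is that this drift equals the mean of the \emph{medium}-jump sum $\sum_{T_i\le t}\sum_jP_iQ_{ij}\mathbf{1}_{\{\epsilon<|P_iQ_{ij}|\le1\}}$, which is what makes $V_{\epsilon}-V_{\delta}$ centered for $\delta\le1$ --- so your martingale step is fine, but for the reason just stated rather than the one you gave.
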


\begin{proof}
More precisely, we show that for all $\epsilon > 0$
\begin{align}
\int_{ \left\{ x : \epsilon < |x| \leq 1 \right\} } x \mu (dx) = \theta \int_0^{\infty} \mathbb{E} \left[ y \sum_{ y \in \mathbb{Z} } Q_j \boldsymbol{1}_{ \left\{ \epsilon < y |Q_j| \leq 1 \right\} } \right] \alpha y^{ - \alpha - 1 } dy.   
\end{align}
Notice that it holds that 
\begin{align}
\theta \mathbb{E} \left[  \sum_{ j \in \mathbb{Z} } Q_j \left| Q_j \right|^{\alpha - 1} \right] = 2p - 1. 
\end{align}
Moreover, by Fubini's theorem, if $\alpha > 1$, we have that 
\begin{align*}
\theta \int_0^{ \infty } \mathbb{E} \left[ y \sum_{j \in \mathbb{Z} } Q_j \boldsymbol{1}_{ \left\{ \epsilon < y |Q_j| \leq 1 \right\} } \right] \alpha y^{- \alpha - 1 } dy
&= \alpha \theta  \mathbb{E} \left[ \sum_{j \in \mathbb{Z} }  Q_j \int_{\epsilon |Q_j|^{-1} }^{ |Q_j|^{-1} }  y^{-\alpha} dy \right]
\\
&= \frac{ \alpha }{  \alpha - 1 } \left( \epsilon^{ - \alpha + 1 } - 1 \right) \theta \mathbb{E} \left[ \sum_{j \in \mathbb{Z} }  Q_j | Q_j |^{ \alpha - 1} \right]
\\
&=
\frac{ \alpha }{  \alpha - 1 } \left( \epsilon^{ - \alpha + 1 } - 1 \right)  (2p - 1). 
\end{align*}

\newpage

Hence, for all $t \in [0,1]$ we have that 
\begin{align}
V_{\epsilon}(t) = \sum_{T_i \leq t} s^{\epsilon} ( P_i \boldsymbol{Q}_i ) - t \theta \int_0^{\infty}  \mathbb{E} \left[ y \sum_{j \in \mathbb{Z} } Q_j \boldsymbol{1}_{ \left\{ \epsilon < y |Q_j| \leq 1 \right\} } \right] \alpha y^{- \alpha - 1 } dy 
\end{align}
Furthermore, notice that we can define with $W = \sum_{j \in \mathbb{Z} } |Q_j |$ and $W_i = \sum_{j \in \mathbb{Z} } | Q_{i,j} |$ such that $\left\{ W_i, i \geq 1 \right\}$ is a sequence of $\textit{i.i.d}$ random variables with the same distribution as $W$.

Therefore, for every $\delta > 0$, there are almost surely at most finitely many points $P_i W_i$ such that it holds that $P_i W_i > \delta$. For every $\delta, \epsilon > 0$, define with 
\begin{align}
m_{\epsilon, \delta} = \theta \int_0^{\infty} \mathbb{E} \left[ y \sum_{j \in \mathbb{Z} } Q_j \boldsymbol{1}_{ \left\{ \epsilon < y |Q_j| \leq 1, \delta < y W \right\} } \right]  \alpha y^{- \alpha - 1 } dy    
\end{align}

\begin{remark}
Notice that by the Theorem 3.1 of \cite{basrak2018invariance}  it holds that the finite dimensional distributions of $V_{0, \delta}$ converge to those of an $\alpha-$stable L\'evy process. Since the topological space $D( [0,1])$ is complete under its metric, we can obtain the existence of the stochastic process $V = \left\{ V(t), t \in [0,1] \right\}$ with continuous paths (trajectories with independent increments), in $D([0,1] )$ almost surely and such that $\mathsf{lim}_{ k \to \infty } \norm{ V_{0, \delta_k }  - V }_{\infty} = 0$  almost surely.  
\end{remark}

Therefore, it only remains to prove that for all $u > 0$, we have that 
\begin{align}
\underset{ \delta \to 0 }{ \mathsf{lim} } \  \underset{ \epsilon \to 0 }{ \mathsf{lim sup} } \ \mathbb{P} \big(  \norm{ V_{\epsilon} - V_{\epsilon, \delta} }_{\infty} > u \big) = 0.    
\end{align}
In particular, proving that the above result holds, then it implies that $\norm{ V_{\epsilon} - V_{\epsilon, \delta} }_{\infty} \to 0$, in probability and hence that, along some subsequence, $V_{\epsilon}$ converges to $V$ unifornmly almost surely.

Moreover, since for $\delta \leq 1$, $y W = \sum_{ j \in \mathbb{Z} } y |Q_j| \leq \delta$ implies that $y |Q_j | \leq \delta \leq 1$ for all $j \in \mathbb{Z}$,  
\begin{align}
V_{\epsilon} (t) - V_{\epsilon, \delta} (t) = \sum_{ T_i \leq t } \sum_{ j \in \mathbb{Z} } Pi Q_{i,j} \boldsymbol{1}_{ \left\{ \epsilon < P_i |Q_j| , P_i W_i \leq \delta \right\} }  - t \theta \int_0^{\infty} \mathbb{E} \left[ y \sum_{j \in \mathbb{Z} Q_j } \boldsymbol{1}_{ \left\{ \epsilon < P_i |Q_j| , P_i W_i \leq \delta \right\} } \right]  
\end{align}

\end{proof}

Therefore, in order to prove that indeed the coefficients that correspond to the characteristic function of the partial sum process converge to the L\'evy process $V$ with characteristic function and corresponding coefficients that are part of the triple, we need to employ the derivations used in Lemma 6.3 of \cite{davis1995point}.

\newpage

Thus, the components of the limiting process $L = \left( L_1, L_2 \right)$ will be expressed as functionals of the limiting process $N = \sum_i \sum_j \delta_{(T_i, P_i \eta_{ij} )}$, and the description of the characteristic triple of $L_1$ and the exponent measure of $L_2$ will be in terms of the measures $\nu_1$ and $\nu_2$ on $\mathbb{R}$ defined by 
\begin{align*}
\nu_1 (dx) 
&= 
\big( c_{+} \mathbf{1}_{ ( 0, + \infty )}(x) + c_{-} \mathbf{1}_{ (- \infty,0)}(x) \big) \theta \alpha |x|^{- \alpha - 1} dx  
\\
\nu_2( dx) 
&= 
r \theta \alpha x^{- \alpha - 1} \mathbf{1}_{ ( 0, + \infty )}(x)  dx
\end{align*}
We use the following notation 
\begin{align}
\textcolor{blue}{x^{ \alpha^{\star} } := x |x|^{\alpha - 1}}  \equiv |x|^{\alpha} \left( \mathbf{1}_{(0,\infty}(x) - \mathbf{1}_{(- \infty,0)}(x) \right)
\end{align}

\medskip

The following parameters were computed by Theorem 3.2 in \cite{davis1995point}. Thus, we define with 
\begin{align}
c := \theta \mathbb{E} \left[ \left| \sum_j \eta_j \right|^{\alpha} \right], \ \ \ \beta 
:= 
\frac{ \mathbb{E} \left[ \left( \sum_j \eta_j \right)^{ \alpha^{\star} } \right] }{ \mathbb{E} \left[ \left| \sum_j \eta_j \right|^{\alpha} \right] }
=      
\frac{ \mathbb{E} \left[ \left( \sum_j \eta_j \right) \left| \sum_j \eta_j \right|^{\alpha - 1} \right]  }{ \mathbb{E} \left[ \left| \sum_j \eta_j \right|^{\alpha} \right] }
\end{align}

\medskip

\begin{align}
\tau = 
\begin{cases}
\displaystyle  ( \alpha - 1 )^{-1} \alpha \theta \mathbb{E} \left[ \left( \sum_j \eta_j \right)^{ \alpha^{\star} } \right], & \alpha > 1, 
\\
\displaystyle  \theta \left( s \mathbb{E} \left[ \sum_j \eta_j \right] - \mathbb{E} \left[ \sum_j \eta_j \mathsf{log}\left( \left| \sum_i \eta_i \eta_j^{-1} \right| \right) \right]      \right), & \alpha = 1,
\end{cases}
\end{align}
with $s = \int_0^{\infty} \left( \mathsf{sin}(x) - x \mathbf{1}_{(0,1]}(x)  \right) x^{-2} dx$. Then, the characteristic triple of the process $V$ is of the form $( 0, \sigma, b )$, where $\sigma(dx) = \left( c_1 \mathbf{1}_{(0,1]}(x)  + c_2 \mathbf{1}_{(- \infty,0)}(x) \right) |x|^{-(\alpha+1)} dx$ and $b = \tau - d$, with 

\begin{align}
c_1 = 
\begin{cases}
\frac{ \displaystyle - c ( 1 + \beta ) }{ \displaystyle 2 \Gamma(- \alpha) \mathsf{cos} ( \pi \alpha / 2 ) }, & \alpha > 1
\\
\\
\frac{ \displaystyle c(1 + \beta )}{ \displaystyle \pi}, & \alpha = 1
\end{cases}
\ \ \ \
\text{and} 
\ \ \ \
c_2 = 
\begin{cases}
\frac{ \displaystyle - c ( 1 - \beta ) }{ \displaystyle 2 \Gamma(- \alpha) \mathsf{cos} ( \pi \alpha / 2 ) }, & \alpha > 1
\\
\\
\frac{ \displaystyle c(1 - \beta )}{ \displaystyle \pi}, & \alpha = 1
\end{cases}
\end{align} 

\medskip

Thus, for a complete proof we shall also show that $b = \gamma$ which will then imply that the L\'evy process $V$ has characteristic triple $( 0, \nu_1, \gamma_1 )$.  We leave this step for the next version of this draft. 

\end{proof}

\newpage

\subsection{Proof of Theorem \ref{t:functSN1}}

\begin{proof}
Repeating the arguments from the proof of Theorem~\ref{t:functconvergence}, but with the summation functional $\widetilde{\Phi}^{(u)} \colon \mathbf{M}_{p}([0,1] \times \mathbb{E}) \to D([0,1], \mathbb{R}^{2})$,
\begin{align}
\widetilde{\Phi}^{(u)} \Big( \sum_{i}\delta_{(t_{i}, x_{i})} \Big) (t)
=  
\Big( \sum_{t_{i} \leq t}x_{i}\, \boldsymbol{1}_{\{u < |x_{i}| < \infty \}},  \sum_{i} x_{i}^{2}\, \boldsymbol{1}_{\{u < |x_{i}| < \infty \}}  \Big), \qquad t \in [0,1],    
\end{align}
instead of $\widetilde{\Phi}^{(u)}$, we obtain, as $n \to \infty$,
\begin{equation}
\label{e:mainconvSN}
\widetilde{L}_{n}(t) 
:= \bigg( \sum_{i = 1}^{\lfloor n t \rfloor} \frac{X_{i}}{a_{n}},  \sum_{i = 1}^{n} \frac{X_{i}^{2}}{a_{n}^{2}} \bigg)
    \dto \widetilde{L} :=  \bigg( \sum_{T_{i} \le t} \sum_{j}P_{i}\eta_{ij}, \sum_{i} \sum_{j}P_{i}^{2}\eta_{ij}^{2} \bigg)
\end{equation}
in $D([0,1], \mathbb{R}^{2})$ with the weak $M_{1}$ topology. Similar to Lemma~\ref{l:M1div} one can show that the function $g \colon D([0,1], \mathbb{R}) \times C^{\uparrow}_{0}([0,1], \mathbb{R}) \to D([0,1], \mathbb{R})$ defined by
\begin{align*}
g(x,y) = \frac{x}{\sqrt{y}}, y > 0 \ \ \ \ \text{and} \ \ \ \ g(x,y) = 0, y \leq 0,
\end{align*}
is continuous
when $D([0,1], \mathbb{R}) \times C^{\uparrow}_{0}([0,1], \mathbb{R})$ is endowed with the weak $M_{1}$ topology and $D([0,1], \mathbb{R})$ is endowed with the standard $M_{1}$ topology. Since
\begin{align}
\mathbb{P} \left(  \widetilde{L} \in D([0,1], \mathbb{R}) \times C^{\uparrow}_{0}([0,1], \mathbb{R}) \right)=1,    
\end{align}
an application of the continuous mapping theorem yields $g(\widetilde{L}_{n}) \dto g(\widetilde{L})$ as $n \to \infty$, that is
\begin{align}
g \left(  \widetilde{L}_{1n}(t),  \widetilde{L}_{2n}(1)   \right) 
:= 
\frac{ \widetilde{L}_{1n}(t)  }{ \sqrt{\widetilde{L}_{2n}(1)} }
\equiv
\frac{ \displaystyle \sum_{i = 1}^{\lfloor n t \rfloor} \frac{X_i }{ a_n}  }{ \displaystyle \sqrt{ \sum_{i = 1}^n  \frac{X^2_i }{ a_n} } }
\dto \frac{ \displaystyle \sum_{T_{i} \le t} \sum_{j}P_{i}\eta_{ij}}{ \displaystyle \sqrt{\sum_{i} \sum_{j}P_{i}^{2}\eta_{ij}^{2}}} = \frac{L_{1}(t)}{\sqrt{L_{2}(1)}} , \ \ \alpha \in (0,1).  
\end{align}
in $D([0,1], \mathbb{R})$ endowed with the $M_{1}$ topology. 
\end{proof}

\subsection{Proof of Theorem \ref{t:functSN2}}

Notice that in the case of $\alpha \in [1,2)$ we still consider the joint functional convergence of $L_n(t) := \big( L_{1n}(t), L_{2n}(t) \big)$. However, a key aspect in this proof is that we employ the convergence of these coordinates to obtain a weak limit result in $D[0,1]$ endowed with the $M_1$ topology for the corresponding self-normalized partial sum process. Therefore, following similar arguments to the proof of Theorem~\ref{t:functconvergence}, but in this case using the summation functional $ \widetilde{\Phi}^{(u)} \colon \mathbf{M}_{p}([0,1] \times \mathbb{E} ) \to D([0,1], \mathbb{R}^{2})$, we obtain that
\begin{align*}
\widetilde{\Phi}^{(u)} \Big( \sum_{i}\delta_{(t_{i}, x_{i})} \Big) (t)
=  
\Big( \sum_{t_{i} \leq t}x_{i}\, \boldsymbol{1}_{\{u < |x_{i}| < \infty \}},  \sum_{i} x_{i}^{2}\, \boldsymbol{1}_{\{u < |x_{i}| < \infty \}}  \Big), \qquad t \in [0,1],
\end{align*}
instead of $\widetilde{\Phi}^{(u)}$, we obtain, as $n \to \infty$,
\begin{align}
\label{e:mainconvSN}
\widetilde{L}_{n}(t)
&:= 
\bigg( \sum_{i = 1}^{\lfloor nt \rfloor} \frac{X_{i}}{a_{n}} - \floor{nt} b_{1n}^{(u)},  \sum_{i = 1}^{n} \frac{X_{i}^{2}}{a_{n}^{2}} - n b_{2n}^{(u)} \bigg)
\nonumber
\\
&\dto 
\widetilde{L} :=  \bigg( \sum_{T_{i} \le t  } \sum_{j} P_{i}\eta_{ij} \mathbf{1}_{ \left\{ P_i | \eta_{ik} | > u \right\} } - t b_1, \sum_{i} \sum_{j}P_{i}^{2}\eta_{ij}^{2} \mathbf{1}_{ \left\{ P^2_i | \eta_{ij}^2 | > u \right\} } - b_2 \bigg)
\end{align}
in $D([0,1], \mathbb{R}^{2})$ with the weak $M_{1}$ topology. Similar to Lemma~\ref{l:M1div} one can show that the function $g \colon D([0,1], \mathbb{R}) \times C^{\uparrow}_{0}([0,1], \mathbb{R}) \to D([0,1], \mathbb{R})$ defined by
\begin{align*}
g(x,y) = \frac{x}{\sqrt{y}}, y > 0 \ \ \ \ \text{and} \ \ \ \ g(x,y) = 0, y \leq 0,
\end{align*}
is continuous when $D([0,1], \mathbb{R}) \times C^{\uparrow}_{0}([0,1], \mathbb{R})$ is endowed with the weak $M_{1}$ topology and $D([0,1], \mathbb{R})$ is endowed with the standard $M_{1}$ topology. Since
\begin{align}
\mathbb{P} \left(\widetilde{L} \in D([0,1], \mathbb{R}) \times C^{\uparrow}_{0}([0,1], \mathbb{R}) \right) =1,
\end{align}
an application of the continuous mapping theorem yields $g(\widetilde{L}_{n}) \dto g(\widetilde{L})$ as $n \to \infty$, that is
\begin{align*}
g \left(  \widetilde{L}_{1n}(t),  \widetilde{L}_{2n}(1)   \right) 
:= 
\frac{ \widetilde{L}_{1n}(t)  }{ \sqrt{\widetilde{L}_{2n}(1)} }
&\equiv 
\frac{ \displaystyle  \sum_{i = 1}^{\lfloor nt \rfloor} \frac{X_{i}}{a_{n}} - \floor{nt} b_{1n}^{(u)} }{ \displaystyle   \sqrt{ \sum_{i = 1}^{n} \frac{X_{i}^{2}}{a_{n}^{2}} - n b_{2n}^{(u)} } }
= 
\frac{ \displaystyle  \sum_{i = 1}^{\lfloor nt \rfloor} \frac{X_{i}}{a_{n}} - \floor{nt} \mathbb{E} \left[ \frac{X_1}{ a_n } \mathbf{1}_{ \left\{ \textcolor{red}{u < } \frac{ | X_1 | }{ a_n } \leq 1 \right\} } \right] }{ \displaystyle  \sqrt{  \sum_{i = 1}^{n} \frac{X_{i}^{2}}{a_{n}^{2}} - n \mathbb{E} \left[ \frac{X^2_1}{ a^2_n } \mathbf{1}_{ \left\{ u < \frac{ X_1^2 }{ a_n } \leq 1  \right\} } \right]  } }, \ \  \alpha \in [1,2)
\\
&\dto 
\frac{ \displaystyle \underset{ u \to 0 }{ \mathsf{lim} } \left\{ \sum_{T_{i} \le t} \sum_{j}P_{i}\eta_{ij} \mathbf{1}_{ \left\{ P_i | \eta_{ik} | > u \right\} } - t \int_{ u < |x| \leq 1} x \mu (dx)  \right\} }{ \displaystyle \sqrt{\sum_{T_i \leq 1 } \sum_{j}P_{i}^{2}\eta_{ij}^{2} \mathbf{1}_{ \left\{ P^2_i | \eta_{ij}^2 | > u \right\} } - \int_{ u < x^2 \leq 1} x^2 \mu (dx)  } } 
= \frac{ \widetilde{L}_{1} (\,t\,)}{\sqrt{ \widetilde{L}_{2}(1)}}, \ \text{as} \ n \to \infty.   
\end{align*}
in $D([0,1], \mathbb{R})$ endowed with the $M_{1}$ topology. 

\medskip

The last equality follows by the joint weak convergence of the functionals $\widetilde{L}_{1n} (\,\cdot\,) \dto \widetilde{L}_1 (\,\cdot\,)$ and $\widetilde{L}_{2n} (\,\cdot\,) \dto \widetilde{L}_2 (\,\cdot\,)$ and the fact that $g(\widetilde{L}_{n}) \dto g(\widetilde{L})$ as $n \to \infty$. In particular, note that the limit both on the numerator as well as in the denominator that correspond to the case in which $\alpha \in [1,2)$ hold almost surely uniformly in $[0,1]$. Moreover, in the case of $\alpha = 1$, additionally we assume that Conditions \eqref{cond2} and \eqref{cond3}. These two conditions although challenging to prove in the case of dependent data, ensure that our main results hold for more general dependence structures as in the case of point processes. A special type of dependence one can consider is the case of $\rho-$mixing which those two conditions can be easily established.

\end{appendix}

\newpage 
   
\bibliographystyle{apalike}
\bibliography{myreferences1}

\begin{thebibliography}{}

\bibitem[Andersen and Dobric, 1987]{andersen1987central}
Andersen, N.~T. and Dobric, V. (1987).
\newblock The central limit theorem for stochastic processes.
\newblock {\em The Annals of Probability}, 15(1):164--177.

\bibitem[Arcones, 1996]{arcones1996some}
Arcones, M.~A. (1996).
\newblock Some remarks on the uniform weak convergence of stochastic processes.
\newblock {\em Statistics \& probability letters}, 28(1):41--49.

\bibitem[Bartkiewicz et~al., 2011]{bartkiewicz2011stable}
Bartkiewicz, K., Jakubowski, A., Mikosch, T., and Wintenberger, O. (2011).
\newblock Stable limits for sums of dependent infinite variance random
  variables.
\newblock {\em Probability theory and related fields}, 150(3):337--372.

\bibitem[Basrak and Krizmani{\'c}, 2015]{basrak2015multivariate}
Basrak, B. and Krizmani{\'c}, D. (2015).
\newblock A multivariate functional limit theorem in weak m 1 topology.
\newblock {\em Journal of Theoretical Probability}, 28(1):119--136.

\bibitem[Basrak et~al., 2010]{basrak2010functional}
Basrak, B., Krizmani{\'c}, D., Segers, J., et~al. (2010).
\newblock A functional limit theorem for partial sums of dependent random
  variables with infinite variance.
\newblock {\em Annals of Probability}, 1295(10.1214).

\bibitem[Basrak et~al., 2018]{basrak2018invariance}
Basrak, B., Planinic, H., and Soulier, P. (2018).
\newblock An invariance principle for sums and record times of regularly
  varying stationary sequences.
\newblock {\em Probability Theory and Related Fields}, 172(3):869--914.

\bibitem[Basrak and Segers, 2009]{basrak2009regularly}
Basrak, B. and Segers, J. (2009).
\newblock Regularly varying multivariate time series.
\newblock {\em Stochastic processes and their applications}, 119(4):1055--1080.

\bibitem[Basrak and Tafro, 2016]{basrak2016complete}
Basrak, B. and Tafro, A. (2016).
\newblock A complete convergence theorem for stationary regularly varying
  multivariate time series.
\newblock {\em Extremes}, 19(3):549--560.

\bibitem[Bentkus and Gotze, 1996]{bentkus1996berry}
Bentkus, V. and Gotze, F. (1996).
\newblock The berry-esseen bound for student's statistic.
\newblock {\em The Annals of Probability}, 24(1):491--503.

\bibitem[Bollerslev, 1986]{bollerslev1986generalized}
Bollerslev, T. (1986).
\newblock Generalized autoregressive conditional heteroskedasticity.
\newblock {\em Journal of econometrics}, 31(3):307--327.

\bibitem[Bucher et~al., 2014]{bucher2014uniform}
Bucher, A., Segers, J., and Volgushev, S. (2014).
\newblock When uniform weak convergence fails: Empirical processes for
  dependence functions and residuals via epi-and hypographs.
\newblock {\em The Annals of Statistics}, 42(4):1598--1634.

\bibitem[Buchmann and Chan, 2007]{buchmann2007asymptotic}
Buchmann, B. and Chan, N.~H. (2007).
\newblock Asymptotic theory of least squares estimators for nearly unstable
  processes under strong dependence.
\newblock {\em The Annals of Statistics}, 35(5):2001--2017.

\bibitem[Chan and Zhang, 2013]{chan2013marked}
Chan, N.~H. and Zhang, R. (2013).
\newblock Marked empirical processes for non-stationary time series.
\newblock {\em Bernoulli}, 19(5A):2098--2119.

\bibitem[Csorgo et~al., 2003]{csorgo2003donsker}
Csorgo, M., Szyszkowicz, B., and Wu, Q. (2003).
\newblock Donskers theorem for self-normalized partial sums processes.
\newblock {\em The Annals of Probability}, 31(3):1228--1240.

\bibitem[Davis and Hsing, 1995]{davis1995point}
Davis, R.~A. and Hsing, T. (1995).
\newblock Point process and partial sum convergence for weakly dependent random
  variables with infinite variance.
\newblock {\em The Annals of Probability}, 23(2):879--917.

\bibitem[Denker and Jakubowski, 1989]{denker1989stable}
Denker, M. and Jakubowski, A. (1989).
\newblock Stable limit distributions for strongly mixing sequences.
\newblock {\em Statistics \& probability letters}, 8(5):477--483.

\bibitem[Dombry et~al., 2018]{dombry2018tail}
Dombry, C., Hashorva, E., and Soulier, P. (2018).
\newblock Tail measure and spectral tail process of regularly varying time
  series.
\newblock {\em The Annals of Applied Probability}, 28(6):3884--3921.

\bibitem[Donsker, 1951]{donsker1951invariance}
Donsker, M.~D. (1951).
\newblock {\em An invariance principle for certain probability limit theorems}.

\bibitem[Drees and Rootzen, 2010]{drees2010limit}
Drees, H. and Rootzen, H. (2010).
\newblock Limit theorems for empirical processes of cluster functionals.
\newblock {\em The Annals of Statistics}, 38(4):2145--2186.

\bibitem[Engelke et~al., 2022]{engelke2022graphical}
Engelke, S., Ivanovs, J., and Strokorb, K. (2022).
\newblock Graphical models for infinite measures with applications to extremes
  and levy processes.
\newblock {\em arXiv preprint arXiv:2211.15769}.

\bibitem[Erdos and Kac, 1946]{erdos1946certain}
Erdos, P. and Kac, M. (1946).
\newblock On certain limit theorems of the theory of probability.
\newblock {\em Bulletin of the American Mathematical Society}, 52(4):292--302.

\bibitem[Fan et~al., 2019]{fan2019self}
Fan, X., Grama, I., Liu, Q., and Shao, Q.-M. (2019).
\newblock Self-normalized cramer type moderate deviations for martingales.
\newblock {\em Bernoulli}, 25(4A):2793--2823.

\bibitem[Feller, 1971]{feller1971introduction}
Feller, W. (1971).
\newblock {\em An introduction to probability theory and its applications,
  Volume 1}, volume~81.
\newblock John Wiley \& Sons.

\bibitem[Ferreira and Ferreira, 2013]{ferreira2013extremes}
Ferreira, M. and Ferreira, H. (2013).
\newblock Extremes of multivariate armax processes.
\newblock {\em Test}, 22(4):606--627.

\bibitem[Gao et~al., 2022]{gao2022refined}
Gao, L., Shao, Q.-M., and Shi, J. (2022).
\newblock Refined cramer-type moderate deviation theorems for general
  self-normalized sums with applications to dependent random variables and
  winsorized mean.
\newblock {\em The Annals of Statistics}, 50(2):673--697.

\bibitem[Hoffmann-Jorgensen, 1991]{hoffmann1991stochastic}
Hoffmann-Jorgensen, J. (1991).
\newblock Stochastic processes on polish spaces. arhus universitet.
\newblock {\em Matematisk Institut, arhus}.

\bibitem[Jacod and Protter, 2011]{jacod2011discretization}
Jacod, J. and Protter, P. (2011).
\newblock {\em Discretization of processes}, volume~67.
\newblock Springer Science \& Business Media.

\bibitem[Ken-Iti, 1999]{ken1999levy}
Ken-Iti, S. (1999).
\newblock {\em Levy processes and infinitely divisible distributions}.
\newblock Cambridge university press.

\bibitem[Krizmani{\'c}, 2014]{krizmanic2014weak}
Krizmani{\'c}, D. (2014).
\newblock Weak convergence of partial maxima processes in the m 1 topology.
\newblock {\em Extremes}, 17:447--465.

\bibitem[Krizmani{\'c}, 2016]{krizmanic2016functional}
Krizmani{\'c}, D. (2016).
\newblock Functional weak convergence of partial maxima processes.
\newblock {\em Extremes}, 19(1):7--23.

\bibitem[Krizmani{\'c}, 2017]{krizmanic2017weak}
Krizmani{\'c}, D. (2017).
\newblock Weak convergence of multivariate partial maxima processes.
\newblock {\em Journal of multivariate analysis}, 155:1--11.

\bibitem[Krizmani{\'c}, 2018]{krizmanic2018joint}
Krizmani{\'c}, D. (2018).
\newblock Joint functional convergence of partial sums and maxima for linear
  processes.
\newblock {\em Lithuanian Mathematical Journal}, 58(4):457--479.

\bibitem[Krizmani{\'c}, 2020]{krizmanic2020joint}
Krizmani{\'c}, D. (2020).
\newblock On joint weak convergence of partial sum and maxima processes.
\newblock {\em Stochastics}, 92(6):876--899.

\bibitem[Krizmani{\'c}, 2022]{krizmanic2022functional}
Krizmani{\'c}, D. (2022).
\newblock A functional limit theorem for moving averages with weakly dependent
  heavy-tailed innovations.
\newblock {\em Brazilian Journal of Probability and Statistics},
  36(1):138--156.

\bibitem[Kuelbs, 1968]{kuelbs1968invariance}
Kuelbs, J. (1968).
\newblock The invariance principle for a lattice of random variables.
\newblock {\em The Annals of Mathematical Statistics}, 39(2):382--389.

\bibitem[Kulik and Soulier, 2020]{kulik2020heavy}
Kulik, R. and Soulier, P. (2020).
\newblock {\em Heavy-tailed time series}.
\newblock Springer.

\bibitem[Kulperger and Yu, 2005]{kulperger2005high}
Kulperger, R. and Yu, H. (2005).
\newblock High moment partial sum processes of residuals in garch models and
  their applications.
\newblock {\em The Annals of Statistics}, 33(5):2395--2422.

\bibitem[Ledger, 2016]{ledger2016skorokhod}
Ledger, S. (2016).
\newblock Skorokhods m1 topology for distribution-valued processes.
\newblock {\em Electronic Communications in Probability}, 21:1--11.

\bibitem[Matsui et~al., 2023]{matsui2023self}
Matsui, M., Mikosch, T., and Wintenberger, O. (2023).
\newblock Self-normalized partial sums of heavy-tailed time series.
\newblock {\em arXiv preprint arXiv:2303.17221}.

\bibitem[Mikosch and Straumann, 2002]{mikosch2002whittle}
Mikosch, T. and Straumann, D. (2002).
\newblock Whittle estimation in a heavy-tailed garch (1, 1) model.
\newblock {\em Stochastic processes and their applications}, 100(1-2):187--222.

\bibitem[Mikosch and Wintenberger, 2014]{mikosch2014cluster}
Mikosch, T. and Wintenberger, O. (2014).
\newblock The cluster index of regularly varying sequences with applications to
  limit theory for functions of multivariate markov chains.
\newblock {\em Probability Theory and Related Fields}, 159(1):157--196.

\bibitem[Peterson, 2011]{peterson2011convergence}
Peterson, L.~D. (2011).
\newblock Convergence in distribution of point processes on polish spaces to a
  simple limit.
\newblock {\em Statistics \& probability letters}, 81(12):1859--1861.

\bibitem[Prokhorov, 1956]{prokhorov1956convergence}
Prokhorov, Y.~V. (1956).
\newblock Convergence of random processes and limit theorems in probability
  theory.
\newblock {\em Theory of Probability \& Its Applications}, 1(2):157--214.

\bibitem[Resnick, 2007]{resnick2007heavy}
Resnick, S.~I. (2007).
\newblock {\em Heavy-tail phenomena: probabilistic and statistical modeling}.
\newblock Springer Science \& Business Media.

\bibitem[Resnick, 2008]{resnick2008extreme}
Resnick, S.~I. (2008).
\newblock {\em Extreme values, regular variation, and point processes},
  volume~4.
\newblock Springer Science \& Business Media.

\bibitem[Shao, 1997]{shao1997self}
Shao, Q.-M. (1997).
\newblock Self-normalized large deviations.
\newblock {\em The Annals of Probability}, 25(1):285--328.

\bibitem[Shao, 1999]{shao1999cramer}
Shao, Q.-M. (1999).
\newblock A cramer type large deviation result for student's t-statistic.
\newblock {\em Journal of Theoretical Probability}, 12(2):385--398.

\bibitem[Skorokhod, 1956]{skorokhod1956limit}
Skorokhod, A.~V. (1956).
\newblock Limit theorems for stochastic processes.
\newblock {\em Theory of Probability \& Its Applications}, 1(3):261--290.

\bibitem[Upadhye and Barman, 2022]{upadhye2022unified}
Upadhye, N. and Barman, K. (2022).
\newblock A unified approach to steins method for stable distributions.
\newblock {\em Probability Surveys}, 19:533--589.

\bibitem[Whitt, 2002]{Whitt2002stochastic}
Whitt, W. (2002).
\newblock {\em Stochastic-process limits: an introduction to stochastic-process
  limits and their application to queues}.
\newblock Springer-Verlag LLC.

\end{thebibliography}

\end{document}